\def\mr{\mathrm}
\def\ind{\mathbbm{1}}
\def\m{\mathcal}
\def\mb{\mathbb}
\def\wt{\widetilde}
\def\T{{\mathrm{\scriptscriptstyle T} }}
\def\m{\mathcal}
\def\mc{\mathcal}
\def\mb{\mathbb}
\def\mr{\mathrm}
\newcommand{\Var}{\mathop{\rm var}}
\newcommand \bbP{\mathbb{P}}
\newcommand \bbE{\mathbb{E}}
\def\ind{\mathbbm{1}}
\def\T{{ \mathrm{\scriptscriptstyle T} }}
\def\mr{\mathrm}
\def\ind{\mathbbm{1}}
\newcommand{\be}{\begin{equs}}
\newcommand{\ee}{\end{equs}}
\numberwithin{equation}{section}
\theoremstyle{plain}
\newtheorem{theorem}{Theorem}
\newtheorem{assumption}{Assumption}
\newtheorem{remark}{Remark}
\newtheorem{lemma}{Lemma}
\newtheorem{proposition}[theorem]{Proposition}
\title{Mass-shifting phenomenon of truncated multivariate normal priors}
\author[1]{Shuang Zhou\thanks{shuang@stat.tamu.edu}}
\author[1]{Pallavi Ray\thanks{pallaviray@stat.tamu.edu}}
\author[1]{Debdeep Pati\thanks{debdeep@stat.tamu.edu}}
\author[1]{Anirban Bhattacharya\thanks{anirbanb@stat.tamu.edu}}
\affil[1]{Department of Statistics,  Texas A\&M University, College Station, Texas, 77843, USA}
\begin{document}
\maketitle

\begin{abstract}
We show that lower-dimensional marginal densities of dependent zero-mean normal distributions truncated to the positive orthant exhibit a {\em mass-shifting} phenomenon. Despite the truncated multivariate normal density having a mode at the origin, the marginal density assigns increasingly small mass near the origin as the dimension increases.  The phenomenon accentuates with stronger correlation between the random variables. A precise quantification characterizing the role of the dimension as well as the dependence is provided.
This surprising behavior has serious implications towards Bayesian constrained estimation and inference, where the prior, in addition to having a full support, is required to assign a substantial probability near the origin to capture flat parts of the true function of interest. 
Without further modification, we show that truncated normal priors are not suitable for modeling flat regions and propose a novel alternative strategy based on shrinking the coordinates using a multiplicative scale parameter. The proposed shrinkage prior is empirically shown to guard against the mass shifting phenomenon while retaining computational efficiency.   

\end{abstract}

\section{Introduction}
Let $p(\cdot)$ denote the density of a $\mc N_N(\mathbf{0}, \Sigma)$ distribution truncated to the non-negative orthant in $\mb R^N$,  
\begin{align}\label{eq:basic_def}
p(\theta) \propto e^{ -\theta^\T \Sigma^{-1} \theta/2} \, \ind_{\mc C}(\theta), \quad \mc C = [0, \infty)^N :\, = \big\{ \theta \in \mb R^N ~:~ \theta_1 \ge 0, \ldots, \theta_N \ge 0 \big \}. 
\end{align}
The density $p$ is clearly unimodal with its mode at the origin. However, for certain classes of non-diagonal $\Sigma$, we surprisingly observe that the lower-dimensional marginal distributions increasingly shift mass away from the origin as $N$ increases. This observation is quantified in Theorem \ref{thm:main}, where we provide non-asymptotic estimates for marginal probabilities of events of the form $\{\theta_1 \le \delta\}$, for $\delta > 0$. En-route to the proof, we derive a novel Gaussian comparison inequality in Lemma \ref{slepian_gl_main}. An immediate implication of this mass-shifting phenomenon is that corner regions of the support $\mc C$, where a subset of the coordinates take values close to zero, increasingly become low-probability regions under $p(\cdot)$ as dimension increases. From a statistical perspective, this helps explain a paradoxical behavior in Bayesian constrained regression empirically observed in \cite{neelon2004bayesian} and \cite{curtis2009variable}, where truncated normal priors led to biased posterior inference when the underlying function had flat regions. 

A common approach towards Bayesian constrained regression expands the function in a flexible basis which facilitates representation of the functional constraints in terms of simple constraints on the coefficient space, and then specifies a prior distribution on the coefficients obeying the said constraints. In this context, the multivariate normal distribution subject to linear constraints arises as a natural conjugate prior in Gaussian models and beyond. Various basis, 
such as Bernstein polynomials \citep{curtis2009variable}, regression splines \citep{cai2007bayesian,meyer2011bayesian}, penalized spines \citep{brezger2008monotonic}, cumulative distribution functions \citep{bornkamp2009bayesian}, restricted splines \citep{shively2011nonparametric}, and compactly supported basis \citep{maatouk2017gaussian} have been employed in the literature. For numerical illustrations in this article, we shall use the formulation of \cite{maatouk2017gaussian} where various restrictions such as boundedness, monotonicity, convexity, etc were equivalently translated into non-negativity constraints on the coefficients under an appropriate basis expansion. They used a truncated normal prior as in \eqref{eq:basic_def} on the coefficients, with $\Sigma$ induced from a parent Gaussian process on the regression function; see Appendix \ref{sec:app_posterior} for more details. 

\begin{figure}[h!]
\centering
	\begin{subfigure}{.5\textwidth}
		\centering
		\includegraphics[width=0.9\textwidth]{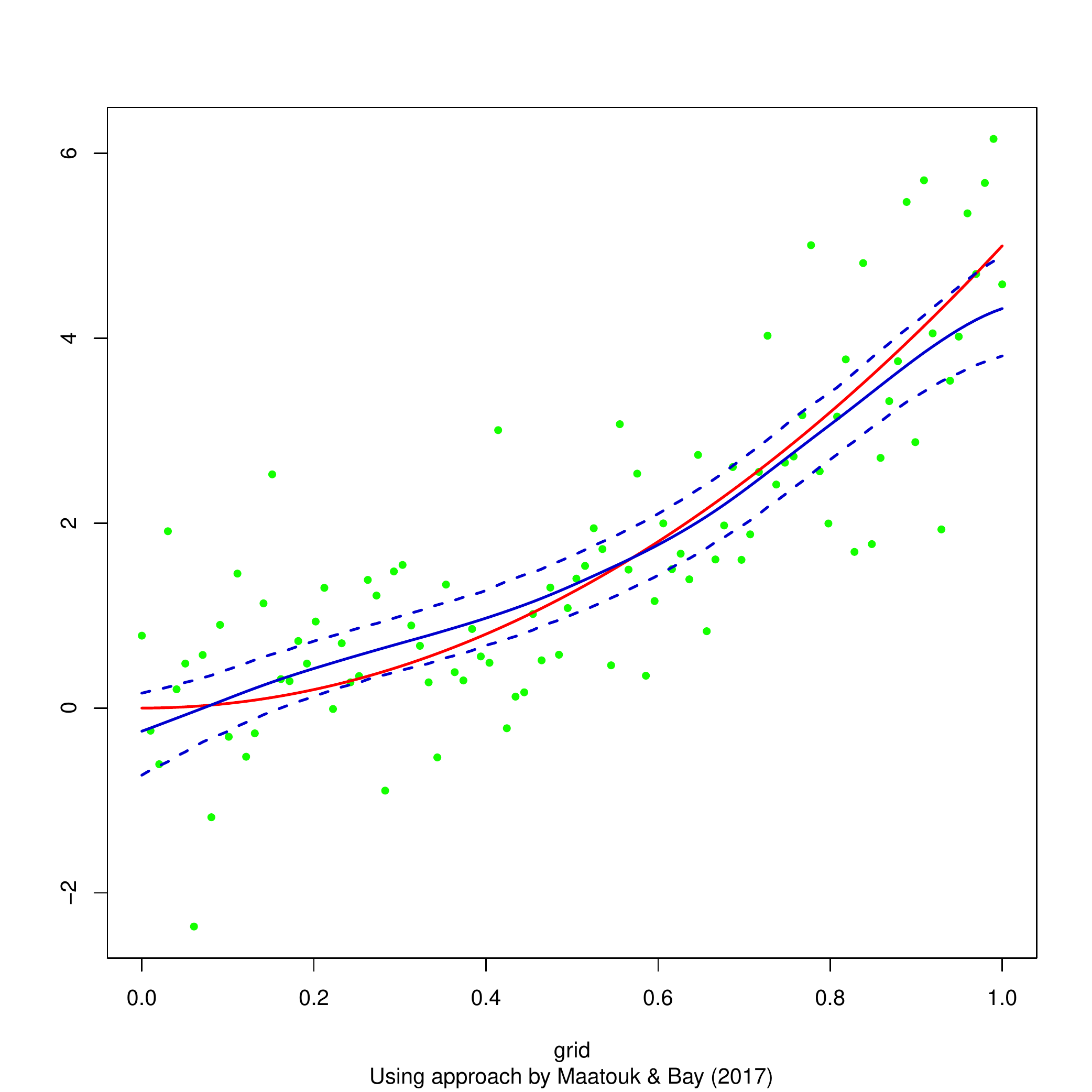}
	\end{subfigure}%
	\begin{subfigure}{.5\textwidth}
		\centering
		\includegraphics[width=0.9\textwidth]{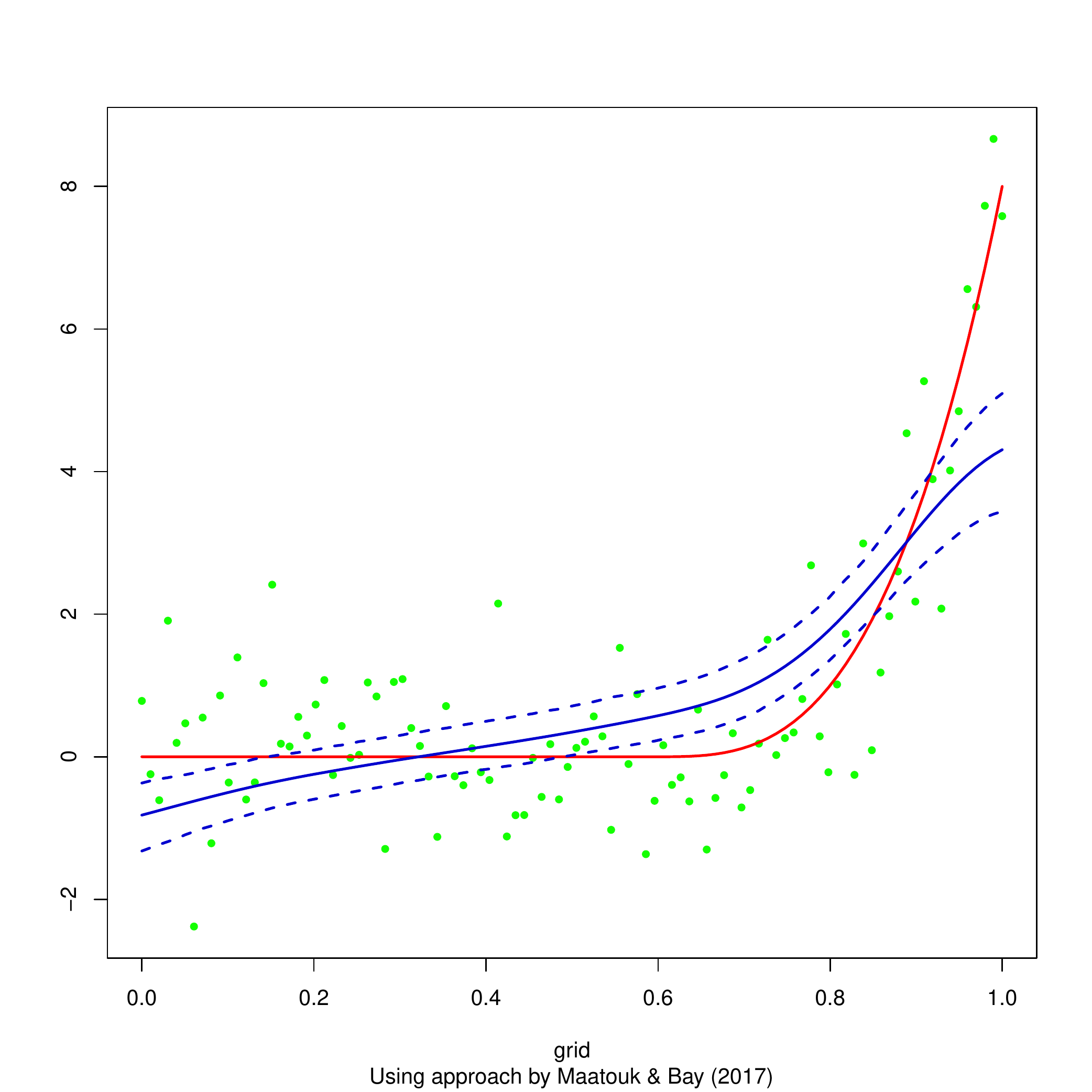}
	\end{subfigure}
	\caption{{\em Monotone function estimation using the basis of \cite{maatouk2017gaussian} and a joint truncated normal prior $p(\cdot)$ on the coefficients. Red solid curve corresponds to the true function, blue solid curve is the posterior mean, the region within two dotted blue curves represent a pointwise 95\% credible interval, and the green dots are observed data points. Left panel: true function is strictly monotone. Right panel: true function is monotone with a near-flat region.}}
	\label{fig-cgp}
\end{figure}
To motivate our theoretical investigations, the two panels in Figure \ref{fig-cgp} depict the estimation of two different monotone smooth functions on $[0, 1]$ based on 100 samples using the basis of \cite{maatouk2017gaussian} and a joint prior $p(\cdot)$ as in \eqref{eq:basic_def} on the $N = 50$ dimensional basis coefficients. The same prior scale matrix $\Sigma$ was employed across the two settings; the specifics are deferred to Section 3 and Appendix \ref{sec:app_posterior}. Observe that the function in the left panel is strictly monotone, while the one on the right panel is relatively flat over a region. While the point estimate (posterior mean) as well as the credible intervals look reasonable for the function in the left panel, the situation is significantly worse for the function in the right panel. The posterior mean incurs a large bias, and the pointwise 95$\%$ credible intervals fail to capture the true function for a substantial part of the input domain, suggesting that the entire posterior distribution is biased away from the truth. This behavior is perplexing; we are fitting a well-specified model with a prior that has full support\footnote{the prior probability assigned to arbitrarily small Kullback--Leibler neighborhoods of any point is positive.} on the parameter space, which under mild conditions implies good first-order asymptotic properties \citep{ghosal2000} such as posterior consistency. However, the finite sample behavior of the posterior under the second scenario clearly suggests otherwise. 

Functions with flat regions as in the right panel of Figure \ref{fig-cgp} routinely appear in many applications; for example, dose-response curves are assumed to be non-decreasing with the possibility that the dose-response relationship is flat over certain regions \citep{neelon2004bayesian}. A similar biased behavior of the posterior for such functions under truncated normal priors was observed by \cite{neelon2004bayesian} while using a piecewise linear model, and also by \cite{curtis2009variable} under a Bernstein polynomial basis. However, a clear explanation behind such behavior as well as the extent to which it is prevalent has been missing in the literature, and the mass-shifting phenomenon alluded before offers a clarification. Under the basis of \cite{maatouk2017gaussian}, a subset of the basis coefficients are required to shrink close to zero to accurately approximate functions with such flat regions. However, the truncated normal posterior pushes mass away from such corner regions, leading to the bias.  Importantly, our theory also suggests that the problem would not disappear and would rather get accentuated in the large sample scenario if one follows standard practice of scaling up the number of basis functions with increasing sample size, since the mass-shifting gets more pronounced with increasing dimension. To illustrate this point, Figure \ref{fig-cgp1} shows the estimation of the same function in the right panel of Figure \ref{fig-cgp}, now based on 500 samples and $N = 50$ and $N = 250$ basis functions in the left and right panel respectively. Increasing the number of basis functions indeed results in a noticeable increase in the bias as clearly seen from the insets which zoom into two disjoint regions of the covariate domain. A similar story holds for the basis of \cite{neelon2004bayesian} and \cite{curtis2009variable}. 

\begin{figure}[h!]
\centering
	\begin{subfigure}{.5\textwidth}
		\centering
		\includegraphics[width=\textwidth]{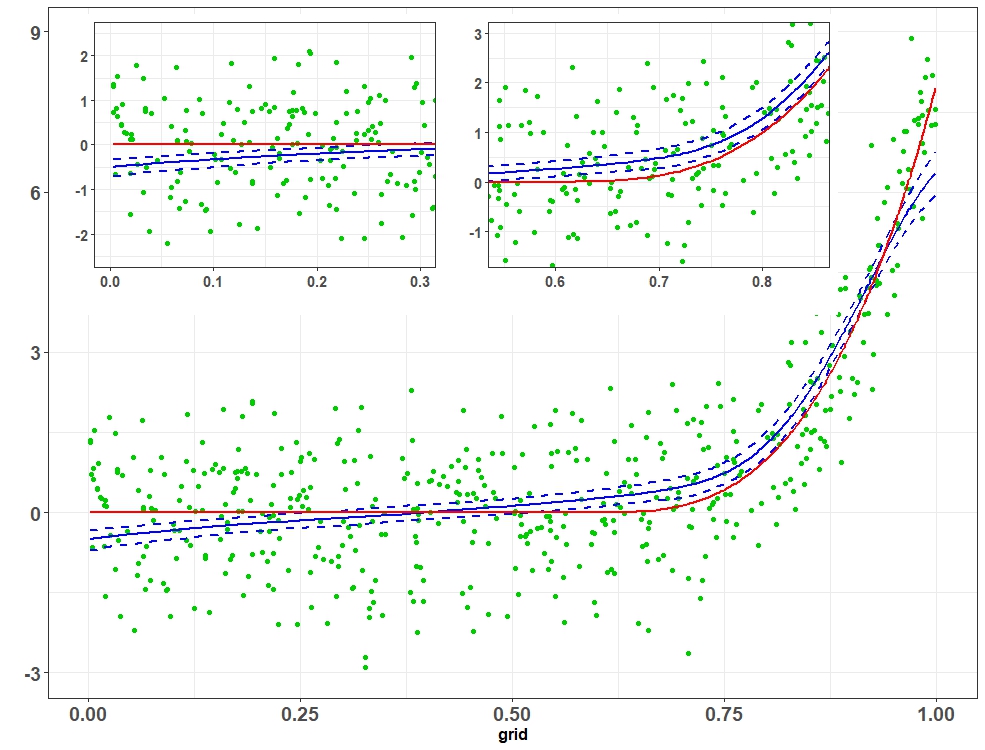}
	\end{subfigure}%
	\begin{subfigure}{.5\textwidth}
		\centering
		\includegraphics[width=\textwidth]{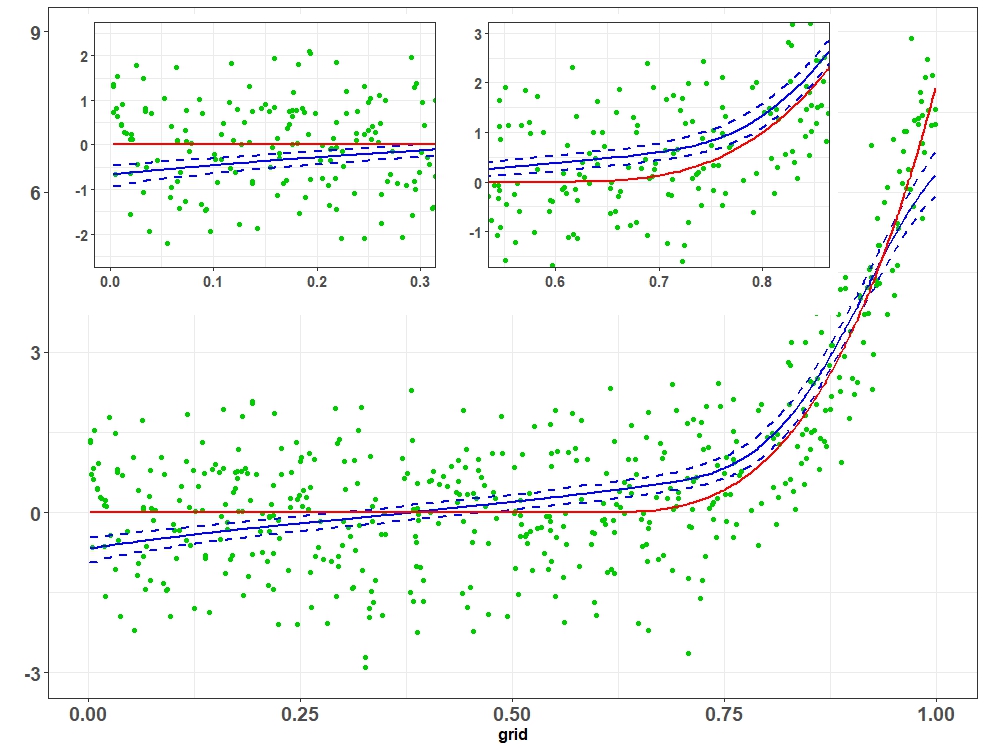}
	\end{subfigure}
	\caption{{\em Monotone function estimation using the basis of \cite{maatouk2017gaussian} and a joint truncated normal prior on the coefficients. Red solid curve corresponds to the true function, blue solid curve is the posterior mean, the region within two dotted blue curves represent a pointwise 95\% credible interval, and the green dots are observed data points corresponding to $N = 50$ (left panel) and $N = 250$ (right panel).}}
	\label{fig-cgp1}
\end{figure}

\cite{neelon2004bayesian} and \cite{curtis2009variable} both used point-mass mixture priors as remedy, which is a natural choice under a non-decreasing constraint. However, their introduction becomes somewhat cumbersome under the non-negativity constraint in \eqref{eq:basic_def}. As a simple remedy, we suggest introducing a multiplicative scale parameter for each coordinate {\em a priori} and further equipping it with a prior mixing distribution which has positive density at the origin and heavy tails; a default candidate is the half-Cauchy density \citep{carvalho2010horseshoe}. The resulting prior shrinks much more aggressively towards the origin, and we empirically illustrate its superior performance over the truncated normal prior. This empirical exercise provides further support to our argument.

\section{Mass-shifting phenomenon of truncated normal distributions} \label{sec:theory}

\subsection{Marginal densities of truncated normal distributions} 
Our main focus is on studying the properties of marginal densities of truncated normal distributions described in equation \eqref{eq:basic_def} and quantifying how they behave with increasing dimensions. We begin by introducing some notation. 
We use $\m N(\gamma, \Omega)$ to denote the $d$-dimensional normal distribution with mean $\gamma \in \mb R^d$ and positive definite covariance matrix $\Omega$; also let $\m N(x; \gamma, \Omega)$ denote its density evaluated at $x \in \mb R^d$.  We reserve the notation $\Sigma_d(\rho)$ to denote the $d \times d$ compound-symmetry correlation matrix with diagonal elements equal to $1$ and off-diagonal elements equal to $\rho \in (0,1)$,
\begin{align}\label{eq:comp_symm}
\Sigma_d(\rho) = (1 - \rho) \mr I_d + \rho \mathbf{1}_d \mathbf{1}_d^\T,
\end{align} 
with $\mathbf{1}_d$ the vector of ones in $\mb R^d$ and $\mr I_d$ the $d \times d$ identity matrix.

For a subset $\m C \subset \mb R^N$ with positive Lebesgue measure, let $\m N_{\m C}(\gamma, \Omega)$ denote a $\m N(\gamma, \Omega)$ distribution truncated onto $\m C$, with density 
\begin{align}\label{eq:tmvn_den}
\widetilde{p}(\theta) = m_{\m C}^{-1} \, \m N_N(\theta; \gamma, \Omega) \, \ind_{\m C}(\theta), 
\end{align}
where $m_{\m C} = \bbP (X \in \m C)$ for $X \sim \m N(\gamma, \Omega)$ is the constant of integration and $\ind_{\m C}(\cdot)$ the indicator function of the set $\m C$.  We throughout assume $\m C$ to be the positive orthant of $\mb R^N$ as in equation \eqref{eq:basic_def}, namely, $\m C = [0, \infty)^N$; a general $\m C$ defined by linear inequality constraints can be reduced to rectangular constraints using a linear transformation - see, for example, \S\,2 of \cite{botev2017normal}. The dimension $N$ will be typically evident from the context.


Our investigations were originally motivated by the following observation. Consider $\theta \sim \m N_{\m C}(\mathbf{0}, \Sigma_2(\rho))$ for $\rho \in (0, 1)$. Then, the marginal distribution of $\theta_1$ has density proportional to $e^{-\theta_1^2/2}\, \Phi\{\rho \theta_1/(1-\rho^2)^{1/2}\}$ on $(0, \infty)$, where $\Phi$ denotes the $\m N(0, 1)$ cumulative distribution function. This distribution is readily recognized as a skew normal density \citep{azzalini1996multivariate} truncated to $(0, \infty)$. Interestingly, the marginal of $\theta_1$ has a strictly positive mode, while the joint distribution of $\theta$ had its mode at $\mathbf{0}$. \cite{cartinhour1990one} noted that the truncated normal family is not closed under marginalization for non-diagonal $\Sigma$, and derived a general formula for the univariate marginal as the product of a univariate normal density with a skewing factor. In Proposition \ref{PROP:MARG_K} below, we generalize Cartinhour's result for any lower-dimensional marginal density. We write the scale matrix $\Sigma_N$ in block form as $\Sigma_N = [\Sigma_{k,k},\, \Sigma_{N-k,k}; \Sigma_{k,N-k},\, \Sigma_{N-k,N-k}]$. 
\begin{proposition}\label{PROP:MARG_K}
Suppose $\theta \sim \m N_\m C(\mathbf{0}_N, \Sigma_N)$. The marginal density $\widetilde{p}_{k, N}$ of $\theta^{(k)} = (\theta_1, \ldots, \theta_k)^\T$ is
\begin{align*}
\widetilde{p}_{k, N}(\theta_1, \ldots, \theta_k) 
=& (2\pi)^{-k/2}\, m_{\m C}^{-1} \, e^{-\frac{1}{2} {\theta^{(k)}}^\T \Sigma_{k,k}^{-1} \theta^{(k)}} \,  \\
\times \,& \bbP(\widetilde{X}_{N-k} \le \Sigma_{N-k,k} \,\Sigma^{-1}_{k,k}\, \theta^{(k)}) \prod_{i=1}^k \ind_{[0, \infty)}(\theta_i), 
\end{align*}
where $\widetilde{X}_{N-k} \sim \m N(\mathbf{0}_{N-k}, \widetilde{\Sigma}^{-1}_{N-k,N-k})$ with $\widetilde{\Sigma}_{N-k,N-k} = (\Sigma_{N-k,N-k} - \Sigma_{N-k,k}\,\Sigma^{-1}_{k,k}\,\Sigma_{k,N-k})^{-1}$,  and the $\le$ symbol is to be interpreted elementwise. 
Here, the constant $m_{\m C} =\bbP(X \in \m C)$ for $X \sim \m N(\mathbf{0}_N, \Sigma_N)$.
\end{proposition}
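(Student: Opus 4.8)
The plan is to derive the marginal of $\theta^{(k)}$ by integrating the joint truncated density \eqref{eq:tmvn_den} over the remaining coordinates and recognizing the resulting integral as a Gaussian orthant probability. Writing $\theta = (\theta^{(k)}, \theta_{-k})$ with $\theta_{-k} = (\theta_{k+1}, \ldots, \theta_N)^\T$ and using that the constraint set factors as $\m C = [0,\infty)^k \times [0,\infty)^{N-k}$, the marginal density is
\[
\wt p_{k,N}(\theta^{(k)}) = m_{\m C}^{-1}\, \ind_{[0,\infty)^k}(\theta^{(k)}) \int_{[0,\infty)^{N-k}} \m N_N\!\big((\theta^{(k)}, \theta_{-k}); \mathbf 0_N, \Sigma_N\big)\, \mathrm d \theta_{-k}.
\]

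The key step is to factor the integrand through the standard marginal--conditional decomposition of a Gaussian, $\m N_N(\theta; \mathbf 0, \Sigma_N) = \m N_k(\theta^{(k)}; \mathbf 0, \Sigma_{k,k})\, \m N_{N-k}(\theta_{-k}; \mu, S)$, where $\mu = \Sigma_{N-k,k} \Sigma_{k,k}^{-1} \theta^{(k)}$ is the conditional mean and $S = \Sigma_{N-k,N-k} - \Sigma_{N-k,k} \Sigma_{k,k}^{-1} \Sigma_{k,N-k}$ is the Schur complement of $\Sigma_{k,k}$ in $\Sigma_N$. The first factor does not depend on $\theta_{-k}$, so it pulls out of the integral and supplies the prefactor $(2\pi)^{-k/2} |\Sigma_{k,k}|^{-1/2}$ together with the exponential $e^{-\frac{1}{2} {\theta^{(k)}}^\T \Sigma_{k,k}^{-1} \theta^{(k)}}$; what remains is $\int_{[0,\infty)^{N-k}} \m N_{N-k}(\theta_{-k}; \mu, S)\, \mathrm d\theta_{-k} = \bbP(Y \ge \mathbf 0)$ for $Y \sim \m N(\mu, S)$.

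It then remains to cast this orthant probability in the stated form. I would center and exploit the symmetry of the mean-zero Gaussian: setting $\wt X = -(Y - \mu) \sim \m N(\mathbf 0, S)$, we get $\bbP(Y \ge \mathbf 0) = \bbP(Y - \mu \ge -\mu) = \bbP(\wt X \le \mu) = \bbP(\wt X \le \Sigma_{N-k,k} \Sigma_{k,k}^{-1} \theta^{(k)})$, with inequalities read elementwise. Since $\wt\Sigma_{N-k,N-k} = S^{-1}$ by definition, the covariance of $\wt X$ is exactly $\wt\Sigma_{N-k,N-k}^{-1}$, so $\wt X$ can be taken to be $\wt X_{N-k}$ and the expression matches the claim.

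The argument is essentially bookkeeping once the conditional decomposition is invoked, so I do not anticipate a genuine obstacle. The two places demanding care are the block/Schur-complement algebra, which must correctly produce the conditional mean $\Sigma_{N-k,k}\Sigma_{k,k}^{-1}\theta^{(k)}$ and covariance $S$, and the sign flip in the symmetry step, which converts the orthant event $\{Y \ge \mathbf 0\}$ into the upper-tail event $\{\wt X \le \mu\}$ and is precisely the source of the skewing factor. Specializing to $k = 1$, $N = 2$ recovers Cartinhour's univariate formula and the skew-normal marginal noted above.
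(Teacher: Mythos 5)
Your proposal is correct and follows essentially the same route as the paper's proof: the paper completes the square in the precision-matrix parametrization, which is exactly the marginal--conditional factorization you invoke, and then performs the same symmetry flip $\bbP(Y \ge \mathbf 0) = \bbP(\wt X \le \mu)$ with $\wt X \sim \m N(\mathbf 0, S)$ and $S = \wt\Sigma_{N-k,N-k}^{-1}$. The only (minor) divergence is that you correctly retain the factor $|\Sigma_{k,k}|^{-1/2}$ from $\det(\Sigma_N)^{-1/2} = \det(\Sigma_{k,k})^{-1/2}\det(\wt\Sigma_{N-k,N-k})^{1/2}$, which the proposition's displayed formula silently absorbs.
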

 
When $k=1$, Proposition \ref{PROP:MARG_K} implies 
\begin{align}\label{eq:univ_mar}
\widetilde{p}_{1,N} \propto e^{-\theta^2_1/(2\Sigma_{1,1})}\bbP(\widetilde{X}_{N-1}\le\Sigma_{N-1,1} \,\theta_1/\Sigma_{1,1}) \ind_{[0, \infty)}(\theta_1). 
\end{align}
Let $\m S_N$ denote the set of $N \times N$ covariance matrices whose correlation coefficients are all non-negative. The map $\theta_1 \mapsto e^{-\theta^2_1/(2\Sigma_{1,1})}$ is decreasing and when $\Sigma_N\in \m S_N$, $\theta_1 \mapsto \bbP(\widetilde{X}_{N-1}\le\Sigma_{N-1,1} \,\theta_1/\Sigma_{1,1})$ is increasing, on $(0, \infty)$. Thus, if $\Sigma_N\in \m S_N$, $\widetilde{p}_{1,N}$ is unimodal with a strictly positive mode.


As another special case, suppose $\Sigma = \Sigma_N(\rho)$ for some $\rho \in (0, 1)$ and let $k = N-1$. We then have, 
$$
\widetilde{p}_{N-1,N} \propto e^{- {\theta^{(N-1)}}^\T \, \Sigma_{N-1}^{-1}(\rho) \,\theta^{(N-1)} } \, \Phi(a^\T \theta^{(N-1)}) \ \prod_{i=1}^{N-1} \ind_{[0, \infty)}(\theta_i),
$$
with $a = C_\rho \big(\sum_{i=1}^{N-1} \theta_i \big) \, \mathbf{1}_{N-1}$, where $C_\rho$ is a positive constant. 
This density can be recognized as a multivariate skew-normal distribution \citep{azzalini1996multivariate} truncated to the non-negative orthant.

\subsection{Mass-shifting phenomenon of marginal densities}
While the results in the previous section imply that the marginal distributions shift mass away from the origin, they do not precisely characterize the severity of its prevalence. In this section, we show that under appropriate conditions, the univariate marginals assign increasingly smaller mass to a fixed neighborhood of the origin with increasing dimension. In other words, the skewing factor noted by Cartinhour begins to dominate when the ambient dimension is large. In addition to the dimension, we also quantify the amount of dependence in $\Sigma_N$ contributing to this mass-shifting. To the best of our knowledge, this has not been observed or quantified in the literature. 

We state our results for $\Sigma_N \in \m B_{N,K}$, where for $2 \le K \le N-1$, $\m B_{N, K}$ denotes the space of $K$-banded nonnegative correlation matrices,
\begin{align}\label{eq:BNK}
\m B_{N,K} = \Big\{ \Sigma_N = (\rho_{ij}) \in \m S_N \,: \, \rho_{ii} = 1 \ \forall\, i, \ \ \rho_{ij} = 0 \ \forall \  |i-j| \ge K \Big\}.
\end{align}
While our main theorem below can be proved for other dependence structures, the banded structure naturally arises in statistical applications as discussed in the next section. 

Given $\Sigma_N = (\rho_{ij}) \in \m B_{N,K}$, define $\rho_{\max} = \max_{i \ne j} \rho_{ij}$ and $\rho_{\min} = \min_{i \ne j, |i-j|\le K} \rho_{ij}$ to be the maximum and minimum correlation values within the band. For $\theta \sim \m N_{\m C}(\mathbf{0}, \Sigma_N)$ with $\m C = [0, \infty)^N$, let $\alpha_{N, \delta} = \bbP(\theta_1 \le \delta)$. 
With these definitions, we are ready to state our main theorem.
\begin{theorem}\label{thm:main}
Let $\Sigma_N \in \m B_{N, K}$ be such that $(\rho_{\min}, \rho_{\max}) \in \m Q$, where 
$$
\m Q = \bigg\{ (u, v) \in (0, 1)^2: u \le v, \ \frac{u}{2(1-u)} \ge v \bigg\}. 
$$
Then, there exists a constant $K_0$ such that whenever $K \ge K_0$, we have for any $\delta > 0$, 
$$
\alpha_{N, \delta} \le C'_{\rho_{\min},\rho_{\max}} \, \delta \,(\log K)^{1/2} \, K^{-G(\rho_{\min},\rho_{\max})},
$$
where $G$ is a positive rational function of $(\rho_{\min}, \rho_{\max})$, and $C'_{\rho_{\min},\rho_{\max}}>0$ is a constant free of $N$.
\end{theorem}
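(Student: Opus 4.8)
The plan is to start from the exact univariate marginal in \eqref{eq:univ_mar} and recast $\alpha_{N,\delta} = \bbP(\theta_1 \le \delta)$ as a ratio of orthant-type Gaussian integrals. Writing $g(t) = \bbP(\widetilde X_{N-1} \le \Sigma_{N-1,1}\, t)$ for the Cartinhour skewing factor (here $\Sigma_{1,1}=1$ since $\Sigma_N$ is a correlation matrix), Proposition \ref{PROP:MARG_K} gives
\[
\alpha_{N,\delta} = \frac{\int_0^\delta e^{-t^2/2}\, g(t)\,\d t}{\int_0^\infty e^{-t^2/2}\, g(t)\,\d t} = \frac{\bbP(0\le X_1\le \delta,\ X_j\ge 0\ \forall\, j\ge 2)}{\bbP(X_j \ge 0\ \forall\, j)},
\]
for $X \sim \m N(\mathbf 0, \Sigma_N)$. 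Because $\Sigma_N \in \m B_{N,K}$ has nonnegative entries, $g$ is nondecreasing on $(0,\infty)$, so the numerator is at most $\delta\, g(\delta)$; this already extracts the factor linear in $\delta$ and reduces the theorem to bounding $g(\delta)\big/\!\int_0^\infty e^{-t^2/2} g(t)\,\d t$, equivalently to bounding the conditional orthant probability $\bbP(X_j \ge 0\ \forall\, j\ge 2 \mid X_1 = \delta)$ from above and the full orthant probability $\bbP(X \ge 0)$ from below.

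The first simplification I would exploit is the banding: coordinate $1$ is correlated only with coordinates $2,\dots,K$, so the threshold vector $\Sigma_{N-1,1}t$ is supported on its first $K-1$ entries and the dependence of $g$ on $t$ is governed by the block of size $\sim K$ around coordinate $1$. The aim is to localize both probabilities to this leading block, controlling the remaining coordinates through an $m$-dependence/block decomposition so that their contribution cancels in the ratio up to an $N$-free constant; this is what produces a rate in $K$ rather than $N$. The technical heart is then the Gaussian comparison step, where I would invoke Lemma \ref{slepian_gl_main} to sandwich the localized banded probabilities between compound-symmetry references: an upper bound on the conditional orthant probability via $\Sigma_K(\rho_{\max})$ (larger correlations only increase such probabilities) and a lower bound on the full orthant probability via a reference tied to $\rho_{\min}$. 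The subtlety, and the reason a bespoke inequality is needed rather than classical Slepian, is that a $K$-banded matrix and a compound-symmetry matrix are not comparable entrywise (the band has larger within-band but smaller out-of-band correlations than any $\Sigma_d(\rho)$), so the comparison must respect the zero pattern.

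For a compound-symmetry reference one has the latent representation $X_i = \sqrt{\rho}\,W + \sqrt{1-\rho}\,Z_i$, which turns both quantities into one-dimensional integrals of the form $\int e^{-w^2/2}\,\Phi(cw)^{m}\,\d w$ (full orthant) and $\int e^{-w^2/2}\, e^{-c^2 w^2/2}\,\Phi(cw)^{m}\,\d w$ (conditional orthant, the extra Gaussian factor being the density at the moving threshold), with $c = \sqrt{\rho/(1-\rho)}$ and $m \asymp K$. I would evaluate these by Laplace/saddle-point analysis: the factor $\Phi(cw)^{m}$ pushes the dominant contribution to $w^\ast \asymp \sqrt{2\log K}/c$, and substituting this saddle together with the Mills-ratio asymptotic $1-\Phi(x)\sim x^{-1}e^{-x^2/2}/\sqrt{2\pi}$ yields the polynomial rate. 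The exponent $G$ emerges as the difference between the decay rate of the (numerator) conditional probability and that of the (denominator) full probability, while the $(\log K)^{1/2}$ factor comes from the growth of the saddle location $w^\ast$. The hypothesis $(\rho_{\min},\rho_{\max}) \in \m Q$, in particular $\rho_{\min}/\{2(1-\rho_{\min})\}\ge \rho_{\max}$, is precisely what forces this difference of rates to be strictly positive, i.e. $G(\rho_{\min},\rho_{\max})>0$, and keeps the two saddles used for the numerator and denominator bounds compatible.

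I expect the main obstacle to be the comparison step. Since banded and compound-symmetry covariances are not Slepian-comparable, proving Lemma \ref{slepian_gl_main} in a form sharp enough to retain genuine $\rho_{\min}$ and $\rho_{\max}$ dependence (rather than collapsing to a single correlation, which I anticipate would wash out the $\rho$-dependence of the exponent) is the delicate part, and it is this sharpness on which both the admissibility region $\m Q$ and the final rational exponent $G$ depend. A secondary difficulty is ensuring the Laplace estimates are uniform in $K$ and carefully tracking prefactors so as to land exactly the clean $(\log K)^{1/2}\,K^{-G}$ bound with an $N$-free constant.
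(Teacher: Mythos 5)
Your high-level architecture matches the paper's: write $\alpha_{N,\delta}$ as a ratio of Gaussian orthant-type probabilities, use the $K$-bandedness to localize both numerator and denominator to blocks of size $\asymp K$, compare the localized probabilities to compound-symmetry references $\Sigma_K(\rho_{\max})$ and $\Sigma_K(\rho_{\min})$, and then estimate those via the one-factor representation $X_i = \rho^{1/2}w + (1-\rho)^{1/2}W_i$. Your Laplace/saddle-point treatment of the resulting one-dimensional integrals is a legitimate alternative to the paper's route (which conditions on the common factor, uses $\bbE\max_i|W_i|\le(2\log d)^{1/2}$ with Jensen's inequality, and Mills-ratio bounds), and would plausibly recover the same $(\log K)^{1/2}K^{-G}$ form. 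However, there are two concrete gaps.

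First, you misdiagnose why Lemma \ref{slepian_gl_main} is needed. You claim the obstacle is that a banded matrix and a compound-symmetry matrix are not entrywise comparable, so the comparison ``must respect the zero pattern.'' But in the paper's reduction the comparison is applied only \emph{after} restricting to a single $K\times K$ block, inside which every off-diagonal correlation lies in $[\rho_{\min},\rho_{\max}]$ (there are no zeros: all index pairs satisfy $|i-j|<K$), so the entrywise hypotheses of Slepian-type inequalities hold directly against $\Sigma_K(\rho_{\min})$ and $\Sigma_K(\rho_{\max})$. The actual reason a new inequality is required is the \emph{two-sided} constraint $0\le X_1\le\delta$ in the numerator event: classical Slepian handles only events of the form $\{X_i\ge u_i\ \forall i\}$, and Lemma \ref{slepian_gl_main} extends the entrywise comparison to $\bbP(\ell_1\le X_1\le u_1,\,X_2\ge u_2,\dots)$. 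Your anticipated ``delicate part'' is a phantom, and the genuine new ingredient is unaddressed.

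Second, your localization step asserts that the contribution of the coordinates outside the leading block ``cancels in the ratio up to an $N$-free constant.'' It does not cancel up to a constant, and this is where both the region $\m Q$ and the exponent $G$ are actually determined. In the paper, the numerator is decoupled by discarding the constraints on the middle block $Z_{[(K+1):2K]}$ entirely, while the denominator (lower-bounded via a block-independent Slepian comparison) retains an orthant probability for that block; after the common factor $\bbP(Z_{[(2K+1):N]}\ge\mathbf 0)$ cancels, the denominator is effectively $\{\bbP(Z''\ge\mathbf 0)\}^2\asymp K^{-2\bar\rho_{\min}}$ — squared, not linear. This polynomial-in-$K$ cost of decoupling is precisely why the admissibility condition is $2\bar\rho_{\min}<1/\rho_{\max}$, i.e.\ $\rho_{\min}/\{2(1-\rho_{\min})\}\ge\rho_{\max}$, rather than the weaker condition one would get from a ``free'' cancellation. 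As written, your argument cannot derive the stated $\m Q$; you assert its role rather than obtain it. (A smaller related issue: bounding the numerator by $\delta\,g(\delta)$ and then needing a bound uniform over all $\delta>0$ forces you to control $\sup_\delta g(\delta)$, which loses the coupling between the interval event and the orthant event that the paper retains by applying the generalized Slepian inequality to the joint event $\{0\le Z_1\le\delta,\ Z_{[2:K]}\ge\mathbf 0\}$ directly.)
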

In particular, if we consider a sequence of $K_N$-banded correlation matrices $\Sigma_N \in \m B_{N, K_N}$ with $K_N \to \infty$ as $N \to \infty$, then under the conditions of Theorem \ref{thm:main}, $\lim_{N \to \infty} \alpha_{N, \delta} = 0$ for any fixed $\delta > 0$. Theorem \ref{thm:main}, being non-asymptotic in nature, additionally characterizes the rate of decay of $\alpha_{N, \delta}$. To contrast the conclusion of Theorem \ref{thm:main} with two closely related cases, consider first the case when $\theta \sim \m N(\mathbf{0}, \Sigma_N)$. For any $N$, the marginal distribution of $\theta_1$ is always $\m N(0, 1)$, and hence $\alpha_{N, \delta}$ does not depend on $N$. Similarly, if $\theta \sim \m N_{\m C}(\mathbf{0}, \Sigma_N)$ with $\Sigma_N$ a diagonal correlation matrix, then for any $N \ge 1$, the marginal distribution of $\theta_1$ is $\m N(0, 1)$ truncated to $(0, \infty)$ and $\alpha_{N, \delta}$ again does not depend on $N$. In particular, in both these cases, $\alpha_{N, \delta} \asymp \delta$ for $\delta$ small. However, when a combination of dependence and truncation is present, an additional $(\log K)^{1/2} \, K^{-G(\rho_{\min},\rho_{\max})}$ penalty is incurred. 

\begin{figure}[h]
  \begin{center}
    \includegraphics[width=0.50\textwidth]{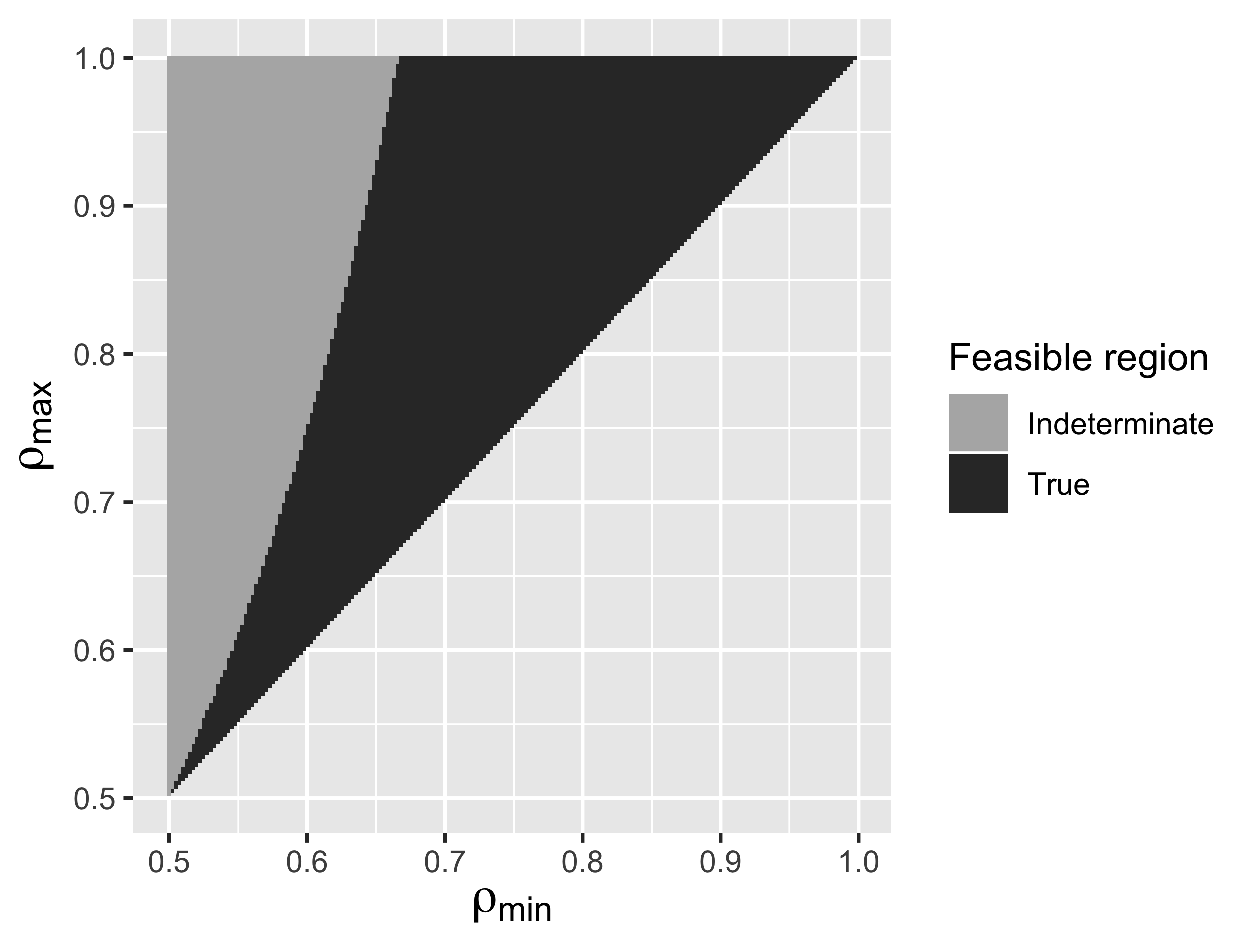}
  \end{center}
  \caption{The region shaded in black depicts $\m Q$ from the statement of Theorem \ref{thm:main}. }
  \label{fig:region}
\end{figure}

For the conclusion of Theorem \ref{thm:main} to hold, our current proof technique requires $(\rho_{\min}, \rho_{\max})$ to lie in the region $\m Q$, which is pictorially represented by the black shaded region in Figure \ref{fig:region}. As a special case, if all the non-zero correlations are the same, i.e., $\rho_{\min} = \rho_{\max}$, then the condition simplifies to $\rho_{\min} > 0.5$. More generally, if we write $\rho_{\min} = \kappa \rho_{\max}$ for some $\kappa \in (0, 1]$, then the condition reduces to $\rho_{\min} \ge 1 - \kappa/2$.  

\begin{remark}\label{rem:bet}
For any fixed $N$, the marginal density of $\theta_1$ evaluated at the origin, $\widetilde{p}_{1,N}(0) = \lim_{\delta \to 0} \alpha_{N, \delta}/\delta$. Theorem \ref{thm:main} thus implies in particular that $\lim_{N\to \infty}\widetilde{p}_{1,N}(0) = 0$. Also, for any fixed $1 \le k \le N$, if we denote $\beta_{N, k, \delta} = \bbP(\theta_1 \le \delta, \ldots, \theta_k \le \delta)$, it is immediate that $\beta_{N, k, \delta} < \alpha_{N, \delta}$, and hence $\lim_{N \to \infty} \beta_{N, k, \delta} = 0$, meaning the probability of a corner region is vanishingly small for large $N$. 
\end{remark}

We now empirically illustrate the conclusion of the theorem by presenting the univariate marginal density $\widetilde{p}_{1,N}$ for different values of the dimension $N$ and the bandwidth $K$. The density calculations were performed using the {\bf R} package {\bf tmvtnorm}, which is based on the numerical approximation algorithm proposed in \cite{cartinhour1990one} and subsequent refinements in  \cite{genz1992numerical,genz1993comparison,genz2009computation}. 
\begin{figure}[h]
	\centering	
	\begin{multicols}{3}
  \includegraphics[scale = 0.046]{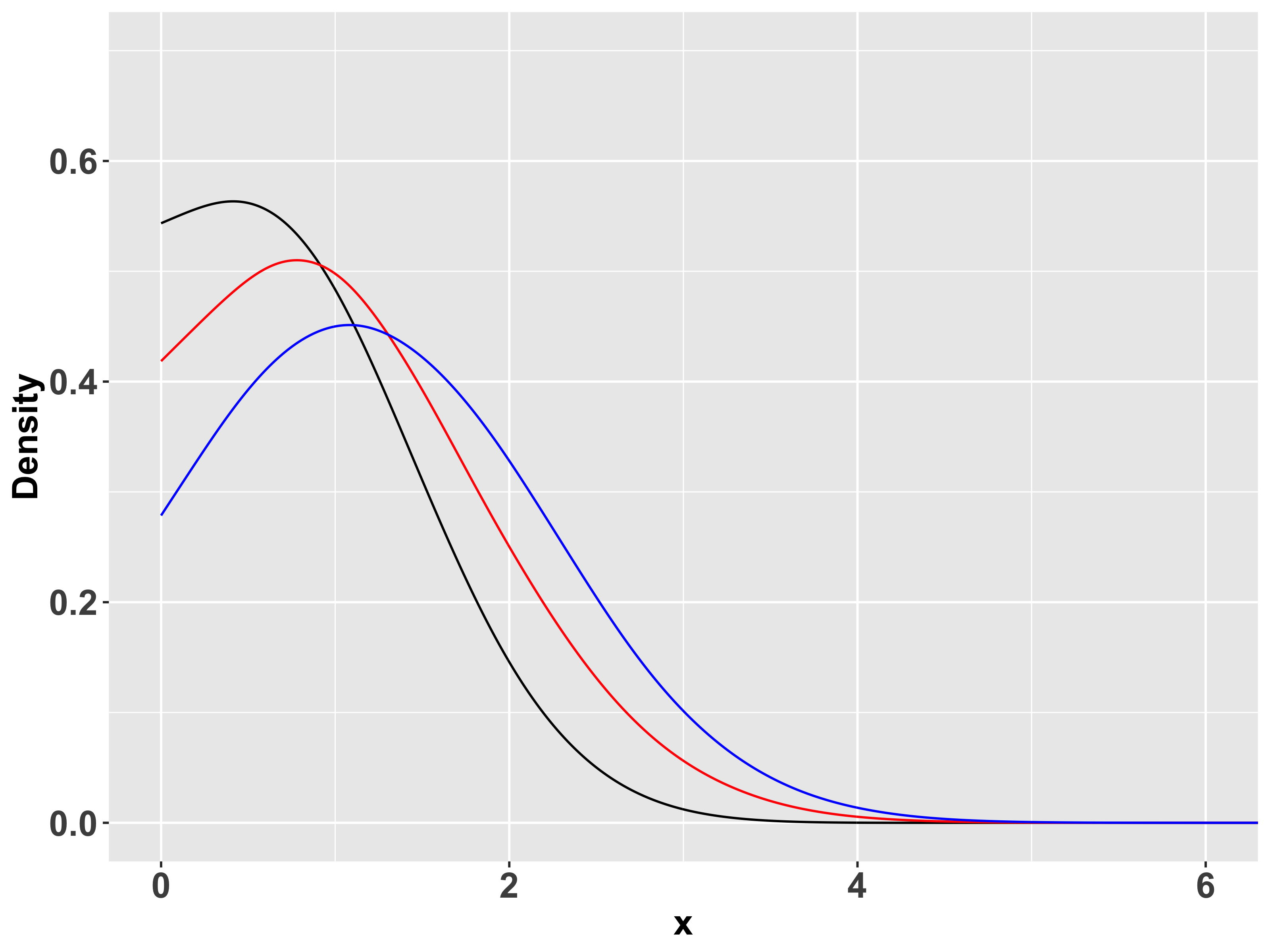}
   \includegraphics[scale = 0.046]{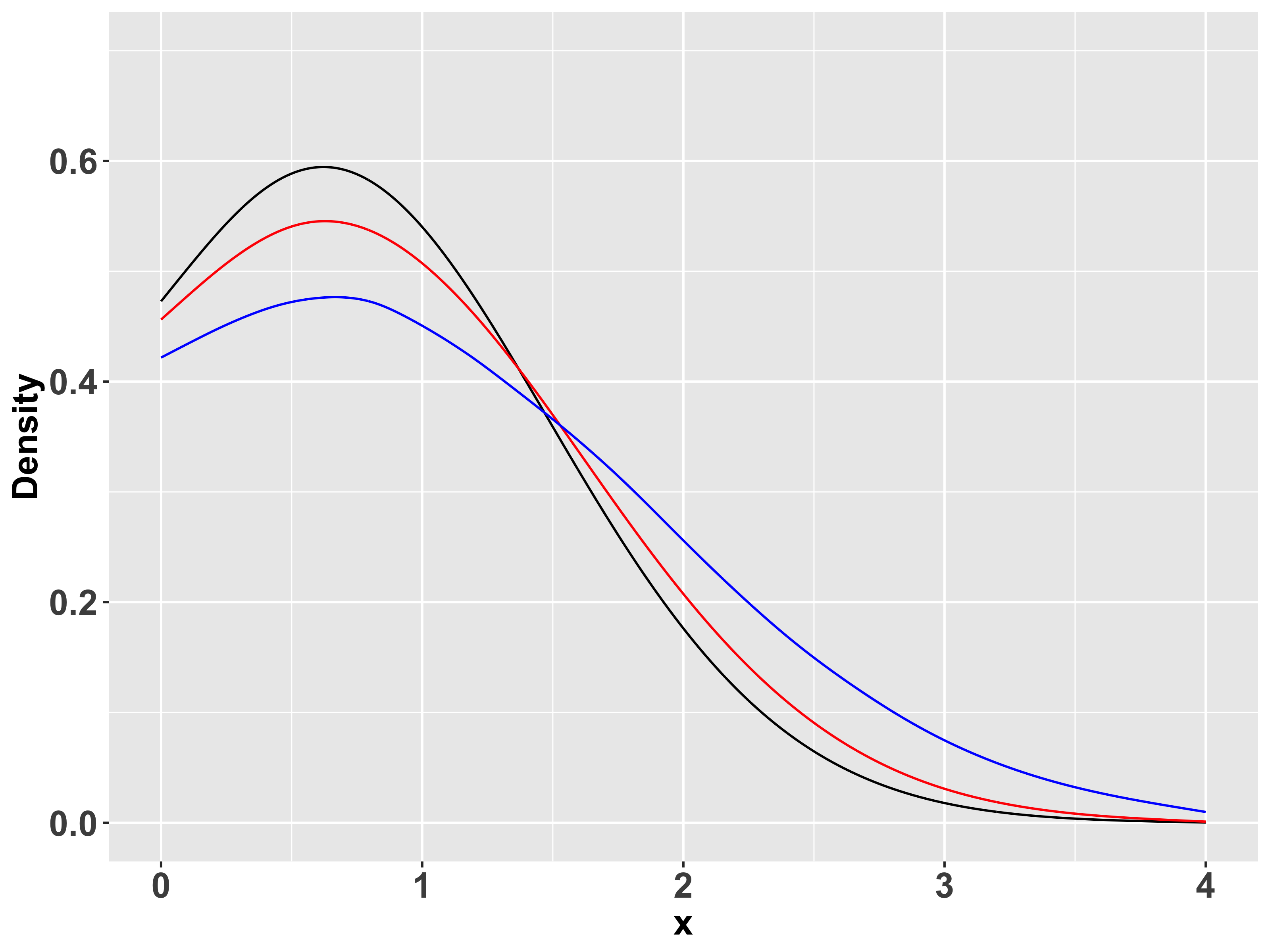}
   \includegraphics[scale = 0.046]{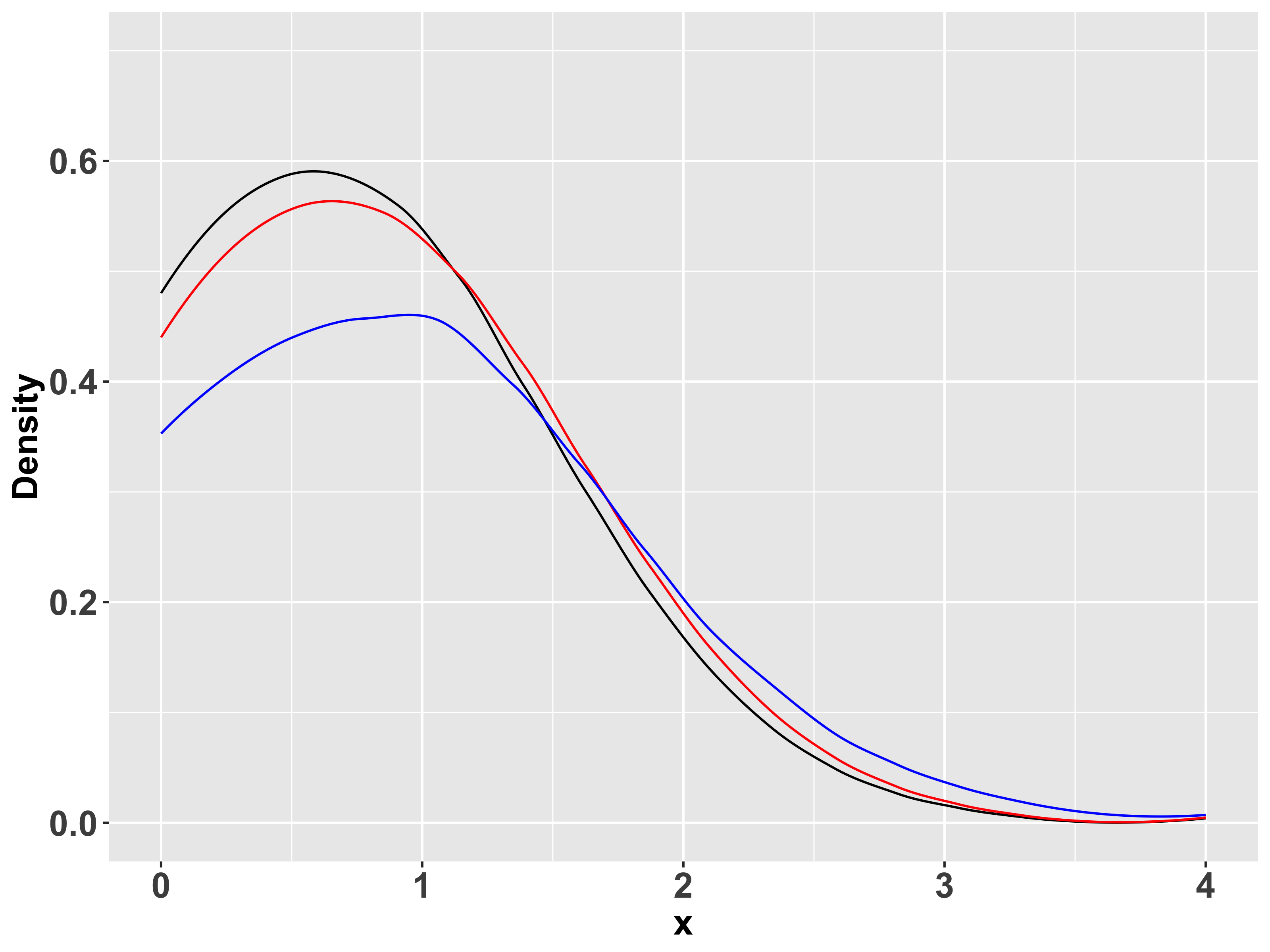}
\end{multicols}
\caption{\small {\em Left panel shows marginal density functions $\widetilde{p}_{1,N}$ for $K=2$ (black), $K=5$ (red) and $K=20$ (blue) with $N=100$. Middle panel shows $\widetilde{p}_{1,N}$ for $N=10$ (black), $N=50$ (red) and $N=100$ (blue) with $K=5$. Right panel shows $\widetilde{p}_{1,N}$ for $(K,N)=(5,25)$ (black), $(20,100)$ (red) and $(50,250)$ (blue).}}
	\label{fig:marg_den}
\end{figure} 
The left panel of Figure \ref{fig:marg_den} shows that for $N$ fixed at a moderately large value, the probability assigned to a small neighborhood of the origin decreases with increasing $K$. Also, the mode of the marginal density increasingly shifts away from zero. A similar effect is seen for a fixed $K$ and increasing $N$ in the middle panel and also for an increasing pair $(K,N)$ in the right panel, although the mass-shifting effect is somewhat weakened compared to the left panel. This behavior perfectly aligns with the main message of the theorem that the interplay between the truncation and the dependence brings forth the mass-shifting phenomenon. 


               
The proof of Theorem \ref{thm:main} is non-trivial; we provide the overall chain of arguments in the next subsection, deferring the proof of several auxiliary results to the supplemental document. The marginal probability of $(0, \delta)$ is a ratio of the probabilities of two rectangular regions under a $\m N(\mathbf{0}, \Sigma_N)$ distribution, with the denominator appearing due to the truncation. While there is a rich literature on estimating tail probabilities under correlated multivariate normals using multivariate extensions of the Mill's ratio \citep{savage1962mills,ruben1964asymptotic,sidak1968multivariate,steck1979lower,hashorva2003multivariate,lu2016note}, the existing bounds are more suited for numerical evaluation \citep{cartinhour1990one,genz1992numerical,genz2009computation} and pose analytic difficulties due to their complicated forms. Moreover, the current bounds lose their accuracy when the region boundary is close to the origin \citep{gasull2014approximating}, which is precisely our object of interest. Our argument instead relies on novel usage of Gaussian comparison inequalities such as the Slepian's inequality; see \cite{li2001gaussian,vershynin2016high} for book-level treatments. We additionally derive a generalization of Slepian's inequality in Lemma \ref{slepian_gl_main}, which might be of independent interest. As an important reduction step, we introduce a blocking idea to carefully approximate the banded scale matrix $\Sigma_N$ by a block tridiagonal matrix to simplify the analysis.

\subsection{Proof of Theorem \ref{thm:main}}\label{sketch}
By definition, 
\begin{align}\label{prob_ratio}
\alpha_{N,\delta} =\bbP(\theta_1 \le \delta) = \frac{\bbP(0\le Z_1 \le \delta, Z_2 \ge 0, \ldots, Z_N \ge0)}{\bbP(Z_1 \ge 0, Z_2\ge 0, \ldots, Z_N \ge0)}, 
\end{align}
where $Z\sim \m N(\mathbf{0}, \Sigma_N)$. We now proceed to separately bound the numerator and denominator in the above display. 

We first consider the denominator in equation \eqref{prob_ratio}, and use Slepian's lemma to bound it from below. It follows from Slepian's inequality, see comment after Lemma \ref{slepian} in the supplemental document, that if $X, Y$ are centered $d$-dimensional Gaussian random variables with $\bbE(X_i^2) = \bbE(Y_i^2)$ for all $i$, and $\bbE(X_i X_j) \le \bbE(Y_i Y_j)$ for all $i \ne j$, then 
\begin{align}\label{eq:Slep}
\bbP(X_1 \ge 0, \ldots, X_d \ge 0) \le \bbP(Y_1 \ge 0, \ldots, Y_d \ge 0). 
\end{align}

\begin{figure}[h!]
\centering
	\begin{subfigure}{.5\textwidth}
		\centering
		\includegraphics[width=\textwidth]{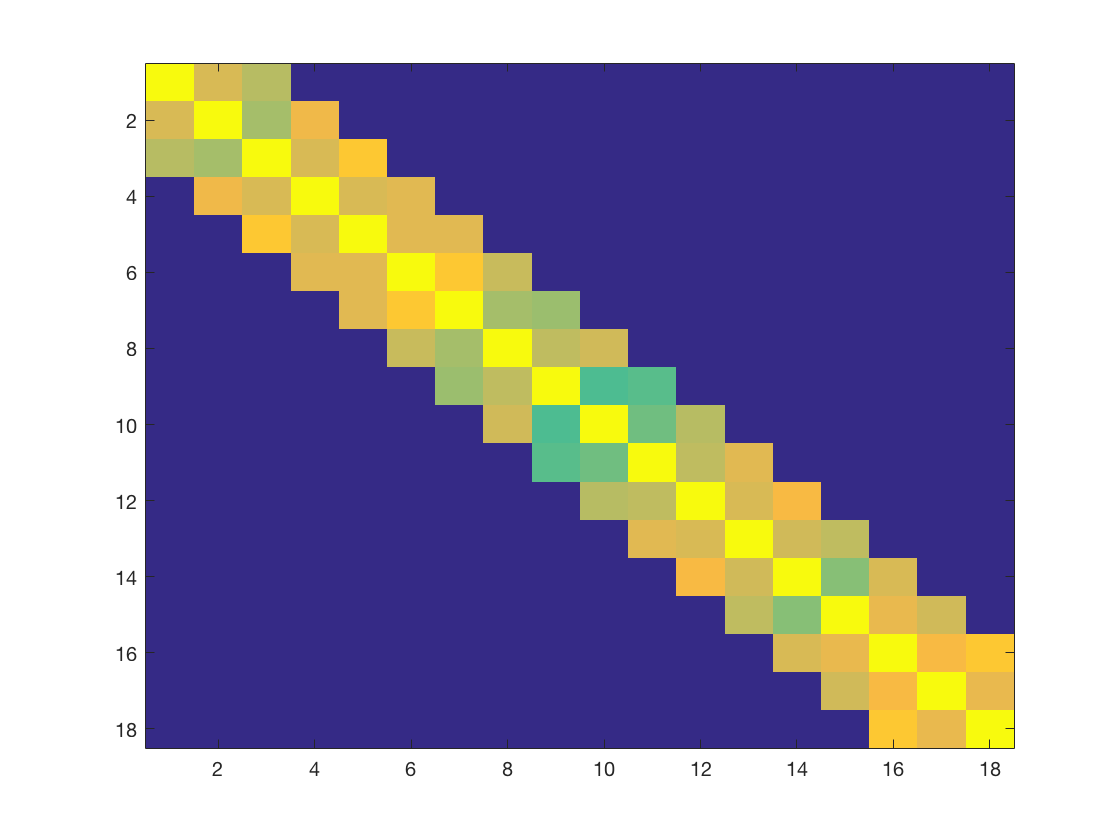}
	\end{subfigure}%
	\begin{subfigure}{.5\textwidth}
		\centering
		\includegraphics[width=\textwidth]{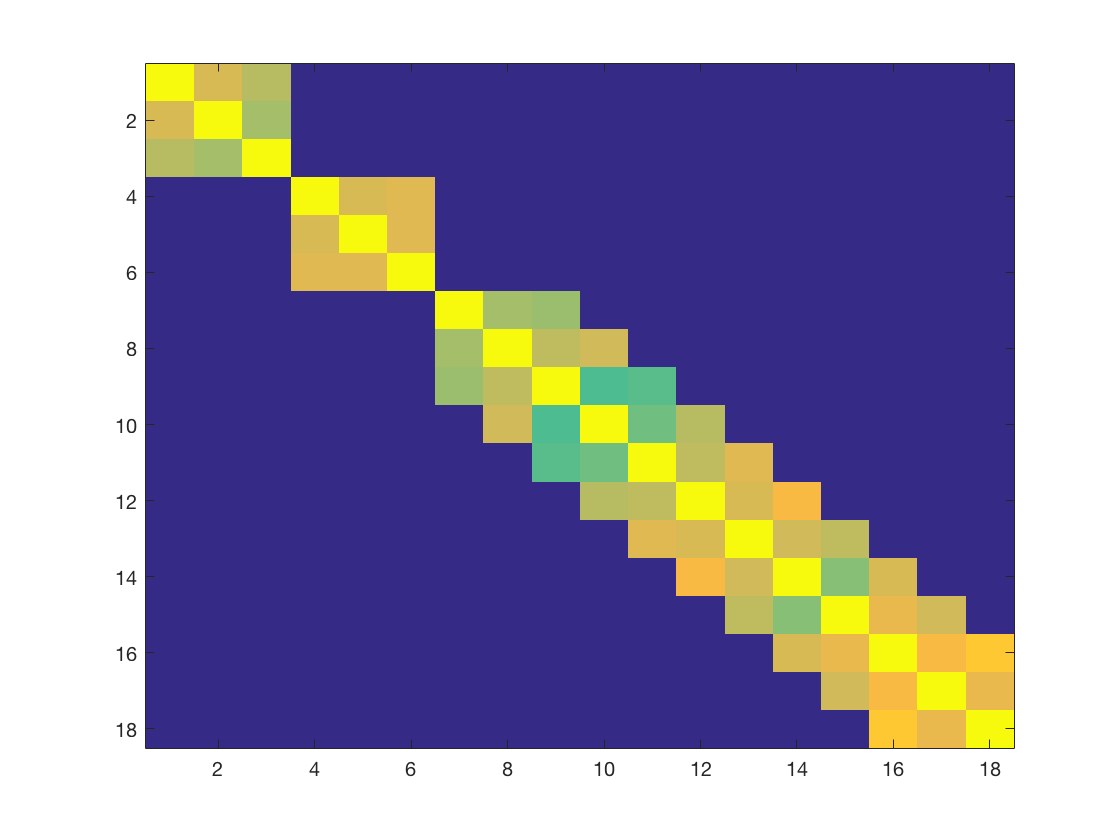}
	\end{subfigure}
	\caption{{\em Left panel: example of $\Sigma_N$ with $N = 18, K = 3$. Right panel: the corresponding block approximation $\wt{\Sigma}_N$.}}
	\label{fig-blk}
\end{figure}

The Slepian's inequality is a prominent example of a Gaussian comparison inequality originally developed to bound the supremum of  Gaussian processes. To apply Slepian's inequality to the present context, we construct another $N$-dimensional centered Gaussian random vector $S \sim \m N_N(\mathbf{0}, \wt{\Sigma}_N)$ such that (i) $S_{[1\,:\,K]} \overset{d}{=} Z_{[1\,:\,K]}$, $S_{[(K+1)\,:\,2K]}\overset{d}{=} Z_{[(K+1)\,:\,2K]}$ and $S_{[(2K+1)\,:\,N]} \overset{d}{=} Z_{[(2K+1)\,:\,N]}$, and (ii) the sub-vectors $S_{[1\,:\,K]}$, $S_{[(K+1)\,:\,2K]}$ and $S_{[(2K+1)\,:\,N]}$ are mutually independent. The correlation matrix $\wt{\Sigma}_N$ of $S$ clearly satisfies $(\Sigma_N)_{ij} \ge (\wt{\Sigma}_N)_{ij}$ for all $i \ne j$ by construction. Figure \ref{fig-blk} pictorially depicts this block approximation in an example with $N = 18$ and $K = 3$. Applying Slepian's inequality, we then have, 
\begin{align}\label{low_bd1_main}
&\bbP(Z_1\ge 0,\ldots, Z_N\ge 0) \ge \bbP(S_1\ge 0,\ldots, S_N\ge 0) \nonumber \\
&~~~=\bbP(S_{[1\,:\,K]}\ge \mathbf{0})\,\bbP(S_{[(K+1)\,:\,2K]}\ge \mathbf{0})\,\bbP(S_{[(2K+1)\,:\,N]}\ge \mathbf{0}) \nonumber \\
&~~~=\bbP(Z_{[1\,:\,K]}\ge \mathbf{0})\,\bbP(Z_{[(K+1)\,:\,2K]}\ge \mathbf{0})\,\bbP(Z_{[(2K+1)\,:\,N]}\ge \mathbf{0}).
\end{align} 

Next, we consider the numerator in equation \eqref{prob_ratio}. We have, 
\begin{align}\label{upp_bd1_main}
&\bbP(0\le Z _1 \le \delta, Z_2\ge 0, \ldots, Z_N \ge0) \nonumber \\
&~~~~~~~~\le \bbP\big(0\le Z_1 \le \delta, Z_{[2\,:\,K]}\ge \mathbf{0}, \, Z_{[(K+1)\,:\,2K]}\in \mathbb{R}^K,\,Z_{[(2K+1)\,:\,N]}\ge \mathbf{0}\big) \nonumber \\
&~~~~~~~~= \bbP\big(0 \le Z_1 \le \delta, Z_{[2\,:\,K]}\ge \mathbf{0}\big)\bbP\big(Z_{[(2K+1)\,:\,N]}\ge \mathbf{0}\big).
\end{align}
The last equality crucially uses $Z_{[1\,:\,K]}$ and $Z_{[(2K+1)\,:\,N]}$ are independent, which is a consequence of $\Sigma_N$ being $K$-banded. Taking the ratio of equations \eqref{low_bd1_main} and \eqref{upp_bd1_main}, the term $\bbP(Z_{[(2K+1)\,:\,N]}\ge \mathbf{0})$ cancels so that 
\begin{align}\label{eq:intermd}
\alpha_{N,\delta} \le \frac{ \bbP(0\le Z_1 \le \delta, Z_{[2\,:\,K]}\ge \mathbf{0})}{\bbP(Z_{[1\,:\,K]}\ge \mathbf{0})\,\bbP(Z_{[K+1\,:\,2K]}\ge \mathbf{0})} = R.
\end{align}
To bound the terms $\bbP(Z_{[1\,:\,K]}\ge \mathbf{0})$ and $\bbP(Z_{[K+1\,:\,2K]}\ge \mathbf{0})$ in the denominator of $R$, we resort to another round of Slepian's inequality. Let $Z'' \sim \m N(\mathbf{0}, \Sigma_K(\rho_{\min}))$, where recall that 
$\rho_{\min}$ is the minimum non-zero correlation in $\Sigma_N$. Also, recall from equation \eqref{eq:comp_symm} that $\Sigma_K(\rho_{\min})$ denotes the $K \times K$ compound-symmetry correlation matrix with all correlations equal to $\rho_{\min}$. By construction, for any $1 \le i \ne j \le K$, $\bbE (Z_i Z_j), \bbE(Z_{K+i} Z_{K+j}) \ge \rho_{\min} = \bbE(Z''_i Z''_j)$. Thus, applying Slepian's inequality as in equation \eqref{eq:Slep}, 
\be 
\bbP(Z_{[1\,:\,K]}\ge \mathbf{0})\,\bbP(Z_{[K+1\,:\,2K]}\ge \mathbf{0}) \ge \{\bbP(Z'' \ge \mathbf{0})\}^2. 
\ee

The numerator of equation \eqref{eq:intermd} cannot be directly tackled by Slepian's inequality, and we prove the following comparison inequality in the supplemental document. 
\begin{lemma}\label{slepian_gl_main}
(Generalized Slepian's inequality) Let $X, Y$ be centered $d$-dimensional Gaussian vectors with $\bbE X_i^2 = \bbE Y_i^2$ for all $i$ and $\bbE (X_iX_j) \le \bbE (Y_iY_j)$ for all $i \ne j$. Then for any $0\le \ell_1 < u_1$ and $u_2, \ldots u_d \in \mb R$,  we have 
\be
\bbP\big(\ell_1 \le X_1\le u_1 , X_2 \ge u_2, \ldots, X_d \ge u_d\big) \le \bbP\big(\ell_1 \le Y_1 \le u_1, Y_2 \ge u_2, \ldots, Y_d \ge u_d\big).
\ee
\end{lemma}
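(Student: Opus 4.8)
The plan is to prove Lemma~\ref{slepian_gl_main} by the Gaussian interpolation method that underlies Slepian's inequality, reducing the claim to a sign analysis of mixed second derivatives. Realize $X$ and $Y$ as independent vectors on a common space and set $Z(t)=\sqrt{1-t}\,X+\sqrt{t}\,Y$ for $t\in[0,1]$; this is a centered Gaussian with covariance $\Sigma(t)=(1-t)\Sigma^X+t\Sigma^Y$, where $\Sigma^X,\Sigma^Y$ denote the covariance matrices of $X,Y$. Since $\bbE X_i^2=\bbE Y_i^2$, the variances $\Sigma_{ii}(t)$ do not depend on $t$, so only off-diagonal entries move and $\dot\Sigma_{ij}(t)=\bbE Y_iY_j-\bbE X_iX_j\ge0$ for $i\ne j$. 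Approximating the indicator of the target event $E=\{\ell_1\le z_1\le u_1\}\cap\bigcap_{k\ge2}\{z_k\ge u_k\}$ by a smooth product $f_\varepsilon(z)=h_\varepsilon(z_1)\prod_{k\ge2}g_{k,\varepsilon}(z_k)$, with $h_\varepsilon$ a mollification of $\ind_{[\ell_1,u_1]}$ and each $g_{k,\varepsilon}$ a smooth \emph{nondecreasing} mollification of $\ind_{[u_k,\infty)}$, the Gaussian interpolation formula gives
\begin{equation*}
\frac{d}{dt}\,\bbE\big[f_\varepsilon(Z(t))\big]=\sum_{i<j}\big(\bbE Y_iY_j-\bbE X_iX_j\big)\,\bbE\big[\partial_i\partial_j f_\varepsilon(Z(t))\big].
\end{equation*}
As every coefficient is nonnegative, it suffices to establish $\bbE[\partial_i\partial_j f_\varepsilon(Z(t))]\ge0$ for all $i<j$ and all $t$; integrating over $t\in[0,1]$ and letting $\varepsilon\downarrow0$ (the boundary of $E$ is Lebesgue-null) then yields $\bbP(Y\in E)\ge\bbP(X\in E)$.

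For the pairs $2\le i<j$ the mixed derivative factorizes as $\partial_i\partial_j f_\varepsilon=h_\varepsilon(z_1)\,g'_{i,\varepsilon}(z_i)\,g'_{j,\varepsilon}(z_j)\prod_{k\ne1,i,j}g_{k,\varepsilon}(z_k)$, a product of nonnegative factors (using $g'_{i,\varepsilon},g'_{j,\varepsilon}\ge0$ by monotonicity), so its expectation is automatically nonnegative; this is precisely the mechanism behind the ordinary inequality~\eqref{eq:Slep}. The genuinely new terms are the pairs $i=1<j$, for which $\partial_1\partial_j f_\varepsilon=h'_\varepsilon(z_1)\,g'_{j,\varepsilon}(z_j)\prod_{k\ne1,j}g_{k,\varepsilon}(z_k)$. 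Here $h'_\varepsilon$ is \emph{not} sign-definite --- positive near $\ell_1$ and negative near $u_1$ --- so the integrand changes sign and the ordinary argument collapses; controlling this term is the heart of the matter. Letting $\varepsilon\downarrow0$, $h'_\varepsilon$ concentrates as $\delta_{\ell_1}-\delta_{u_1}$, so the quantity to be signed becomes
\begin{equation*}
\bbE\big[\partial_1\partial_j f(Z(t))\big]=\phi_1(\ell_1)\,R_j(\ell_1)-\phi_1(u_1)\,R_j(u_1),
\end{equation*}
where $\phi_1$ is the $\m N(0,\Sigma_{11}(t))$ marginal density of $Z_1(t)$ and $R_j(a)=\bbE[\,g'_j(Z_j)\prod_{k\ne1,j}g_k(Z_k)\mid Z_1(t)=a\,]\ge0$.

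This is exactly where the hypothesis $\ell_1\ge0$ enters: as $0\le\ell_1<u_1$ and the centered Gaussian density $\phi_1$ is decreasing on $[0,\infty)$, we have $\phi_1(\ell_1)\ge\phi_1(u_1)>0$, damping the negative contribution. It remains to compare $R_j(\ell_1)$ with $R_j(u_1)$. The conditional law of $(Z_k)_{k\ge2}$ given $Z_1(t)=a$ is Gaussian, with mean depending linearly on $a$ through the cross-covariances $\Sigma_{k1}(t)$ and with an $a$-free conditional covariance; using the nonnegativity of these cross-covariances and the placement of the thresholds, I expect to show $R_j$ is nonincreasing on $[0,\infty)$, so that $R_j(\ell_1)\ge R_j(u_1)$ and therefore $\bbE[\partial_1\partial_j f(Z(t))]\ge0$. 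The same reduction is transparent from $\ind_{[\ell_1,u_1]}=\ind_{[\ell_1,\infty)}-\ind_{(u_1,\infty)}$, which recasts the claim as monotonicity in $a\ge0$ of the Slepian gap $a\mapsto\bbP(Y_1\ge a,\,Y_{2:d}\ge u_{2:d})-\bbP(X_1\ge a,\,X_{2:d}\ge u_{2:d})$, driven by the favorable mean shift available when $a\ge0$. This conditional-slice comparison is the main obstacle: unlike the ordinary Slepian terms it is not pointwise sign-definite, and it is what forces the restriction $\ell_1\ge0$.
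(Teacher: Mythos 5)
Your skeleton is the same as the paper's: interpolate $Z(t)$ between $X$ and $Y$, apply the Gaussian interpolation/Stein identity, observe that the pairs $i,j\ge 2$ contribute nonnegatively exactly as in ordinary Slepian, and isolate the pairs $1<j$ as the crux. The paper does precisely this, except that instead of mollifying $\ind_{[\ell_1,u_1]}$ directly it writes the claim as $D(\ell_1)\ge D(u_1)$ for the Slepian gap $D(a)=\bbP(Y_1\ge a,\dots)-\bbP(X_1\ge a,\dots)$ and smooths the two half-line indicators separately; these are equivalent bookkeeping choices.

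The problem is that you have not proved the one step that is actually the content of the lemma. You reduce the $1<j$ terms to the inequality $\phi_1(\ell_1)R_j(\ell_1)\ge\phi_1(u_1)R_j(u_1)$, prove $\phi_1(\ell_1)\ge\phi_1(u_1)$ from $0\le\ell_1<u_1$, and then only assert that you ``expect to show'' $R_j(\ell_1)\ge R_j(u_1)$. That assertion is doubtful on two counts. First, your justification invokes ``the nonnegativity of these cross-covariances,'' but the lemma assumes only $\bbE X_iX_j\le\bbE Y_iY_j$; nothing forces $\Sigma_{k1}(t)\ge 0$. Second, even with nonnegative cross-covariances the claimed monotonicity of $R_j$ generally fails: take $d=2$ with correlation $\rho>0$ and $u_2$ large, so that $R_2(a)$ is (in the $\varepsilon\downarrow 0$ limit) the conditional density of $Z_2$ at $u_2$ given $Z_1=a$; raising $a$ from $\ell_1$ to $u_1\le\rho u_2$ moves the conditional mean $\rho a$ toward $u_2$ and \emph{increases} that density, so $R_2(u_1)>R_2(\ell_1)$. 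The quantity that must be signed is the product $\phi_1(a)R_j(a)$ (a joint, not marginal-times-monotone, object), and splitting it into two separately monotone factors cannot work. The paper's proof attacks this term differently: it keeps the smoothing parameter $\eta$ finite, writes out the ratio of the joint Gaussian density at first coordinate $s_1+\ell_1$ versus $s_1+u_1$ explicitly through the exponent involving $\widetilde\Sigma^{-1}_{11}$ and $\widetilde\Sigma^{-1}_{12}$, and chooses $\eta$ large relative to those quantities so that the $(u_1^2-\ell_1^2)\widetilde\Sigma^{-1}_{11}$ term dominates. Whatever one thinks of the delicacy of that estimate, it is a concrete argument where your proposal has a placeholder; as written, your proof is incomplete at exactly the point you yourself identify as ``the heart of the matter.''
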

Define a random variable $Z'\sim \m N(\mathbf{0}, \Sigma_{K}(\rho_{\max}))$ and use Lemma \ref{slepian_gl_main} to conclude that $\bbP(0\le Z_1 \le \delta, Z_{[2\,:\,K]}\ge \mathbf{0}) \le \bbP(0\le Z'_1 \le \delta, Z'_{[2\,:\,K]}\ge \mathbf{0})$. 

Substituting these bounds in equation \eqref{eq:intermd}, we obtain 
\begin{align}\label{eq:intermd1}
R \le R' = \frac{\bbP(0\le Z'_1 \le \delta, Z'_2\ge 0, \ldots, Z'_K\ge 0)}{\{\bbP(Z''_1\ge 0, \ldots, Z''_K\ge 0)\}^2}. 
\end{align}
The primary reduction achieved by bounding $R'$ by $R''$ is that we only need to estimate Gaussian probabilities under a compound-symmetry covariance structure. We prove the following inequalities in the supplemental document that provide these estimates. 
\begin{lemma}\label{upper_lower_small}
Let $X \sim \m N(\mathbf{0}, \Sigma_d(\rho))$ with $\rho \in (0, 1)$. Fix $\delta>0$. Define $\bar{\rho}=(1-\rho)/\rho$. Then, 
\begin{align*}
&\bbP(0\le X_1<\delta,X_2\ge 0, \ldots, X_d \ge 0) \nonumber\\
&~~~~~~~~~~~~\le \delta\, \{2(1-\alpha)\bar{\rho}\log (d-1)\}^{-1/2}\, (d-1)^{-(1-\alpha)/\rho} + \exp(-d^\alpha),
\end{align*}
for any $\alpha\in(0,1)$. Also, 
$$
\bbP(X_1 \ge 0, \ldots, X_d \ge 0) \ge \frac{(2\,\bar{\rho}\log d)^{1/2}}{2\,\bar{\rho}\log d+1}\, d^{-\bar{\rho}}.
$$
\end{lemma}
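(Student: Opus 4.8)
The plan is to reduce both inequalities to one-dimensional integrals via the one-factor (random-effects) representation of the compound-symmetry law, and then to estimate these integrals with Mills-ratio tail bounds. Write $\phi$ for the standard normal density, $\bar\Phi = 1 - \Phi$, and set $c = \sqrt{\rho/(1-\rho)} = \bar{\rho}^{-1/2}$. If $W,\xi_1,\dots,\xi_d$ are i.i.d.\ $\Gauss(0,1)$ and $X_i = \sqrt{\rho}\,W + \sqrt{1-\rho}\,\xi_i$, then $X \sim \Gauss(\mathbf 0,\Sigma_d(\rho))$, and given $W = w$ the $X_i$ are independent with $\bbP(X_i \ge 0 \mid W = w) = \Phi(cw)$. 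Consequently
\[
\bbP(X_1 \ge 0,\dots,X_d \ge 0) = \int_{\bbR} \phi(w)\,\Phi(cw)^d\,dw ,
\]
and the slab event factors similarly after conditioning on $W$. This collapses both statements to Laplace-type integrals of $\phi(w)$ against a monotone power of $\Phi(cw)$, which I will control by splitting the $w$-axis at the scale $w \asymp \sqrt{2\bar{\rho}\log d}$ dictated by the balance between the Gaussian weight and $\Phi(cw)^{d}$.

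For the upper bound I would condition on $W = w$, bound the conditional slab probability $\bbP(0 \le X_1 < \delta \mid W=w)$ by its length $\delta/\sqrt{1-\rho}$ times the supremum of $\phi$ over the slab (within a constant factor of $\phi(cw)$ for the relevant large $w$), and bound the remaining orthant factor by $\Phi(cw)^{d-1}$. Since $\phi(w)\phi(cw) = (2\pi)^{-1}\exp\{-w^2/(2(1-\rho))\}$, the numerator is at most a constant times $\delta\int_{\bbR}\exp\{-w^2/(2(1-\rho))\}\,\Phi(cw)^{d-1}\,dw$. I would split this at $w_0 = \{2(1-\alpha)\bar{\rho}\log(d-1)\}^{1/2}$. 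On $w \ge w_0$ I use $\Phi(cw)^{d-1}\le 1$ and the Gaussian tail bound $\int_{w_0}^\infty \exp\{-w^2/(2(1-\rho))\}\,dw \lesssim w_0^{-1}\exp\{-w_0^2/(2(1-\rho))\}$; since $w_0^2/(2(1-\rho)) = (1-\alpha)\rho^{-1}\log(d-1)$, this reproduces the main term $\delta\,\{2(1-\alpha)\bar{\rho}\log(d-1)\}^{-1/2}(d-1)^{-(1-\alpha)/\rho}$. On $w \le w_0$ I bound $\Phi(cw)^{d-1}\le \Phi(cw_0)^{d-1}\le \exp\{-(d-1)\bar\Phi(cw_0)\}$; because $cw_0 = \{2(1-\alpha)\log(d-1)\}^{1/2}$ gives $(d-1)\bar\Phi(cw_0)\gtrsim d^{\alpha}/\sqrt{\log d}$, this contribution is super-polynomially small and accounts for the $\exp(-d^\alpha)$ remainder (the sub-polynomial factor inside the exponent being immaterial for later use).

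For the lower bound I would discard all but the region $\{W \ge \tau\}$ and use monotonicity of $w \mapsto \Phi(cw)$ to pull the power factor out at the left endpoint:
\[
\bbP(X_1 \ge 0,\dots,X_d \ge 0) \;\ge\; \int_{\tau}^{\infty}\phi(w)\,\Phi(cw)^d\,dw \;\ge\; \Phi(c\tau)^d\,\bar\Phi(\tau).
\]
The choice of $\tau$ is the whole game. I would take $\tau$ just below $\sqrt{2\bar{\rho}\log d}$, concretely the $\tau$ solving $\bar\Phi(c\tau) = 1/d$, so that $\Phi(c\tau)^d = (1-1/d)^d \ge 1/4$ stays bounded away from $0$, while the Mills lower bound $\bar\Phi(\tau) \ge \tau(\tau^2+1)^{-1}\phi(\tau)$ retains the size $d^{-\bar{\rho}}$. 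Solving the defining relation gives $c\tau = \{2\log d - \log\log d - \log(4\pi) + o(1)\}^{1/2}$, whence $\phi(\tau) = (2\pi)^{-1/2}e^{-\bar{\rho}(c\tau)^2/2}$ is of order $d^{-\bar{\rho}}(\log d)^{\bar{\rho}/2}$ and $\tau \asymp \sqrt{\bar{\rho}\log d}$; thus $\Phi(c\tau)^d\,\bar\Phi(\tau)$ exceeds the target $\{2\bar{\rho}\log d\}^{1/2}(2\bar{\rho}\log d+1)^{-1}d^{-\bar{\rho}}$ by a factor of order $(\log d)^{\bar{\rho}/2}$ for large $d$.

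The main obstacle is matching the exact prefactor $\tfrac{(2\bar{\rho}\log d)^{1/2}}{2\bar{\rho}\log d+1}$ rather than merely its order. Placing the threshold exactly at $\sqrt{2\bar{\rho}\log d}$ is too lossy: there $\Phi(c\tau)^d$ decays and the endpoint bound falls short by a factor $\sqrt{2\pi}$, because the bulk of the integral sits at $w$ slightly \emph{below} $\sqrt{2\bar{\rho}\log d}$; lowering the threshold by the precise $\log\log d$ amount above is what recovers the missing mass, and one must then verify the resulting constant dominates the target. A second, easier regime arises when $\bar{\rho}\log d$ is small, so that $\tau$ is not large and the Mills lower bound is unavailable: there $\bar\Phi(\tau)$ is bounded below by a constant while the target is $\lesssim \sqrt{2\bar{\rho}\log d}\to 0$, so the inequality holds trivially, and the two regimes together cover all $d$. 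On the upper-bound side the only delicate bookkeeping is the placement of $w_0$, where $\alpha \in (0,1)$ serves precisely to separate the polynomially small main term from the super-polynomial remainder.
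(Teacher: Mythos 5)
Your upper bound is essentially the paper's argument: the same one\--factor representation $X_i=\rho^{1/2}w+(1-\rho)^{1/2}W_i$, the same threshold $w_0=\{2(1-\alpha)\bar{\rho}\log(d-1)\}^{1/2}$ for the latent variable, a Gaussian tail estimate producing the main term $\delta\{2(1-\alpha)\bar{\rho}\log(d-1)\}^{-1/2}(d-1)^{-(1-\alpha)/\rho}$, and the complementary event $\max_i W_i\le\{2(1-\alpha)\log(d-1)\}^{1/2}$ giving the $\exp(-d^\alpha)$ remainder. (Your version and the paper's share the same harmless logarithmic slack in that last step, which is absorbed by perturbing $\alpha$.)

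The lower bound is where you diverge, and where there is a genuine gap. The paper writes $\bbP(X\ge\mathbf{0})=\bbE\,g(\max_i W_i)$ with $g(b)=1-\Phi(\bar{\rho}^{1/2}b)$, uses $g(\max_i W_i)\ge g(\max_i|W_i|)$ (monotonicity), applies Jensen's inequality to the convex decreasing $g$ on $[0,\infty)$ to get $\bbE\,g(\max_i|W_i|)\ge g(\bbE\max_i|W_i|)\ge g\{(2\log d)^{1/2}\}=1-\Phi\{(2\bar{\rho}\log d)^{1/2}\}$, and then a single Mills-ratio lower bound at $(2\bar{\rho}\log d)^{1/2}$ produces the stated constant verbatim. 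Your route --- truncating the latent variable at the quantile $\tau$ with $1-\Phi(c\tau)=1/d$ and using $\Phi(c\tau)^d\{1-\Phi(\tau)\}\ge\tfrac14\{1-\Phi(\tau)\}$ --- does give the correct order $d^{-\bar{\rho}}(\log d)^{-1/2}$, but relative to the stated target it carries a prefactor of roughly $\tfrac{1}{4\sqrt{2\pi}}(4\pi\log d)^{\bar{\rho}/2}$, which exceeds $1$ only once $(\log d)^{\bar{\rho}/2}$ beats $4\sqrt{2\pi}$. The two regimes you invoke do not cover the intermediate range: for, say, $\bar{\rho}=1/2$, the ``$\bar{\rho}\log d$ small'' regime requires $d$ essentially equal to $1$, while the ``large $d$'' regime requires $\log\log d\ge 2\log(4\sqrt{2\pi})/\bar{\rho}$, i.e.\ astronomically large $d$. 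So as written your argument proves the lower bound only up to a universal constant --- enough for Theorem \ref{thm:main}, but not the inequality as stated; the Jensen step is the missing idea that closes it cleanly. (A mitigating remark: the paper's auxiliary Mills-ratio bound, Lemma \ref{lem:l4}, omits the factor $(2\pi)^{-1/2}$ from the lower bound, so the constant printed in Lemma \ref{upper_lower_small} is itself too strong by $\sqrt{2\pi}$; part of your difficulty in matching it is an artifact of that slip.)
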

A key aspect of the compound-symmetry structure that we exploit is for $X\sim \m N (\mathbf{0}, \Sigma_d(\rho))$ with $\rho \in (0,1)$, 
we can represent  $X_i \overset{d}= \rho^{1/2}\, w + (1- \rho)^{1/2} \, W_i$, where $w, W_i$'s are independent $\m N(0, 1)$ variables. 

Using Lemma \ref{upper_lower_small}, we can bound, for any $\alpha \in (0, 1)$, 
\begin{align}\label{eq:bdR2} 
R' &\le \frac{ \delta\,\{2\,\bar{\rho}_{\max}\,(1-\alpha)\log (K-1)\}^{-1/2}\, (K-1)^{-(1-\alpha)/\rho_{\max}} + \exp\{-(K-1)^\alpha\}}{2\, \bar{\rho}_{\min}\log K(2\,\bar{\rho}_{\min} \log K+1)^{-2}K^{-2\,\bar{\rho}_{\min}}} \nonumber \\
&\le 2^{(1-\alpha)/\rho_{\max}}\,\delta\,\frac{(2\,\bar{\rho}_{\min}\log K+1)^2}{2\,\bar{\rho}_{\min}\,\log K\{2\,\bar{\rho}_{\max}\,(1-\alpha)\log (K-1)\}^{1/2}}\, K^{-\{(1-\alpha)/\rho_{\max} - 2\,\bar{\rho}_{\min}\}} \nonumber \\
 &~~~+ \exp\{-(K-1)^\alpha\} K^{2\,\bar{\rho}_{\min}} (2\,\bar{\rho}_{\min} \log K+1)^2/ (2\,\bar{\rho}_{\min} \log K)\nonumber \\
 &\le C \,\delta\,(\log K)^{1/2}\,K^{-\{(1-\alpha)/\rho_{\max} - 2\,\bar{\rho}_{\min}\}} + 4 \,\bar{\rho}_{\min} \exp\{-(K-1)^\alpha\} K^{2\,\bar{\rho}_{\min}}\log K,
\end{align}
with $C = 5\bar{\rho}_{\min}/\{(1-\alpha)\bar{\rho}_{\max}\}^{1/2}$. Since $(\rho_{\min}, \rho_{\max}) \in \m Q$, we have $\rho_{\min}/\{2 (1 - \rho_{\min})\} \ge \rho_{\max}$, or equivalently, $2 \bar{\rho}_{\min} < 1/\rho_{\max}$. Thus, we can always find $\alpha > 0$ such that $(1-\alpha)/\rho_{\max} - 2\,\bar{\rho}_{\min} > 0$. Fix such an $\alpha$, and substitute in equation \eqref{eq:bdR2}. The proof is now completed by choosing $K_0$ large enough so that for any $K > K_0$, the second term in the last line of \eqref{eq:bdR2} is smaller than the first; this is possible since the second term decreases exponentially while the first does so polynomially in $K$.

\section{Connections with Bayesian constrained inference}\label{sec:band}
In this section, we connect the theoretical findings in the previous section to posterior inference in Bayesian constrained regression models. We work under the setup of a usual Gaussian regression model, 
\begin{align}\label{eq:model}
y_i = f(x_i) + \epsilon_i,  \quad  \epsilon_i  \sim \m N(0, \sigma^2) \quad (i = 1,\dots, n),
\end{align}
where we assume $x_i \in [0, 1]$ for simplicity. We are interested in the situation when the regression function $f$ is constrained to lie in some space $\m C_f$ which is a subset of the space of all continuous functions on $[0, 1]$, determined by linear restrictions on $f$ and possibly its higher-order derivatives. Common examples include bounded, monotone, convex, and concave functions. 

As discussed in the introduction, a general approach is to expand $f$ in some basis $\{\phi_j\}$ as $f(\cdot) = \sum_{j=1}^N \theta_j \phi_j(\cdot)$ so that the restrictions on $f$ can be posed as linear restrictions on the vector of basis coefficients $\theta \in \mb R^N$, with the parameter space $\m C$ for $\theta$ of the form $\m C = \{\theta \in \mb R^N\,:\, A \theta \ge b\}$. For example, when $\m C_f$ corresponds to monotone increasing functions, the set $\m C$ is of the form $\{\theta_1 \le \theta_2 \ldots \le \theta_N\}$ under the Bernstein polynomial basis \citep{curtis2009variable} and $[0, \infty)^N$ under the integrated triangular basis of \cite{maatouk2017gaussian}. For sake of concreteness, we shall henceforth work with $\m C = [0, \infty)^N$. 
Under such a basis representation, the model \eqref{eq:model} can be expressed as
\begin{align}\label{model_tmvn}
 Y &= \Phi \theta + \epsilon, \quad \epsilon \sim \m N({\bf 0}, \mr I_n),  \quad \theta \in \m C,%
\end{align}
where $Y = (y_1, \ldots, y_n)^\T$ and $\Phi = \{\phi_j(x_i)\}_{ij}$ is an $n\times N$ basis matrix. 

The truncated normal prior $\theta \sim \m N_{\m C}(\mathbf{0}, \Omega_N)$ is conjugate, with the posterior $\theta \mid Y \sim \m N_{\m C}(\mu_N, \Sigma_N)$, with 
$$
\mu_N =  \Sigma_N\Phi^\T Y, \quad \Sigma_N= (\Omega_N^{-1}+\Phi^\T\Phi)^{-1}. 
$$
To motivate their prior choice, \cite{maatouk2017gaussian} begin with an unconstrained mean-zero Gaussian process prior on $f$, $f \sim \textsc{gp}(0, K)$, with covariance kernel $K$. Since their basis coefficients correspond to evaluation of the function and its derivatives at the grid points; see Appendix \ref{sec:app_posterior} for details; this induces a multivariate zero-mean Gaussian prior $\m N(\mathbf{0}, \Sigma_N)$ on $\theta$ provided the covariance kernel $K$ of the parent Gaussian process is sufficiently smooth. Having obtained this unconstrained Gaussian prior on $\theta$, \cite{maatouk2017gaussian} multiply it with the indicator function $\ind_{\m C}(\theta)$ of the truncation region to obtain the truncated normal prior. 

We are now in a position to connect the posterior bias in Figures \ref{fig-cgp} and \ref{fig-cgp1} to the mass-shifting phenomenon characterized in the previous section. Since the posterior $\theta \mid Y \sim \m N_{\m C}(\mu_N, \Sigma_N)$, a draw from the posterior can be represented as 
$$
\theta = \mu_N + \theta_c, \quad \theta_c \sim \m N_{\m C}(0, \Sigma_N). 
$$
Consider an extreme scenario when the true function is entirely flat. In this case, the optimal parameter value $\theta_0 = \mathbf{0}_N$ and under mild assumptions, $\mu_N$ is concentrated near the origin with high probability under the true data distribution; see \S \ref{sec:mean} of the supplementary material. The mass shifting phenomenon pushes $\theta_c$ away from the origin, resulting in the bias. On the other hand, when the true function is strictly monotone as in the left panel of Figure \ref{fig-cgp}, all the entries of $\mu_N$ are bounded away from zero, which masks the effect of the shift in $\theta_c$. 

In strict technical terms, our theory is not directly applicable to $\theta_c$ since the scale matrix $\Sigma_N$ is a dense matrix in general. However, we show below that $\Sigma_N$ is approximately banded under mild conditions. Figure \ref{fig:band} shows image plots of $\Sigma_N$ for three choices of $N$ using the basis of \cite{maatouk2017gaussian} and sample size $n = 500$. In all cases, $\Sigma_N$ is seen to have a near-banded structure.   

\begin{figure}[h]
	\centering	
\begin{multicols}{3}
  \includegraphics[scale = 0.048]{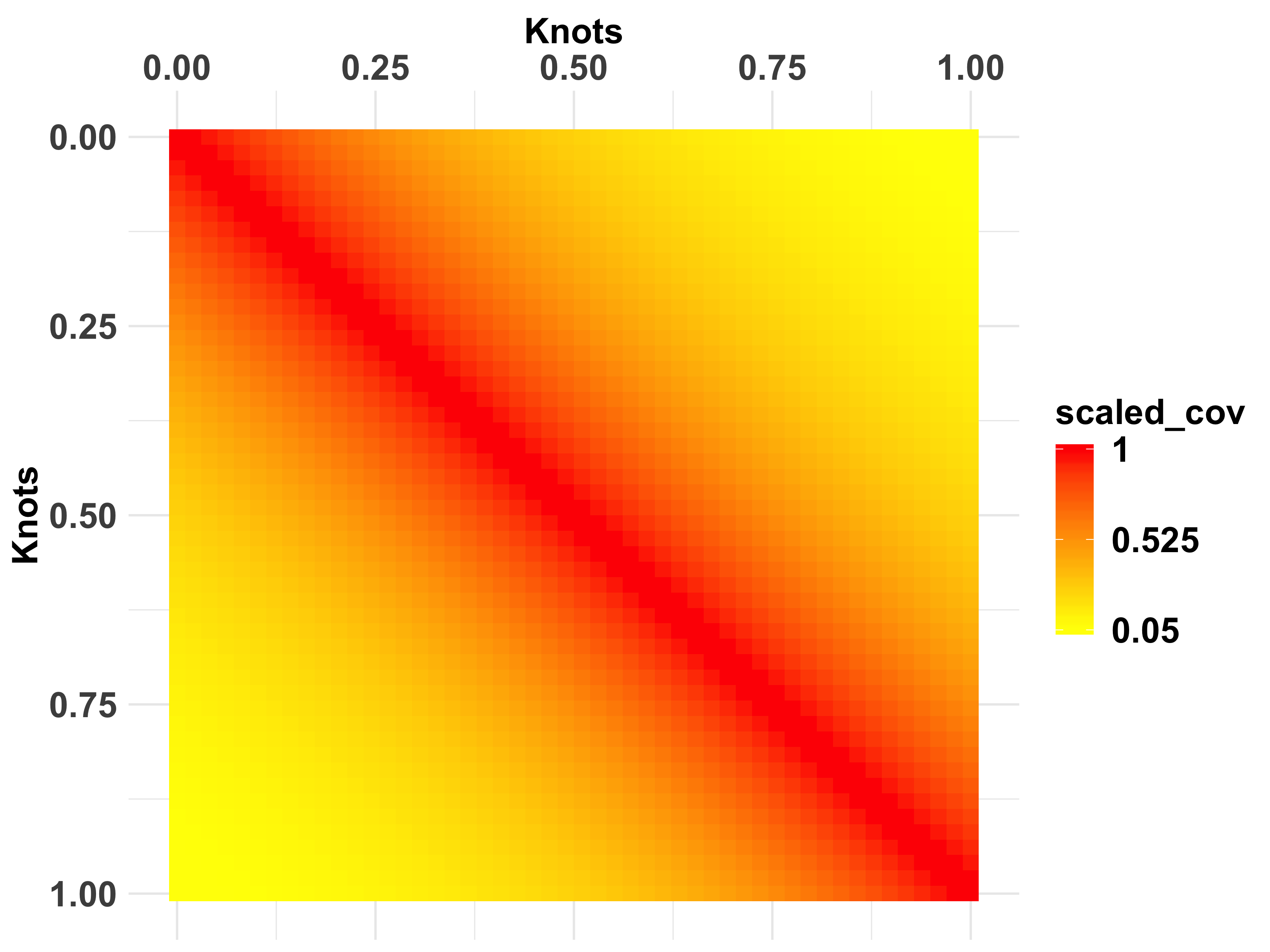}
   \includegraphics[scale = 0.048]{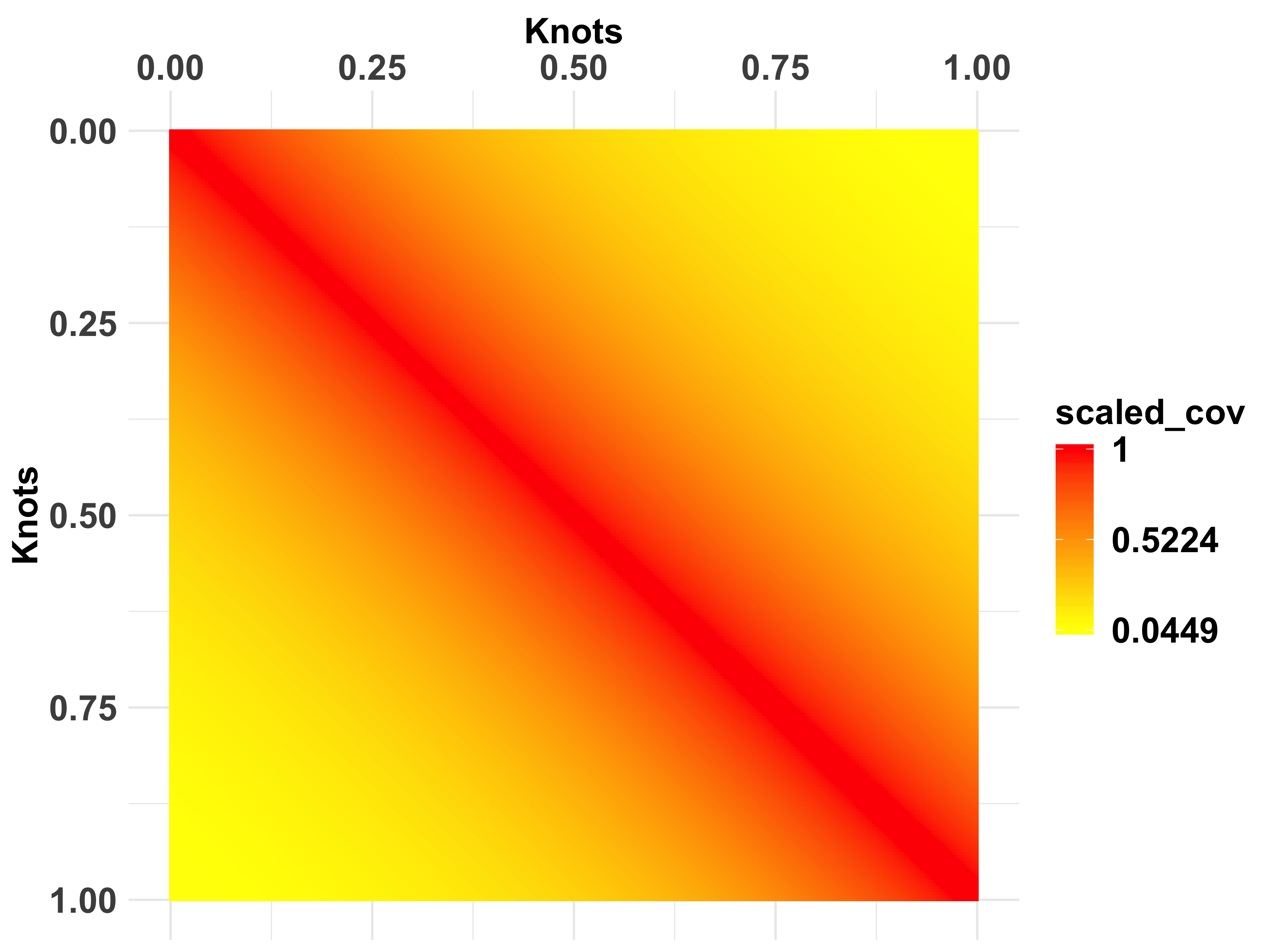}
   \includegraphics[scale = 0.048]{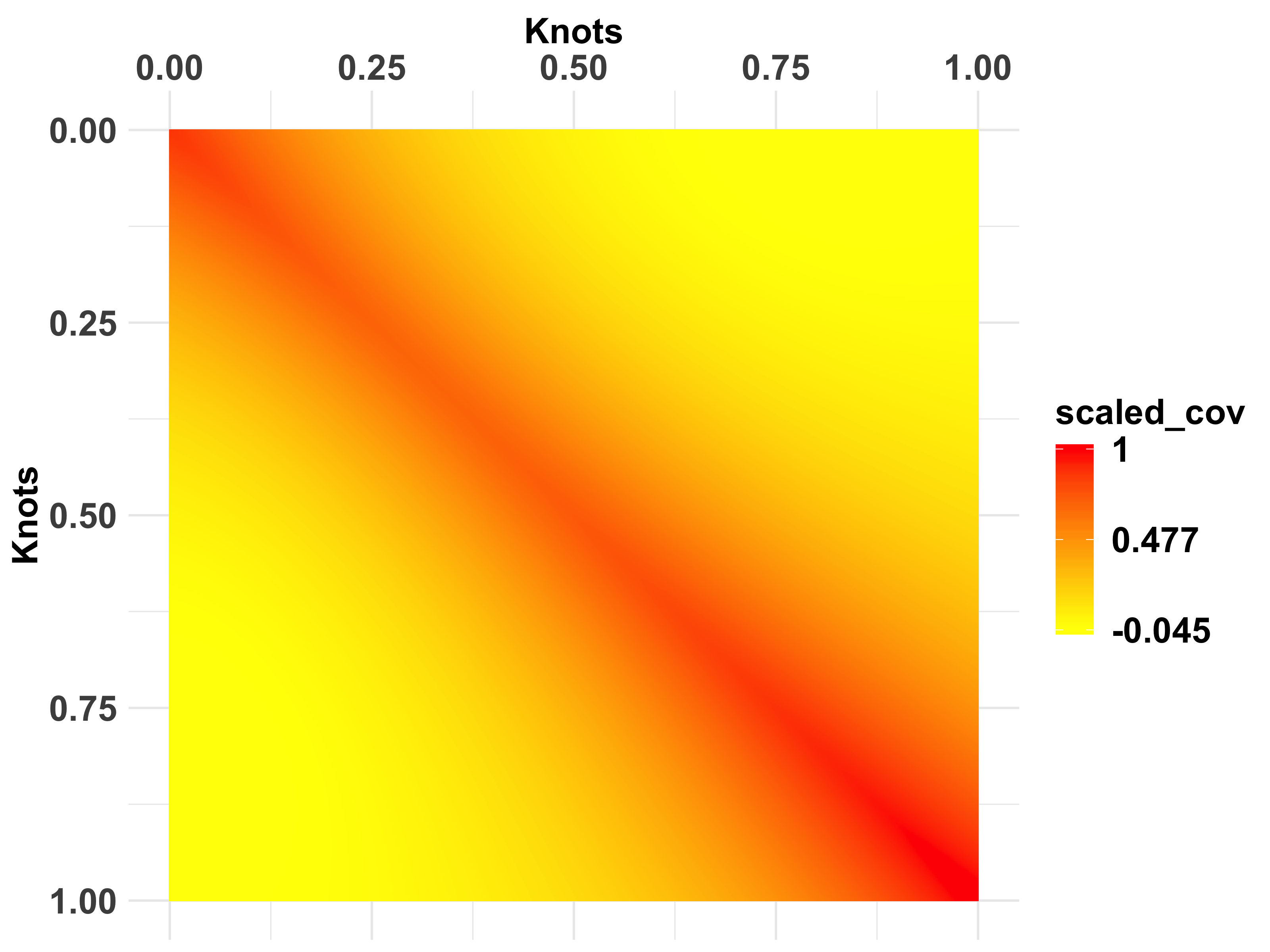}
\end{multicols}
        \caption{Scaled posterior scale matrix $\widetilde{\Sigma}_N$ of dimension $N = 50$ (left), $N = 250$ (middle) and $N=500$ (right).}
	\label{fig:band}
\end{figure}

We make this empirical observation concrete below. We first state the assumptions on the basis matrix $\Phi$ and prior covariance matrix $\Omega_N$ that allows the construction of a strictly banded matrix approximation.
\begin{assumption} \label{ass:basis}
We assume the basis matrix $\Phi$ is such that the matrix $\Phi^\T\,\Phi$ is $q$-banded for some $2\le q \le N$; also there exists constants  $0< C_1<C_2 <\infty$ such that 
\begin{align*}
C_1\,(n/N)\,I_N \le \Phi^\T\Phi \le C_2\,(n/N)\,I_N.
\end{align*}   
\end{assumption}
One example of a basis satisfying Assumption \ref{ass:basis} is a B-Spline of fixed order $q$ denoted as $B_{N,q}(x)$ with $N=J+q$ over quasi-uniform knot points of number $J>0$; see, for example, \cite{yoo2016supremum}. 

Regarding the prior covariance matrix, we first define a uniform class of symmetric positive definite well-conditioned matrices \citep{bickel2008regularized} as
\begin{align} \label{wellcond}
\m M(\lambda_0, \alpha, k) = \Big\{ \Omega_N\,:\,&\max_j \sum_i \{|\Sigma_{ij}|\,:\,|i-j|>k\} \le C\,k^{-\alpha}\, \mbox{for all} \,k>0,  \\ \nonumber
~~~~~~~~~~~~~~~~~~&\mbox{and} \, 0 <\lambda_0 \le \lambda_{\min} (\Omega_N) \le \lambda_{\max} (\Omega_N) \le 1/\lambda_0 \Big\}.
\end{align}

\begin{assumption} \label{ass:band}
We assume the prior covariance matrix $ \Omega_N \in \m M(\lambda_0, \alpha, k)$ defined in \eqref{wellcond}.
\end{assumption}
Assumption \ref{ass:band} ensures the covariance matrix is ``approximately bandable", which is common in covariance matrix estimation with thresholding techniques \citep{bickel2008covariance,bickel2008regularized}. 

Given above Assumptions, we are now ready to give the approximation result of posterior scale matrix $\Omega^{-1}$ to a banded symmetric positive definite matrix.
\begin{proposition}\label{prop:app}
For the posterior scale matrix $\Sigma_N = (\Omega_N^{-1}+\Phi^\T\Phi)^{-1}$ with $\Phi$ satisfying Assumption \ref{ass:basis} and $\Omega_N$ satisfying Assumption \ref{ass:band}, for sufficiently small $0<\epsilon<1/\lambda_0$ there exists $r \succsim \log(1/\epsilon)$, and for sufficiently large $n_0$ we can always find a $\max(n^2_0r,n_0q)$-banded, symmetric and positive definite matrix $\Sigma'_N$ such that
\begin{align}\label{dist}
\|\Sigma_N - \Sigma'_N\| \precsim \delta_{\epsilon,\kappa},
\end{align}
where $\delta_{\epsilon,\kappa} = (\epsilon+\kappa^{n_0+1})\max\{(N/n),(N/n)^2\}$ and $0< \kappa <1$ is a fixed constant. 
\end{proposition}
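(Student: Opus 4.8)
The plan is to replace the dense posterior precision $M = \Omega_N^{-1} + \Phi^\T\Phi$ by a genuinely banded, uniformly well-conditioned surrogate $\hat M$, to invert that surrogate approximately through a truncated Neumann series, and to control every discrepancy in operator norm. The argument thus splits into banding the prior precision $\Omega_N^{-1}$, banding the inverse of the resulting surrogate, and adding up the errors.

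First I would hard-threshold $\Omega_N$ to its $n_0$-banded truncation $\tilde\Omega$. The tail-sum decay in \eqref{wellcond} (Assumption~\ref{ass:band}) gives, by a Gershgorin-type row-sum bound, $\norm{\Omega_N - \tilde\Omega}\precsim n_0^{-\alpha}$, and for $n_0$ large $\tilde\Omega$ inherits positive definiteness and the spectral window $[\lambda_0, 1/\lambda_0]$ up to a vanishing perturbation, so its condition number is bounded free of $N$. Since $\tilde\Omega$ is \emph{exactly} banded and uniformly well-conditioned, classical exponential-decay bounds for entries of inverses of banded positive-definite matrices yield $|(\tilde\Omega^{-1})_{ij}|\precsim \lambda^{|i-j|}$ for some $\lambda=\lambda(\lambda_0,n_0)\in(0,1)$. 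Truncating $\tilde\Omega^{-1}$ to a band produces an SPD, $n_0 r$-banded matrix $G$ with $\norm{\tilde\Omega^{-1}-G}\precsim\lambda^{\,r}\precsim\epsilon$ as soon as $r\succsim\log(1/\epsilon)$; combined with the resolvent estimate $\norm{\Omega_N^{-1}-\tilde\Omega^{-1}}\le\norm{\Omega_N^{-1}}\,\norm{\tilde\Omega^{-1}}\,\norm{\Omega_N-\tilde\Omega}\precsim n_0^{-\alpha}$ and the compatibility choice $\epsilon\gtrsim n_0^{-\alpha}$, this gives $\norm{\Omega_N^{-1}-G}\precsim\epsilon$.

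Next I would set $\hat M = G + \Phi^\T\Phi$, which is $\max(n_0 r, q)$-banded by Assumption~\ref{ass:basis}; Weyl's inequality gives $\lambda_{\min}(\hat M)\succsim \lambda_0 + C_1(n/N)$ and $\lambda_{\max}(\hat M)\precsim 1/\lambda_0 + C_2(n/N)$, so $\hat M$ is well-conditioned with constants free of $N$. Taking $a=\lambda_{\max}(\hat M)$ and $T = I - a^{-1}\hat M$ makes $T$ symmetric positive semidefinite with $\norm{T}\le\kappa:=1-\lambda_{\min}(\hat M)/\lambda_{\max}(\hat M)<1$, so the order-$n_0$ partial Neumann sum
\[
\Sigma'_N = a^{-1}\sum_{j=0}^{n_0} T^{\,j}
\]
is SPD (each eigenvalue is a positive partial geometric sum) and, because $T$ is $\max(n_0 r,q)$-banded and $T^{\,j}$ is $j\max(n_0 r,q)$-banded, is $\max(n_0^2 r, n_0 q)$-banded. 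Its truncation error is $\norm{\hat M^{-1}-\Sigma'_N}\le \kappa^{n_0+1}/\lambda_{\min}(\hat M)$. Finally, the resolvent identity gives $\norm{M^{-1}-\hat M^{-1}}\le \norm{M^{-1}}\,\norm{\hat M^{-1}}\,\norm{\Omega_N^{-1}-G}$, and since $\lambda_{\min}(M),\lambda_{\min}(\hat M)\succsim \lambda_0+C_1(n/N)$ we have $\norm{M^{-1}}\,\norm{\hat M^{-1}}\precsim \min\{1/\lambda_0,\,N/n\}^2\precsim\max\{N/n,(N/n)^2\}$ in both the $n\gg N$ and $N\gg n$ regimes; adding this to the Neumann tail yields $\norm{\Sigma_N-\Sigma'_N}\precsim(\epsilon+\kappa^{n_0+1})\max\{N/n,(N/n)^2\}=\delta_{\epsilon,\kappa}$.

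The hard part will be the banding of the prior precision: $\Omega_N$ is only \emph{polynomially} bandable, whereas the logarithmic bandwidth $r\succsim\log(1/\epsilon)$ demands the \emph{exponential} off-diagonal decay available only for inverses of exactly banded, uniformly well-conditioned matrices. Bridging this gap rests entirely on the surrogate $\tilde\Omega$: one must verify that hard-thresholding preserves positive definiteness and an $N$-free condition number (so the decay rate $\lambda$ is uniform in $N$), and one must reconcile the polynomial truncation error $n_0^{-\alpha}$ with the target $\epsilon$ through a compatible choice of $(n_0,\epsilon)$. The remaining steps—keeping the Neumann partial sum positive definite (which forces $a\ge\lambda_{\max}(\hat M)$) and tracking the multiplicative bandwidth growth to the exact $\max(n_0^2 r, n_0 q)$—are routine once this reconciliation is secured.
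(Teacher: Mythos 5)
Your proposal is correct and follows the same overall architecture as the paper's proof (band the prior, form a banded surrogate of the posterior precision by adding $\Phi^\T\Phi$, approximate its inverse by a truncated Neumann series, and assemble the errors with the resolvent identity and the eigenvalue bounds $\lambda_{\min},\lambda_{\max}\asymp \mathrm{const}+n/N$ from Assumption \ref{ass:basis}), but it differs in how the banded approximation to $\Omega_N^{-1}$ is produced. The paper applies Lemma \ref{band_inv} (the Bickel--Lindner Neumann-series lemma) \emph{twice}: once to invert the $r$-banded truncation $\Omega_{N,r}$ of the prior covariance, and once more to invert $\widetilde{\Sigma}^{-1}=\widetilde{\Omega}^{-1}+\Phi^\T\Phi$; the two truncation indices $n_1,m_1$ are then collapsed into $n_0=\min\{n_1,m_1\}$ and the two contraction factors into a single $\kappa$. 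You instead invoke the Demko--Moss--Smith exponential off-diagonal decay of inverses of exactly banded, well-conditioned matrices and truncate $\tilde\Omega^{-1}$ directly, reserving the Neumann series for the final inversion only. Your route buys something real: the class $\m M(\lambda_0,\alpha,k)$ is only \emph{polynomially} bandable, so truncating $\Omega_N$ to bandwidth $r$ gives error of order $r^{-\alpha}$, and the paper's claim that $r\succsim\log(1/\epsilon)$ suffices for $\|\Omega_N-\Omega_{N,r}\|\le\epsilon$ is not justified as written (it would require $r\succsim\epsilon^{-1/\alpha}$). By decoupling the polynomial covariance-truncation error (absorbed into the requirement $n_0^{-\alpha}\precsim\epsilon$, i.e.\ $n_0$ sufficiently large) from the genuinely exponential decay of the inverse of the banded surrogate, you obtain the logarithmic bandwidth $r\succsim\log(1/\epsilon)$ honestly, at the cost of having to verify (via Weyl's inequality, as you note) that both hard-thresholding steps preserve positive definiteness and an $N$-free condition number. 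Your bandwidth bookkeeping ($\max(n_0^2 r, n_0 q)$ after $n_0$ Neumann terms) and the $\max\{N/n,(N/n)^2\}$ factor from $\|\Sigma_N\|\,\|\hat M^{-1}\|$ match the paper's two-regime analysis.
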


Proposition \ref{prop:app} states under mild conditions we can always construct a banded positive definite matrix that approximates $\Sigma_N$ in operator norm.  Applying the result in Theorem \ref{thm:main} to a truncated normal distribution with the banded approximation of the posterior scale matrix, the marginal density would present a mass-shifting behavior. If we control the band width $K$ such that the approximation is close enough to the posterior scale matrix, the marginal posterior distribution would be expected to behave similarly and shift its probability mass away from the origin and the probability mass over the ``corner region" will decrease to zero as the dimension $N$ goes to infinity. This helps explain the bias occurred in the posterior mean over the flat area shown in the Figure \ref{fig-cgp}.

\section{A de-biasing remedy based on a shrinkage prior}\label{sec:meth}

As concluded in previous sections, we view the mass-shifting behavior of the posterior marginals causing the bias in posterior estimation for flat functions. In this section, we will provide empirical evidences that a simple modification to the truncated normal prior can alleviate the issues related to such mass-shifting phenomenon. Among remedies proposed in the literature, \cite{curtis2009variable} proposed independent shrinkage priors on the parameter vector $\{u_k\}$ given by a mixture of a point-mass at zero and a univariate normal distribution truncated to the positive real line as an alternative to the truncated normal prior,
$$
u_k \sim (1 - \pi) \delta_0 + \pi \m N_+(\mu, \sigma^2). 
$$
Similar mixture priors were also previously used by \cite{neelon2004bayesian} and \cite{dunson2005bayesian}. The mass at zero allows positive prior probability to functions having exactly flat regions. Although possible in principle, introduction of such point-masses while retaining the dependence structure between the coefficients becomes somewhat cumbersome in addition to being computationally burdensome. With such motivation and the additional consideration that in most real scenarios a function is approximately flat in certain regions, we propose a shrinkage procedure as a remedy to replace the coefficients $\theta \in \m C$ by $\xi = (\xi_1, \ldots, \xi_N)^\T$, where
\begin{align}\label{shrink_eq}
\xi_j = \tau \, \lambda_j \, \theta_j,  \quad (j = 1, \ldots, N)
\end{align}
The parameter $\tau$ provides global shrinkage towards the origin while the $\lambda_j$s provide coefficient-specific deviation. We consider default \citep{carvalho2010horseshoe} half-Cauchy priors $\m C_{+}(0,1)$ on $\tau$ and the $\lambda_j$s independently. The $\mc C_{+}(0,1)$ distribution has a density proportional to $(1+t^2)^{-1} \mathbbm{1}_{(0, \infty)}(t)$. We continue to use a dependent truncated normal prior $\theta \sim \m N_{\m C}(\mathbf{0}_N, \Sigma_N)$ which in turn induces dependence among the $\xi_j$s. Our prior on $\xi$ can thus be considered as a dependent extension of the global-local shrinkage priors \citep{carvalho2010horseshoe} widely used in the high-dimensional regression context. Figure \ref{prior-draw} in \S \ref{app:prior-draw} of the supplementary material shows prior draws for the first and third components of both $\theta$ and $\xi$, based on which the marginal distribution of the $\xi_j$s is clearly seen to place more mass near the origin while retaining heavy tails.

We provide an illustration of the proposed shrinkage procedure in the context of estimating monotone functions as described in \eqref{eq:mc}. The procedure can be readily adapted to include various other constraints. Replacing $\theta$ by $\xi$ in (M) in \eqref{eq:mc}, 
we can write \eqref{eq:model} in vector notation as 
\begin{eqnarray}\label{model}
Y = \xi_0 \mathbf{1}_n + \tau \, \Psi \Lambda \theta + \varepsilon, \quad \varepsilon \sim \mc N_n(\mathbf{0}_n, \sigma^2 \mr I_n).
\end{eqnarray}
Here, $\Psi$ is an $n \times N$ basis matrix with $i^{th}$ row $\Psi_i^\T$ where 
$\Psi_{ij} = \psi_{j-1}(x_i)$ for $j=1, \ldots, N$ and the basis functions $\psi_j$ are as in \eqref{eq:mc}. 
Also, 
$Y = (y_1, \ldots, y_n)^\T$, $\Lambda = \mbox{diag}(\lambda_1, \ldots, \lambda_{N})$ and $\varepsilon = (\epsilon_1, \ldots, \epsilon_n)^\T$.

The model is parametrized by $\xi_0 \in \mb R$, $\theta = (\theta_1, \ldots, \theta_N)^\T \in \mc C$, $\lambda = (\lambda_1, \ldots, \lambda_N)^\T \in \mc C$, $\sigma \in \mb R^+$ and $\tau \in \mb R^+$. We place a flat prior $\pi(\xi_0) \propto 1$ on $\xi_0$.  We place a truncated normal prior $\m N_{\m C}(\mathbf{0}_N, \Sigma_N)$ on $\theta$ independently of $\xi_0$, $\tau$ and $\lambda$ with 
$$
\Sigma_N = (\Sigma_{jj'}), \quad \Sigma_{jj'} = k(u_j - u_{j'}), \quad u_j = j/(N-1), \quad (j = 0, 1, \ldots, N-1)  
$$
and $k(\cdot)$ is the stationary Mat{\'e}rn kernel with smoothness parameter $\nu > 0$ and length-scale parameter $\ell > 0$. To complete the prior specification, we place improper prior $\pi (\sigma^2) \propto 1/\sigma^2$ on $\sigma^2$ and compactly supported priors $\nu \sim \mc U(0.5,1)$ and $\ell \sim \mc U(0.1,1)$ on $\nu$ and $\ell$. 
We develop a data-augmentation Gibbs sampler which combined with the 
embedding technique of \cite{ray2019efficient} results in an efficient MCMC algorithm to sample from the joint posterior of $(\xi_0, \theta, \lambda, \sigma^2, \tau^2, \nu, \ell)$; the details are deferred to \S \ref{post-sam} of the supplementary material. 
	


We conduct a small-scale simulation study to illustrate the efficacy of the proposed shrinkage procedure.  We consider model \eqref{eq:model} with true $\sigma = 0.5$ and two different choices of the true $f$, namely, 
$$
f_1(x) = (5x - 3)^3 \,\ind_{[0.6, 1]}(x), \quad f_2(x) = \sqrt{2} \, \sum_{l=1}^{100} l^{-1.7} \, \sin(l) \, \cos(\pi (l-0.5) (1-x))
$$
for $x \in [0, 1]$. The function $f_1$, which is non-decreasing and flat between $0$ and $0.6$, was used as the motivating example in the introduction. The function $f_2$ is also approximately flat between $0.7$ and $1$, although it is not strictly non-decreasing in this region, which allows us to evaluate the performance under slight model misspecification. 

To showcase the improvement due to the shrinkage, we consider a cascading sequence of priors beginning with only a truncated normal prior and gradually adding more structure to eventually arrive at the proposed shrinkage prior. Specifically, the variants considered are \\
{ \em No shrinkage and fixed hyperparameters:} Here, we set $\Lambda = \mr I_N$ and $\tau = 1$ in \eqref{model}, and also fix $\nu$ and $\ell$, so that we have a truncated normal prior on the coefficients. This was implemented as part of the motivating examples in the introduction. We fix $\nu = 0.75$ and $\ell$ so that the correlation $k(1)$ between the maximum separated points in the covariate domain equals $0.05$. \\
{\em No shrinkage with hyperparameter updates:} The only difference from the previous case is that $\nu$ and $\ell$ are both assigned priors described previously and updated within the MCMC algorithm. \\
{\em Global shrinkage:} We continue with $\Lambda = \mr I_N$ and place a half-Cauchy prior on the global shrinkage parameter $\tau$. The hyperparameters $\nu$ and $\ell$ are updated. \\
{\em Global-local shrinkage:} This is the proposed procedure where the $\lambda_j$s are also assigned half-Cauchy priors and the hyperparameters are updated. 

We generate $500$ pairs of response and covariates and randomly divide the data into $300$ training samples and $200$ test samples. For all of the variants above, we set the number of knots $N = 150$. 
We provide plots of the function fit along with pointwise $95 \%$ credible intervals in Figures \ref{shrink-f1} and \ref{shrink-f2} respectively, and also report the mean squared prediction error (\textsc{mspe}) at the bottom of the sub-plots. As expected, only using the truncated normal prior leads to a large bias in the flat region. Adding some global structure to the truncated normal prior, for instance, updating the \textsc{gp} hyperparamaters and adding a global shrinkage term improves estimation around the flat region, which however still lacks the flexibility to transition from the flat region to the strictly increasing region. The global-local shrinkage performs the best, both visually and also in terms of \textsc{mspe}. 

Additionally, the shrinkage procedure performed at least as good as bsar, a very recent state-of-the-art method, developed by \cite{lenk2017bayesian}, and implemented in the \textbf{{\fontfamily{qcr} \selectfont R}} package \textbf{{\fontfamily{qcr} \selectfont bsamGP}}. For the out-of-sample prediction performance of bsar, refer to Figure \ref{bsar3} in \S \ref{bsar-plot} of the supplementary material, based on which it is clear that the performance of global-local shrinkage procedure is comparable with that of bsar. It is important to point out that bsar is also a shrinkage based method that allows for exact zeros in the coefficients in a transformed Gaussian process prior through a spike and slab specification.

\begin{figure}[htbp!]
	\centering
	\includegraphics[width=\textwidth]{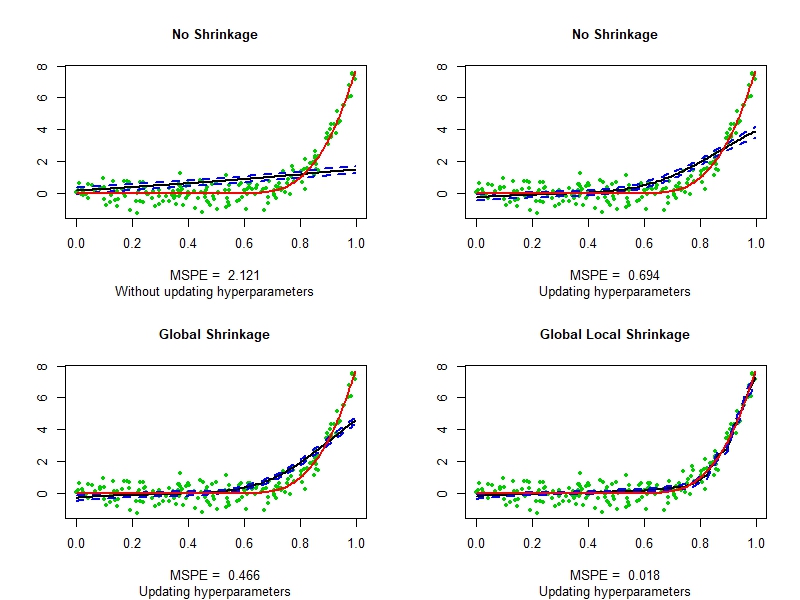}
	\caption{{\em Out-of-sample prediction accuracy for $f_1$ using the four variants. Red solid curve corresponds to the true function, black solid curve is the mean prediction, the region within two dotted blue curves represent 95\% pointwise prediction Interval and the green dots are $200$ test data points. MSPE values corresponding to each of the method are also shown in the plots.}}
	\label{shrink-f1}
\end{figure}

\begin{figure}[htbp!]
	\centering
	\includegraphics[width=\textwidth]{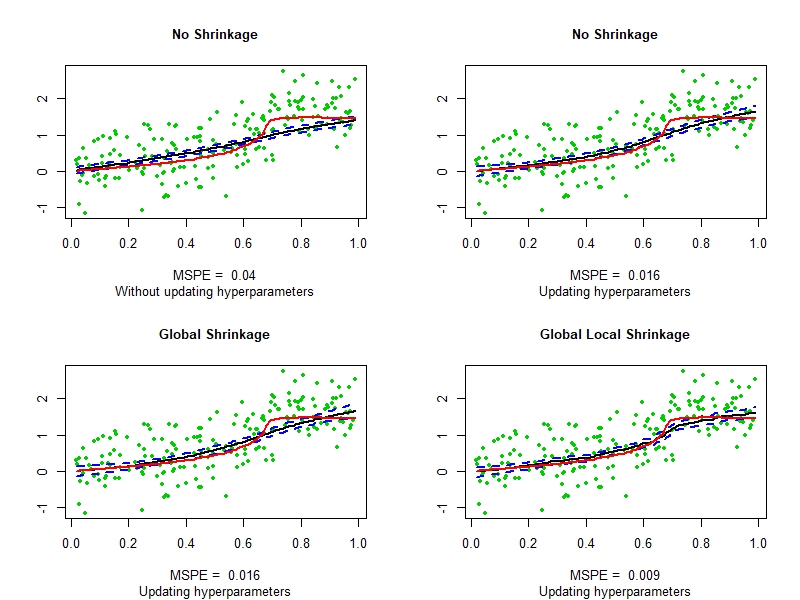}
	\caption{{\em Same as Figure \ref{shrink-f1}, now for the function $f_2$.}}
	\label{shrink-f2}
\end{figure}



\section{Discussion}\label{sec:disc}
A seemingly natural way to define a prior distribution on a constrained parameter space is to consider the restriction of a standard unrestricted prior to the constrained space. The conjugacy properties of the unrestricted prior typically carry over to the restricted case, facilitating computation. Moreover, reference priors on constrained parameters are typically the unconstrained reference prior multiplied by the indicator of the constrained parameter space \citep{sun1998reference}. Despite these various attractive properties, the findings of this article pose a caveat towards routine truncation of priors in moderate to high-dimensional parameter spaces, which might lead to biased inference. This issue gets increasingly severe with increasing dimension due to the concentration of measure phenomenon \citep{talagrand1995concentration,boucheron2013concentration}, 
which forces the prior to increasingly concentrate away from statistically relevant portions of the parameter space. A somewhat related issue with certain high-dimensional shrinkage priors has been noted in \cite{bhattacharya2016suboptimality}. 
Overall, our results suggest a careful study of the geometry of truncated priors as a useful practice. Understanding the cause of the biased behavior also suggests natural shrinkage procedures that can guard against such unintended consequences. We note that post-processing approaches based on projection \citep {lin2014bayesian} and constraint relaxation \citep{duan2020bayesian} do not suffer from this unintended bias. The same is also true for the recently proposed monotone \textsc{bart} (Bayesian Additive Regression Trees) \citep{chipman2010bart} method. 

It would be interesting to explore the presence of similar issues arising from truncations beyond the constrained regression setting. Possible examples include correlation matrix estimation and simultaneous quantile regression. Priors on correlation matrices are often prescribed in terms of constrained priors on covariance matrices, and truncated normal priors are used to maintain ordering between quantile functions corresponding to different quantiles, and this might leave the door open for unintended bias to creep in. 



\appendix
\section*{Appendix}
\section{Basis representation of \cite{maatouk2017gaussian}}\label{sec:app_posterior}
As our example which motivates the main results of this paper, we consider the more recent basis sequence of \cite{maatouk2017gaussian}. Let $u_j = j/(N-1), j = 0, 1, \ldots, N-1$ be equally spaced points on $[0, 1]$, with spacing $\delta_N = 1/(N-1)$. Let, 
\begin{align*}
h_j(x) = h\bigg(\frac{x-u_j}{\delta_N}\bigg), \quad \psi_j (x) = \int_0^x h_j(t)\,dt, \quad \phi_j (x) = \int_0^x \int_0^t h_j(u)\,dudt,
\end{align*} 
for $ j = 0, 1, \dots, N-1$, where $h(x) = (1-|x|)\, \mathbbm{1}_{[-1, 1]}(x)$ is the ``hat function" on $[-1, 1]$. For any continuous function $f: [0, 1] \to \mb R$, the function $\widetilde{f}(\cdot)
= \sum_{j=0}^{N-1} f(u_j) \, h_j(\cdot)$ approximates $f$ by linearly interpolating between the function values at the knots $\{u_j\}$, with the quality of the approximation improving with increasing $N$. With no additional smoothness assumption, this suggests a model for $f$ as $f(\cdot) = \sum_{j=0}^{N-1} \theta_{j+1} h_j(\cdot)$.

The basis $\{\psi_j\}$ and $\{\phi_j\}$ take advantage of higher-order smoothness. If $f$ is once or twice continuously differentiable respectively, then by the fundamental theorem of calculus, 
\begin{align*}
f(x) - f(0) = \int_0^x f{'}(t) dt, \quad f(x) - f(0) -  x f'(0) = \int_0^x \int_0^t f''(s) \, ds dt. 
\end{align*}
Expanding $f'$ and $f''$ in the interpolation basis as in the previous paragraph respectively imply the models
\begin{align}\label{eq:mc}
\underbrace{ f(x) = \theta_0 + \sum_{j=0}^{N-1} \theta_{j+1} \psi_j(x)}_{M}, \quad \underbrace{f(x) = \theta_0 + \theta^\ast x + \sum_{j=0}^{N-1} \theta_{j+1} \phi_j(x) }_{C}. 
\end{align}
Under the above, the coefficients have a natural interpretation as evaluations of the function or its derivatives at the grid points. For example, under (M), $f'(u_j) = \theta_{j+1}$ for $j = 0, 1, \ldots, N-1$, while under (C), $f''(u_j) = \theta_{j+1}$ for $j = 0, 1, \ldots, N-1$. 

\cite{maatouk2017gaussian} showed that under the representation (M) in \eqref{eq:mc}, $f$ is monotone non-decreasing {\em if and only if} $\theta_i \ge 0$ for all $i = 1, \ldots, N$. Similarly, under (C), $f$ is convex non-decreasing {\em if and only if} $\theta_i \ge 0$ for all $i = 1, \ldots, N$. The ability to {\em equivalently} express various constraints in terms of linear restrictions on the vector $\theta = (\theta_1, \ldots, \theta_N)^\T$ is an attractive feature of this basis not necessarily shared by other basis.

In either case, the parameter space $\m C$ for $\theta$ is the non-negative orthant $[0,\infty)^N$. 
If $f$ were unrestricted, a \textsc{gp} prior on $f$ would induce a dependent Gaussian prior on $\theta$. The approach of \cite{maatouk2017gaussian} is to restrict this dependent prior subject to the linear restrictions, resulting in a truncated normal prior. 

\section*{Supplementary Material}

In this supplementary document, we first collect all remaining technical proofs in the first two sections. \S \ref{sec:add_post} provides additional details on prior illustration, posterior computation, and posterior performance. In \S \ref{sec:mean} we formulate the concentration property of the posterior center $\mu_N$. Several auxiliary results used in the proofs are listed in \S \ref{sec:appd}. 

\section{Proofs of auxiliary results in the proof of Theorem \ref{thm:main}}\label{sec:proofs}
In this section, we provide proofs of Lemma \ref{slepian_gl_main} and Lemma \ref{upper_lower_small} that were used to prove Theorem \ref{thm:main} in the main manuscript. For any $N$-dimensional vector $a = [a_1, \ldots, a_d]^\T$ we denote its sub-vector $a_{[i_1\,:\,i_2]} = [a_{i_1}, \ldots, a_{i_2}]^\T$ for any $1\le i_1<i_2 \le d$. For two vectors $a$ and $b$ of the same length, let $a \ge b$ ($a \le b$) denote the event $a_i \ge b_i$ ($a_i \le b_i$) for all $i$. For two random variables $X$ and $Y$, We write $X \overset{\mathrm{d}}{=} Y$ if $X$ and $Y$ are identical in distribution.

%

\subsection{Proof of Lemma 1}\label{sec:slepian_gl}
For random vectors $X\sim \m N({\bf 0}, \Sigma_X)$ and $Y\sim \m N({\bf 0}, \Sigma_Y)$, to show $\bbP(\ell_1\le X_1 \le u_1,\,  X_2 \ge u_2,\ldots, X_d \ge u_d) \le \bbP (\ell_1\le Y_1 \le u_1,\,  Y_2\ge u_2, \ldots, Y_d \ge u_d)$, it suffices to show
\begin{equation}\label{eq:inq}
\begin{aligned}
\bbP(Y_1\ge u_1, Y_2\ge u_2,\dots, Y_d\ge u_d) - \bbP(X_1\ge u_1, X_2\ge u_2,\dots, X_d\ge u_d)  \\
\le  \bbP(Y_1\ge \ell_1, Y_2\ge u_2,\dots, Y_d\ge u_d)-\bbP(X_1\ge \ell_1, X_2\ge u_2,\dots, X_d\ge u_d). 
\end{aligned}
\end{equation}
We define $d$-dimensional indicator functions $G(x) = \mathbbm{1}_{[u_1,\infty)}(x_1) \prod_{j=2}^d \mathbbm{1}_{(u_j,\infty)}(x_j)$ and $F(x) = \mathbbm{1}_{[\ell_1,\infty)}(x_1) \prod_{j=2}^d \mathbbm{1}_{(u_j,\infty)}(x_j)$, then it is equivalent to show
\begin{align}\label{eq:inq_med}
\bbE \{G(Y)\} - \bbE \{G(X)\} \le \bbE \{F(Y)\} - \bbE \{F(X)\}. 
\end{align}
We now construct non-decreasing approximating functions of $G, F$ with continuous second order derivatives respectively. Let $\nu \in C^2({\mb R})$  be a non-decreasing twice differentiable function with $\nu(t)=0$ for $t\le 0$, $\nu (t) \in [0,1]$ for $t\in [0,1]$, and $\nu(t)=1$ for $t\ge 1$. Also, choose $\nu$ so that $\|\nu'\|_{\infty} < C$ for some universal constant $C>0$. For $\eta >0$, we define $m_{\eta}(x) =  \nu (\eta x)$. It is clear that $m_{\eta}(x)$ approximates $\mathbbm{1}_{[0,\infty)}(x)$ for large $\eta$. In fact, for any $x\neq 0$, $\lim_{\eta \to \infty}m_{\eta}(x)=\mathbbm{1}_{[0,\infty)}(x)$. 

Given the above, let $g^{\eta}_j (x_j)= \nu\{\eta (x_j-u_j)\}$ for $j=1,\dots, d$, and $f^{\eta}_1 = \nu\{\eta(x-\ell_1)\}$, $f^{\eta}_j = \nu\{\eta (x_j-u_j)\}$ for $j =2,\dots, d$.  Define
\begin{align*}
g^{\eta}(x) = \Pi_{j=1}^d g^{\eta}_j(x_j) \quad \mbox{and} \quad f^{\eta}(x) = \Pi_{j=1}^d f^{\eta}_j(x_j).
\end{align*}
It then follows that $g^{\eta}$ and $f^{\eta}$ provide increasingly better approximations of $G$ and $F$ as $\eta \to \infty$. It thus suffices to show 
\begin{align}\label{eq:bnd_app}
\bbE \{g^{\eta}(Y)\} - \bbE \{g^{\eta}(X)\} \le \bbE \{f^{\eta}(Y)\} - \bbE \{f^{\eta}(X)\}, 
\end{align}
for sufficiently large $\eta>0$ to be chosen later. We henceforth drop the superscript $\eta$ from $g$ and $f$ for notation brevity. 

We proceed to utilize an interpolation technique commonly used to prove comparison inequalities (see Chapter 7 of \cite{vershynin2018high}). We construct a sequence of interpolating random variables based on the independent random variables $X, Y$:
\begin{align*}
S_t = (1-t^2)^{1/2}X + tY, \quad t\in [0,1].
\end{align*}
Specifically, we have $S_0 = X$, $S_1 = Y$, and for any $t \in [0,1]$, $S_t \sim \m N({\bf 0}, \widetilde{\Sigma}_t)$ where $ \widetilde{\Sigma}_t = (1-t^2)\Sigma_X + t^2\Sigma_Y$. For any twice differentiable function $h$, we have the following identity 
\begin{align}\label{eq:int_parts}
\bbE \{h(Y)\} - \bbE \{h(X)\} = \int_0^1 \frac{d}{dt}\bbE \{h(S_t)\}\,dt. 
\end{align} 
Applying a multivariate version of Stein's lemma (Lemma 7.2.7 in \cite{vershynin2018high}) to the integrand in \eqref{eq:int_parts}, one obtains  
\begin{align}\label{eq:multi_stein}
\frac{d}{dt} \bbE \{h(S_t)\} &= t \sum_{i,j=1}^d \bbE\bigg[\{\bbE(Y_iY_j) - \bbE(X_iX_j)\}\frac{\partial^2 h}{\partial x_i \partial x_j} (S_t) \bigg]. 
\end{align}
To show \eqref{eq:bnd_app}, we define the difference $\Delta = [\bbE \{f(Y)\} - \bbE \{f(X)\}] - [\bbE \{g(Y)\} - \bbE \{g(X)\}]$.  We further decompose $\Delta$ as
\begin{align*}
\Delta &= [\bbE \{f(Y)\} - \bbE \{f(X)\}] - [\bbE \{g(Y)\} - \bbE \{g(X)\}]\\
&= \int_0^1 dt\, \bigg\{ \frac{d}{dt} \bbE \{f(S_t)\} - \frac{d}{dt} \bbE \{g(S_t)\} \bigg\} \\
&= \int_0^1 dt\, \bigg\{ t\, \sum_{i,j=1}^d \bbE \bigg[\, \{\bbE(Y_iY_j) - \bbE(X_iX_j)\} \bigg( \frac{\partial^2 f} {\partial x_i\partial x_j}(S_t) - \frac{\partial^2 g} {\partial x_i\partial x_j}(S_t) \bigg)\bigg]\bigg\}\\
&=2 \int_0^1 dt\, \bigg\{ t\, \sum_{j=2}^d \bbE \bigg[\, \{\bbE(Y_1Y_j) - \bbE(X_1X_j)\} \bigg( \frac{\partial^2 f} {\partial x_1\partial x_j}(S_t) - \frac{\partial^2 g} {\partial x_1\partial x_j}(S_t) \bigg)\bigg]\bigg\}\\
&~~~+ \int_0^1 dt\, \bigg\{ t\, \sum_{i,j=2}^d \bbE \bigg[\, \{\bbE(Y_iY_j) - \bbE(X_iX_j)\} \bigg( \frac{\partial^2 f} {\partial x_i\partial x_j}(S_t) - \frac{\partial^2 g} {\partial x_i\partial x_j}(S_t) \bigg)\bigg]\bigg\}\\
&= \Delta_1 + \Delta_2.
\end{align*}   
The second equation follows from \eqref{eq:int_parts} and the third equation follows from $\eqref{eq:multi_stein}$.  First we show $\Delta_1 \ge 0$. Since $\bbE(Y_1Y_j)\ge \bbE(X_1X_j)$ for all $j>1$, it suffices to show that for any fixed $t \in [0,1]$ and for any $j=2,\dots, d$, 
\begin{align*}
D_1 = \bbE \bigg( \frac{\partial^2 f} {\partial x_1\partial x_j}(S_t) - \frac{\partial^2 g} {\partial x_1\partial x_j}(S_t) \bigg) \ge 0.
\end{align*}
We consider a generic interpolating random variable $S \sim \m N\big(\mathbf{0}, \wt{\Sigma}\big)$ by dropping the $t$-subscript; let $\phi(s_1,\dots, s_d)$ denote its probability density function. Then we have 
\begin{align*}
D_1 &= \int_{-\infty}^{\infty} \dots \int_{-\infty}^{\infty} \{ f'_1(s_1)f'_j(s_j) - g'_1(s_1)g'_j(s_j) \} \,\Pi_{l\neq 1,j} f_l(s_l)\, \phi(s_1,\dots, s_d) \,ds_1\dots ds_d\\
&= \int_{-\infty}^{\infty} \dots  \int_{-\infty}^{\infty} \bigg[ \int_{-\infty}^{\infty} \{f'_1(s_1) - g'_1(s_1)\}\,\phi(s_1,\dots, s_N) \,ds_1 \bigg] \,f'_j(s_j)\,  \Pi_{l\neq 1,j} f_l (s_l)\,  ds_2\dots ds_d.
\end{align*}
To guarantee $D_1$ is non-negative we need the integral over $s_1$ to be non-negative. Based on the definition of $f_1$ and $g_1$, the integral over $s_1$ can be simplified to
\begin{align} \label{eq:int_s1}
&\int_{-\infty}^{\infty} \{f'_1(s_1) - g'_1(s_1)\} \, \phi(s_1,\ldots, s_N) \,ds_1 \nonumber \\
&= \int_{\ell_1}^{\ell_1+1/\eta} \big\{\eta\, \nu'\big(\eta (s_1-\ell_1)\big)\big\} \, \phi(s_1,\ldots, s_N) \,ds_1  - \int_{u_1}^{u_1+1/\eta} \big\{\eta\, \nu'\big(\eta (s_1-u_1)\big)\big\} \, \phi(s_1,\ldots, s_N) \,ds_1 \nonumber \\
&= \int_0^{1/\eta} \, \eta\, \nu'(\eta s_1) \{\phi(s_1+\ell_1, s_2, \ldots, s_N) - \phi(s_1+u_1, s_2,\ldots, s_N) \} ds_1.
\end{align}

Let us denote the inverse of the covariance matrix $\widetilde{\Sigma}$ as 
$$ \widetilde{\Sigma}^{-1} = \begin{bmatrix}
\widetilde{\Sigma}^{-1}_{11} &  \widetilde{\Sigma}^{-1}_{12}\\
 \widetilde{\Sigma}^{-1}_{21} & \widetilde{\Sigma}^{-1}_{22}\\
\end{bmatrix},$$ 
where $\widetilde{\Sigma}^{-1}_{11}$ is a scalar. To check the non-negativity of the last line in \eqref{eq:int_s1}, we now estimate the term 
\begin{align*}
\frac{\phi(s_1+\ell_1,s_2,\ldots, s_d)}{\phi(s_1+u_1,s_2,\ldots,s_d)} = e^{\{(u_1^2-\ell_1^2)+2s_1\,(u_1-\ell_1)\}\, \widetilde{\Sigma}^{-1}_{11}/2+\,(u_1-\ell_1)\,  \widetilde{\Sigma}^{-1}_{12}\,\tilde{s}_2},
\end{align*}
where $\tilde{s}_2 = (s_2,\dots, s_d)^{\T}$. Since $s_j \in [0,1/\eta]$, we have $s_1\,(u_1-\ell_1)\}\, \widetilde{\Sigma}^{-1}_{11}>0$. We denote $\tilde{\rho} = \max \{\widetilde{\Sigma}^{-1}_{12}\}$ as the largest element of $\widetilde{\Sigma}^{-1}_{12}$. Then, one can choose $\eta$ large enough such that 
\begin{align*}
(u_1+\ell_1)\,\widetilde{\Sigma}^{-1}_{11} - 2 (d-1)\tilde{\rho}/\eta \ge 0, 
\end{align*}
to guarantee $D_1 \ge 0$. For example  $\eta = 4 (d-1)\,\tilde{\rho}\, \widetilde{\Sigma}_{11} /(u_1+\ell_1)$ satisfies the above inequality. 

Now we show $\Delta_2 \ge 0$. We have $\bbE(Y_iY_j)\ge \bbE(X_iX_j)$ for all $i,j = 2,\dots, d$. For any $i,j \ge 2$, for any fixed $t \in [0,1]$, we define 
\be 
D_2 = \bbE \bigg( \frac{\partial^2 f} {\partial x_i\partial x_j}(S_t) - \frac{\partial^2 g} {\partial x_i\partial x_j}(S_t) \bigg) = \bbE \{ (f_1 - g_1)f'_i\,g'_j\, \Pi_{k\neq 1,i,j}f_k \}.
\ee
Since $ f_1- g_1\ge 0$, and $f'_j \ge 0$ for all $j>1$, it follows that $D_2 \ge 0$ and thus $\Delta_2 \ge 0$.  Combining with the non-negativity of $\Delta_1$ completes the proof of Lemma \ref{sec:slepian_gl}. 
\subsection{Proof of Lemma \ref{upper_lower_small}} \label{upper_new} 
For $X\sim \m N (\mathbf{0}, \Sigma_d(\rho))$ with $\rho\in(0,1)$, we will repeatedly use its equivalent expression 
\begin{align}\label{eq:X_2}
X_i = \rho^{1/2}\, w + (1- \rho)^{1/2}\, W_i \quad (i=1,\ldots,N),
\end{align}
where $w, W_i$'s are independent standard normal variables. \\[1ex]

\noindent {\bf Proof of the upper bound.}  We recall $\bar{\rho} = (1-\rho)/\rho$.  
For any fixed $\delta>0$ and $\alpha \in (0,1)$, we have 
\begin{align}\label{eq:P}
&\bbP(0\le X_1<\delta,X_2\ge 0, \ldots, X_d \ge 0) \\ \nonumber
&=\bbP\Big(0\le {\rho}^{1/2}\,w + {(1-\rho)}^{1/2}\, W_1 \le \delta, w \ge  {\bar{\rho}}^{1/2}\max_{2\le i\le d}W_i\Big)\\\nonumber
&=\bbP\Big(\Big\{0 \le {\rho}^{1/2}\,w + {(1-\rho)}^{1/2}\, W_1 \le \delta, w \ge {\bar{\rho}}^{1/2}\max_{2\le i\le d}W_i\Big\}\\\nonumber
&~~~~\cup \Big[\max_{i\le d}W_i \ge \{2(1-\alpha)\log (d-1)\}^{1/2}\Big] \cup \Big[\max_{i\le d}W_i \le \{2(1-\alpha)\log (d-1)\}^{1/2}\Big ]\Big)\\\nonumber
&\le \bbP\Big[0\le  {\rho}^{1/2}\,w + {(1-\rho)}^{1/2}\, W_1 \le \delta, w \ge \{2\,\bar{\rho}\,(1-\alpha)\log (d-1)\}^{1/2}\Big] \\\nonumber
&~~~~+ \bbP\Big[\max_{i\le d}W_i \le \{2(1-\alpha)\log (d-1)\}^{1/2}\Big]\\\nonumber
&=P_1 +P_2.
\end{align}
First, we estimate $P_1$ in \eqref{eq:P}. By applying the equivalent expression of $X$ in \eqref{eq:X_2}, we have  
\begin{align*}
P_1 &= \bbP\bigg[ W_1 \in \bigg\{-\bigg(\frac{\rho}{1-\rho}\bigg)^{1/2} w,  \delta/(1-\rho)^{1/2}-\bigg(\frac{\rho}{1-\rho}\bigg)^{1/2} w\bigg\} \mid w \ge  \{2\,\bar{\rho}\,(1-\alpha)\log (d-1)\}^{1/2}\bigg]\\\nonumber
&~~~~~\bbP\big[ w \ge \{2\,\bar{\rho}\,(1-\alpha)\log (d-1)\}^{1/2}\,\big]\\\nonumber
&\le \bbP\big[ W_1 \in \big(  -\{2(1-\alpha)\log (d-1)\}^{1/2},  \delta(1-\rho)^{-1/2}-\{2(1-\alpha)\log (d-1)\}^{1/2}\,\big)\big]\\\nonumber
&~~~~~\bbP\big( w \ge \{2\,\bar{\rho}\,(1-\alpha)\log (d-1)\}^{1/2}\,\big)\\\nonumber
&\le \delta(2\pi)^{-1/2}\, \exp\big(-\big[ \delta(1-\rho)^{-1/2}-\{2(1-\alpha)\log (d-1)\}^{1/2}\big]^2/2\big)\\\nonumber
&~~~~~\{2\,\bar{\rho}\,(1-\alpha)\log (d-1)\}^{-1/2}\,\exp\big(- \{2\,\bar{\rho}\,(1-\alpha)\log (d-1)\}^2/2\big).
\end{align*}
The last inequality follows from Lemma \ref{lem:l4} in  Appendix \ref{sec:appd}. 

Now we move to estimate the term $P_2$ in \eqref{eq:P}. We have, 
\begin{align*}
 \bbP\big[\max_{i\le d} W_i \le \{2\,(1-\alpha)\log d\}^{1/2}\big] &= \big(1-\bbP\big[Z\ge \{2\,(1-\alpha)\,\log d\}^{1/2}\,\big]\big)^d\\
& \le \exp\big(-d\, \bbP\big[Z\ge \{2\,(1-\alpha)\,\log d\}^{1/2}\,\big]\big) 
 \le \exp(-d\,^{\alpha}),
\end{align*}
where $Z\sim \m N(0,1)$.

Combining bounds for $P_1$ and $P_2$, we obtain 
\begin{align*}
\bbP(0\le X_1<\delta,X_2\ge 0, \ldots, X_d \ge 0) \le \delta\,  \{2\bar{\rho}\,(1-\alpha)\log (d-1)\}^{-1/2}\, (d-1)^{-(1-\alpha)/\rho} + \exp(-d\,^\alpha),
\end{align*}
for any $\alpha \in (0,1)$. We then complete the proof of the upper bound.

\noindent {\bf Proof of the lower bound.} We provide a more general result for the lower bound. We show that for any scalar $a\ge0$, we have 
\begin{align}\label{eq:lower_general}
\bbP(X \ge a \mathbf{1}_d) \ge \frac{a {\rho}^{-1/2} +  (2\,\bar{\rho}\, \log N)^{1/2}}{\big\{a {\rho}^{-1/2} + (2\,\bar{\rho}\, \log d)^{1/2}\big\}^2+1} \exp  \bigg[- \frac{1}{2} \bigg\{a {\rho}^{-1/2} + (2\,\bar{\rho}\, \log d)^{1/2}\bigg\}^2\bigg],
\end{align} 
where recall that $\mathbf{1}_N$ denotes a $N$-dimensional vector of ones.  By taking $a =0$ leads to the desired lower bound in Lemma \ref{upper_lower_small}. 

Now we prove the lower bound in \eqref{eq:lower_general}. First,  
\begin{align}\label{res1}
\bbP( X \ge a\mathbf{1}_d) &= \bbP \big(\rho^{1/2}\, w + (1-\rho)^{1/2}\, W_i \geq a, \, \mbox{for}\, i =1,\dots,d\big)  \\\nonumber
&= \bbE \big(\,\bbP\big[w\ge {\rho}^{-1/2}\{a - (1-\rho)^{1/2} \,W_i \},\, i = 1,\dots,d \mid {W_1,\ldots, W_d}\big]\big)\\\nonumber
& \overset{\mathrm{(i)}}{=}  \bbE \big(\,\bbP \big[w\ge \rho^{-1/2}\big\{a +(1-\rho)^{1/2} \,\max_i W_i \big\} \mid W_1,\ldots, W_N\big]\big)\\\nonumber 
&=  \bbE \bigg\{1-\Phi\bigg(a\rho^{-1/2}+ {\bar{\rho}}^{1/2}\max_i W_i\bigg)\bigg\}, 
\end{align}
where $W = [W_1,\dots, W_d]^\T$. Here, $(\mbox{i})$ holds since $-W_i \overset{\mathrm{d}}{=} W_i$ for $i=1,\dots, d$ and $\max_{i\le d} (-W_i) \overset{\mathrm{d}}{=} \max_{i \le d}(W_i)$.\\  
\
\\
We now proceed to lower bound the right hand side of the last equation in \eqref{res1}. To that end, we define $g(a, b) = 1 - \Phi( a\rho^{-1/2} + {\bar{\rho}}^{1/2}\,b)$, where $g: \mathbb{R}_+\times \mathbb{R} \to [0,1]$. Importantly, $g$ is non-increasing function of $a,b$ for $a,b \in \mathbb{R}$, and $g$ is a convex function of $(a, b)$ for $a,b>0$.  For any fixed $a >0$, since $g(a, \max_i W_i)$ is non-increasing in $\max_i W_i$, we have $g(a, \max_i W_i) \ge g(a, \max_i |W_i|)$. We then apply Jensen's inequality, 
\begin{align*}
{ \bbE} \Big\{g\Big(a, \max_{1\le i\le d} |W_i|\Big)\Big\} \ge g\Big\{a, \bbE \Big(\max_{1\le i\le d} |W_i|\Big)\Big\} \ge g\big\{a, (2 \log d)^{1/2}\big\}.
\end{align*}
The last inequality holds by applying Lemma \ref{lem:l3} in Appendix \ref{sec:appd}. To lower bound $g\big\{a, (2 \log d)^{1/2}\big\}$ we apply Lemma \ref{lem:l4} in  Appendix \ref{sec:appd}. Eventually, we obtain
\begin{align*}
\bbE\, \Big\{g\Big(a, \max_{1\le i\le d} |W_i|\Big)\Big\} \ge \frac{a\rho^{-1/2} +  (2\bar{\rho}\log d)^{1/2}}{\big\{a\rho^{-1/2} + (2\bar{\rho}\log d)^{1/2}\big\}^2 +1}\, \exp\big[-\big\{a\rho^{-1/2} +  (2\bar{\rho}\log d)^{1/2}\big\}^2/2\big]. 
\end{align*}
This completes the proof for the lower bound.

\section{Remaining proofs from the main document}

\subsection{Proof of the Proposition \protect \ref{PROP:MARG_K}}\label{sec:proof_MARG}

Now we derive the $k$-dimensional marginal density function.  We denote $\theta^{(k)}= (\theta_1,\dots,\theta_k)^{\T}$ and $\theta^{(N-k)} = (\theta_{k+1},\dots, \theta_N)^{\T}$. We partition $\Sigma_N$ into appropriate blocks as
$$\Sigma_N= \begin{bmatrix}
\Sigma_{k,k} &  \Sigma_{k,N-k}\\
\Sigma_{N-k,k} & \Sigma_{N-k,N-k}\\
\end{bmatrix}.
$$
We also partition its inverse matrix $\widetilde{\Sigma}_N$, 
$$\widetilde{\Sigma}_N = \begin{bmatrix}
\widetilde{\Sigma}_{k,k} &  \widetilde{\Sigma}_{k,N-k}\\
 \widetilde{\Sigma}_{k,N-k} & \widetilde{\Sigma}_{N-k,N-k}\\
\end{bmatrix}.$$
Then the $k$-dimensional marginal $\widetilde{p}_{k,N}(\theta_1,\dots,\theta_k) = $
\begin{align*}
& \bigg(\frac{1}{2\pi}\bigg)^{N/2}\,\{\mbox{det}(\Sigma)\}^{-1/2}\,\int_0^\infty\dots \int_0^\infty \exp\big\{-\big({\theta^{(k)}}^{\T}\widetilde{\Sigma}_{k,k}\, \theta^{(k)}  \\
&~~~~-2{\theta^{(k)}}^{\T}\,\widetilde{\Sigma}_{k,N-k}\,\theta^{(N-k)}+{\theta^{(N-k)}}^\T \,\widetilde{\Sigma}_{N-k,N-k}\,\theta^{(N-k)}\big)/2\big \}\, d \theta^{(N-k)}\\
&=\bigg(\frac{1}{2\pi}\bigg)^{k/2} \exp \big\{-{\theta^{(k)}}^{\T}\widetilde{\Sigma}_{k,k}\theta^{(k)}/2\big\} \cdot \Pi_{i=1}^k\ind_{[0,\infty)}(\theta_i)\bigg(\frac{1}{2\pi}\bigg)^{(N-k)/2}  \{\mbox{det}(\widetilde{\Sigma}_{N-k,N-k})\}^{-1/2}  \\
&~~~~\cdot \int_0^\infty \dots \int_0^\infty  \exp\big\{-\| \widetilde{\Sigma}_{N-k,N-k}^{\frac{1}{2}}\,\big(\theta^{(N-k)} -\Sigma_{N-k,k}\,\Sigma^{-1}_{k,k}\,\theta^{(k)}\big)\|^2 /2 \big\} d\theta^{(N-k)}\\
&=\bigg(\frac{1}{2\pi}\bigg)^{k/2} \exp \{-{\theta^{(k)}}^{\T} \widetilde{\Sigma}_{k,k}\,\theta^{(k)}/2\} \,\bbP(\widetilde{X}_{N-k} \le \Sigma_{N-k,k} \,\Sigma^{-1}_{k,k}\, \theta^{(k)})\cdot \Pi_{i=1}^k \ind_{[0,\infty)}(\theta_i).
\end{align*}
where  
\begin{align*}
\widetilde{\Sigma}_{k,k} &= \Sigma^{-1}_{k,k}+\Sigma^{-1}_{k,k}\,\Sigma_{k,N-k}\,\widetilde{\Sigma}_{N-k,N-k}\,\Sigma_{N-k,k}\,\Sigma^{-1}_{k,k},\\
\widetilde{\Sigma}_{k,N-k} &=\Sigma^{-1}_{k,k}\,\Sigma_{k,N-k}\,\widetilde{\Sigma}_{N-k,N-k}, \\
\widetilde{\Sigma}_{N-k,N-k} &= (\Sigma_{N-k,N-k} - \Sigma_{N-k,k}\,\Sigma^{-1}_{k,k}\,\Sigma_{k,N-k})^{-1},
\end{align*}
and $\widetilde{X}_{N-k} \sim \m N_{N-k} (\mathbf{0}_{N-k}, \widetilde{\Sigma}^{-1}_{N-k,N-k})$. 

\subsection{Proof of Proposition \ref{prop:app}}\label{sec:band_proof}
 We first introduce some notations that are used in the proof. For a $N \times N$ matrix $A$, we denote $\lambda_j(A)$ as its $j$th eigenvalue, and denote $\lambda_{\min}(A)$ and $ \lambda_{\max}(A)$ as the minimum and maximum of eigenvalues, respectively. For a matrix $A$, we define its operator norm as $\|A\| = \{\lambda_{\max}(A^\T A)\}^{1/2}$. For two quantities $a,b$, we write $a \asymp b$ when  
 $a/b$ can be bounded from below and above by two finite constants. 
   
We repeatedly apply Newmann series and Lemma \ref{band_inv} in Appendix \ref{sec:appd} to construct the approximation matrix to the posterior scale matrix $\Sigma_N$.  Under Assumption \ref{ass:band}, we have the prior covariance matrix $\Omega_N \in \m M(\lambda_0, \alpha, k)$ for some universal constants $\lambda_0, \alpha, k>0$. Then for any $\epsilon\in (0,\lambda_0/2)$, by choosing $r \ge \log(C/\epsilon)/\alpha$, one can find a $r$-banded symmetric and positive definite matrix $\Omega_{N,r}$ such that 
\begin{align}\label{r_op}
\| \Omega_N - \Omega_{N,r} \| \le \epsilon.
\end{align}
Now we let $M = \lambda_{\max}(\Omega_{N,r})$ and $m = \lambda_{\min}(\Omega_{N,r})$. Given \eqref{r_op}, we have 
\begin{align}\label{band1_eigen}
\lambda_0-\epsilon \le m \le M \le 1/\lambda_0 +\epsilon.
\end{align} 
By choosing $\xi = 2/(M+m)$, simple calculation gives $\|I_N - \xi\,\Omega_{N,r}\| < 1$. We now apply Newmann series to construct a polynomial of $\Omega_{N,r}$ of degree $n_1$, defined as   
$\widetilde{\Omega}^{-1} = \xi \, \sum_{j=0}^{n_1}(I - \xi\,\Omega_{N,r})^j$, for some integer $n_1>0$ to be chosen later. Applying Lemma \ref{band_inv} in  Appendix \ref{sec:appd}, we have 
\begin{align}\label{band2_op}
\| \Omega_{N,r}^{-1} - \widetilde{\Omega}^{-1} \| \le \kappa_0^{n_1+1}/(\lambda_0-\epsilon), 
\end{align}
where $\kappa_0 = (M-m)/(M+m)$. Applying Lemma \ref{band_inv} we guarantee $\widetilde{\Omega}^{-1}$ is $(n_1\,r)$-banded and positive definite.  Combining results in \eqref{band1_eigen} and \eqref{band2_op}, we have
\begin{align}\label{eq:eigen_omega}
\lambda_0/(1+\lambda_0\epsilon) - \kappa_0^{n_1+1}/(\lambda_0-\epsilon) \le\lambda_{\min}(\widetilde{\Omega}^{-1})\le \lambda_{\max}(\widetilde{\Omega}^{-1}) \le 1/(\lambda_0-\epsilon) + \kappa_0^{n_1+1}/(\lambda_0-\epsilon).
\end{align}
Now we let $\widetilde{\Sigma}^{-1}=\widetilde{\Omega}^{-1} + \Phi^\T\Phi$. Under Assumption \ref{ass:basis} we have $\widetilde{\Sigma}^{-1}$ is $k$-banded with $k = \max\{n_1\,r, q\}.$ We then define $\tilde{\lambda}_1 = \lambda_{\max}(\widetilde{\Sigma}^{-1})$ and $\tilde{\lambda}_N = \lambda_{\min}(\widetilde{\Sigma}^{-1})$. Thus, given \eqref{eq:eigen_omega}, we have 
 \begin{align*}
C_1\,(n/N) + \lambda_0/(1+\lambda_0\epsilon) - \kappa_0^{n_1+1}/(\lambda_0-\epsilon) \le \tilde{\lambda}_N \le\tilde{\lambda}_1 \le C_2\,(n/N) + 1/(\lambda_0-\epsilon) + \kappa_0^{n_1+1}/(\lambda_0-\epsilon),
\end{align*}
for constants $0<C_1 <C_2 <\infty$ in Assumption \ref{ass:band}. 

We first consider the case where $N/n \to a$ for some constant $a\in (0,1)$, as $n,N \to \infty$. For sufficiently large $n,N$, we obtain 
\begin{align}\label{eigen_lim_I}
C'_1 a + \lambda_0/(1+\lambda_0\epsilon) \le \tilde{\lambda}_N \le\tilde{\lambda}_1 \le C'_2\,a + 1/(\lambda_0-\epsilon),
\end{align}
for constants $C'_1,C'_2$ satisfying $C'_1<C_1$ and $C_2<C'_2$.

Secondly, we consider the case where $N/n\to 0$ as $n,N\to \infty$. In this case,  $n/N$ dominates in the eigenvalues of $\widetilde{\Sigma}^{-1}$. Thus, for sufficiently large $n,N$, we have
\begin{align}\label{eigen_lim_II}
C_1 \,(n/N) \le \tilde{\lambda}_N \le\tilde{\lambda}_1 \le C_2\,(n/N).
\end{align}
Now we apply Lemma \ref{band_inv} one more time to construct the approximation matrix to the inverse of $\widetilde{\Sigma}^{-1}$. Again, by taking $\gamma = 2/(\tilde{\lambda}_1 + \tilde{\lambda}_N)$, we have $\| I_N -\gamma\, \widetilde{\Sigma}^{-1}\| <1$. Now we define $\Sigma'  =\gamma\,\sum_{j=0}^{m_1} (I_N -\gamma\, \widetilde{\Sigma}^{-1})^j$ for some positive integer $m_1$. Also, it follows 
\begin{align}\label{app_sigma}
\|\widetilde{\Sigma} - \Sigma' \| \le \tilde{\kappa}^{m_1+1}/{\tilde{\lambda}_N}, 
\end{align}
where $\tilde{\kappa} = (\tilde{\lambda}_1-\tilde{\lambda}_N)/(\tilde{\lambda}_1+\tilde{\lambda}_N)$. By construction $\Sigma'$ is $(m_1\,k)$-banded. 

Now we estimate $\tilde{\kappa}$. For large enough $N,n$ in the first case, we can upper bound 
$$\tilde{\kappa} \le \kappa_1 = \frac{(C'_2-C'_1)\,a + 1/(\lambda_0-\epsilon) - \lambda_0/(1+\lambda_0\epsilon)} {(C'_2+C'_1)\,a + 1/(\lambda_0-\epsilon) + \lambda_0/(1+\lambda_0\epsilon)}.$$
The inequality holds since the map $x \mapsto (1-x)/(1+x)$ is non-increasing in $x \in (0,1)$. Combing this with the result in \eqref{eigen_lim_I} and taking $x = \tilde{\lambda}_N/\tilde{\lambda}_1$ leads to the expression of $\kappa_1$. Based on \eqref{app_sigma}, we have $\|\widetilde{\Sigma} - \Sigma' \| \le \kappa_1^{m_1+1}/\{C'_1 a + \lambda_0/(1+\lambda_0\epsilon)\}$.  For $N,n$ in the second case, following a similar line of argument, we have $\|\widetilde{\Sigma} - \Sigma'\| \le \tilde{\kappa}^{m_1+1}\, N/(C_1n)$ with $\tilde{\kappa} = (C_2-C_1)/(C_2+C_1)$.

We recall the posterior scale matrix $\Sigma_N = (\Omega_N^{-1}+\Phi^\T\Phi)^{-1}$. Then we have 
\begin{align*}
\| \Sigma_N - \Sigma'\| &\le \| \Sigma_N - \widetilde{\Sigma} \| + \|\widetilde{\Sigma} -  \Sigma' \| \\ \nonumber
&\le  \| \Sigma_N\| ( \| \Omega_N^{-1} - \Omega_{N,r}^{-1} \|  + \|\Omega_{N,r}^{-1} - \widetilde{\Omega}^{-1}\| )\|\widetilde{\Sigma}\| + \|\widetilde{\Sigma} -  \Sigma'\| \\ \nonumber
& \le \| \Sigma_N\| \|\widetilde{\Sigma}\| (c_1\,\epsilon + c_2\, \kappa_0^{n_1+1}) + \|\widetilde{\Sigma} - \Sigma' \| 
\end{align*}
where $c_1 = \|\Omega^{-1}\|\|\Omega_{N,r}^{-1}\|$ and $c_2 = 1/(\lambda_0-\epsilon)$. The first inequality follows from the triangular inequality and the second inequality follows from the identity $\|A^{-1}-B^{-1}\|= \|A^{-1}\| \|A-B\|\|B^{-1}\|$ for invertible matrices $A,B$. The last inequality follows from results in \eqref{r_op} and \eqref{band2_op}. 

For $N,n$ in the first case, $\|\Sigma_N\|$ and $ \|\widetilde{\Sigma}\| $ are upper bounded by some constants that are free of $n,N$ given \eqref{eigen_lim_I}.  Then we obtain 
\begin{align*}
\| \Sigma_N - \Sigma'\| \le  C'(\epsilon+\kappa_0^{n_1+1} + \, \kappa_1^{m_1+1}),
\end{align*} 
where $C' = \max \{c_1,c_2, C'_1\,a +\lambda_0/(1+\lambda_0\epsilon)\}/\{C'_1\,a +\lambda_0/(1+\lambda_0\epsilon)\}^2$. 

For $N,n$ in the second case,  for sufficiently large $N,n$ we have $\|\Sigma_N\| \asymp (N/n)$ given \eqref{eigen_lim_II}. Then we have
\begin{align*}
 \| \Sigma_N - \Sigma'\| \le C''\, \{(N/n)^{2}(\epsilon +\kappa_0^{n_1+1}) + (N/n) \tilde{\kappa}^{m_1+1}\},
 \end{align*}
where $C'' =C^{-2}_1\,\max\{c_1,c_2, C_1\}$. Letting $\kappa= \max\{\kappa_0, \kappa_1,\tilde{\kappa}\}$, $n_0 =\min\{n_1,m_1\}$, and $\delta_{\epsilon,\kappa} = (\epsilon +\kappa^{n_0+1})\max\{(N/n),(N/n)^2\}$ yields the result in Proposition \ref{prop:app}.

\section{Additional details on the numerical studies}\label{sec:add_post}

\subsection{Prior draws}\label{app:prior-draw}

We consider equation \eqref{shrink_eq} and the prior specified in section \ref{sec:meth}. Prior samples on both $\theta$ and $\xi$ of dimension $N = 100$ were drawn. Figure \ref{prior-draw} shows prior draws for the first and third components of both $\theta$ and $\xi$. 

\begin{figure}
	\centering
	\includegraphics[width=0.8\textwidth]{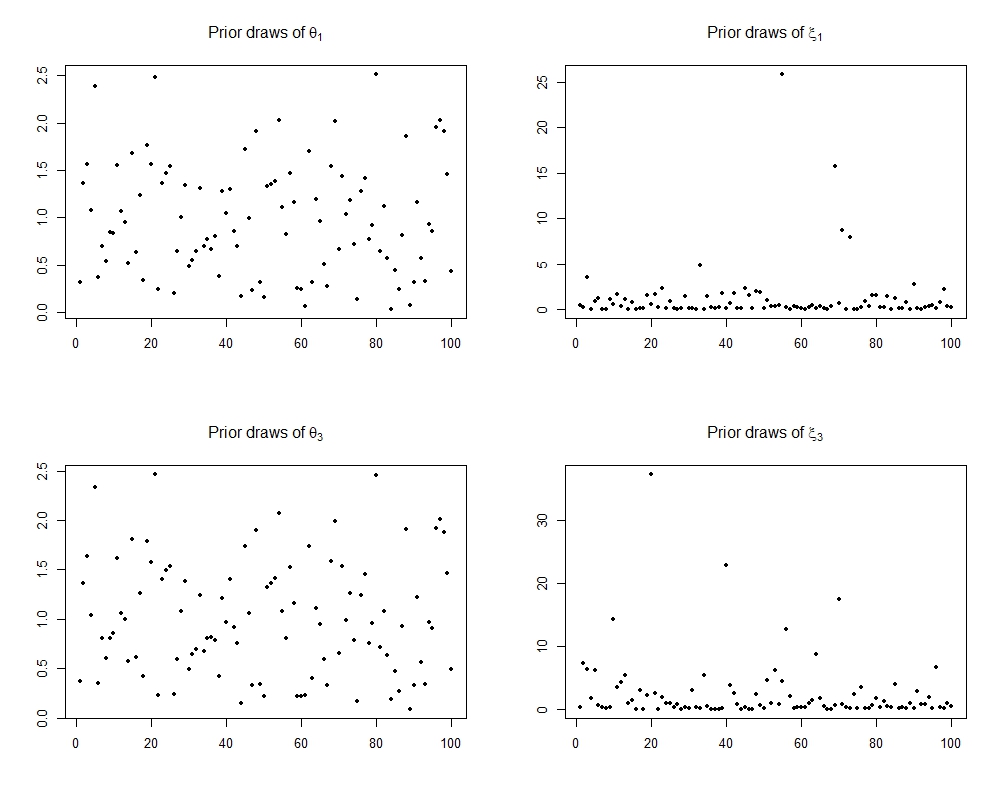}
	\caption{{\em Showing prior draws from distribution of $\theta$ (left panel) and $\xi$ (right panel). Top and bottom panels correspond to first and third components respectively, for both $\theta$ and $\xi$.}}
	\label{prior-draw}
\end{figure}

\subsection{Posterior Computations} \label{post-sam}

We now consider model \eqref{model} and the prior specified in section \ref{sec:meth}. Then the full conditional distribution of $\theta$
\begin{align*}
\pi (\theta \mid Y, \xi_0, \lambda, \tau, \sigma) \propto \exp \bigg\{ - \frac{1}{2 \sigma^2} \|\widetilde{Y} - \Psi \Lambda \theta \|^2 \bigg \} \, \exp \bigg\{ - \frac{1}{2 \tau^2} \theta^\T K^{-1} \theta \bigg \} \, \ind_{\mc C_{\theta}}(\theta)
\end{align*}
can be approximated by 
\small \begin{eqnarray}
\pi (\theta \mid Y, \xi_0, \lambda, \tau, \sigma) \propto \exp \bigg\{ - \frac{1}{2 \sigma^2} \|\widetilde{Y} - \Psi_{\lambda} \, \theta \|^2 \bigg \} \, \exp \bigg\{ - \frac{1}{2 \tau^2} \theta^\T K^{-1} \theta \bigg \} \, \bigg\{ \prod_{j=1}^{N+1} \frac{e^{\eta \, \theta_j}}{1 + e^{\eta \, \theta_j}} \bigg\} \nonumber \\
= \bigg[ \exp \bigg\{ - \frac{1}{2 \sigma^2} \|\widetilde{Y} - \Psi_{\lambda} \, \theta \|^2 \bigg \} \, \bigg\{ \prod_{j=1}^{N+1} \frac{e^{\eta \, \theta_j}}{1 + e^{\eta \, \theta_j}} \bigg\} \bigg] \, \exp \bigg\{ - \frac{1}{2 \tau^2} \theta^\T K^{-1} \theta \bigg \} \nonumber
\end{eqnarray}
\normalsize
where $\eta$ is a large valued constant, $\widetilde{Y} = Y - \xi_0 \mr 1_n$ and $\Psi_{\lambda} = \Psi  \Lambda$. The above is same as equation (5) of \cite{ray2019efficient} and thus falls under the framework of their sampling scheme. For more details on the sampling scheme and the approximation, one can refer to \cite{ray2019efficient}. 

Note that $\lambda_j \sim \mc C_{+}(0,1), \,\, j=1, \ldots, N$, can be equivalently given by $\lambda_j \mid w_j \sim \mc N(0, w_j^{-1}) \ind(\lambda_j > 0) \, , \,\, w_j \sim \mc G(0.5,0.5) \, , \,\, j = 1, \ldots, N$. Thus the full conditional distribution of $\lambda$ can be approximated by:
\small
\begin{align*}
\pi (\lambda \mid Y, \xi_0, \theta, w, \tau, \sigma) \propto  \bigg[ \exp \bigg\{ - \frac{1}{2 \sigma^2} \|\widetilde{Y} - \Psi_{\theta} \, \lambda \|^2 \bigg \} \, \bigg\{ \prod_{j=1}^{N+1} \frac{e^{\zeta \, \lambda_j}}{1 + e^{\zeta \, \lambda_j}} \bigg\} \bigg] \, \exp \bigg\{ - \frac{1}{2} \lambda^\T W \lambda \bigg \}
\end{align*}
\normalsize
where $\zeta$ plays the same role as $\eta$, $w = (w_1, \ldots, w_{N})^\T$, $W=\mbox{diag}(w_1, \ldots, w_{N})$, $\Psi_{\theta} = \Psi  \Theta$ and $\Theta = \mbox{diag}(\theta_1, \ldots, \theta_{N})$. Thus, $\lambda$ can be sampled efficiently using algorithm proposed in \cite{ray2019efficient}.

\subsection{Performance of bsar} \label{bsar-plot}  

Consider the simulation set-up specified in section \ref{sec:meth}.  Figure \ref{bsar3} shows the out-of-sample prediction performance of bsar, developed by \cite{lenk2017bayesian}, and implemented by the \textbf{{\fontfamily{qcr} \selectfont R}} package \textbf{{\fontfamily{qcr} \selectfont bsamGP}}.

\begin{figure}[htbp!]
	\centering
	\includegraphics[width=\textwidth]{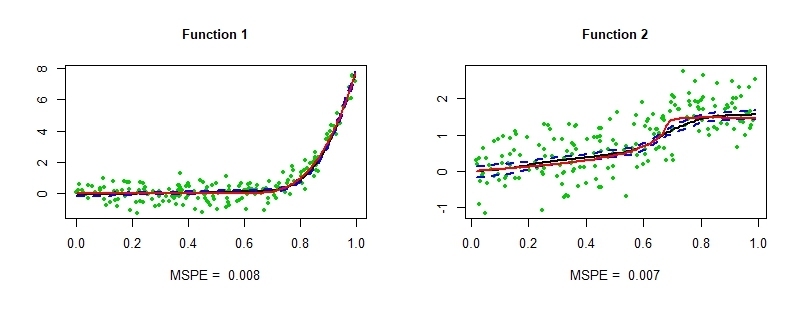}
	\caption{{\em Figure portraying out-of-sample prediction accuracy using bsar for $f_1$ and $f_2$. Red solid curve corresponds to the true function, black solid curve is the mean prediction, the region within two dotted blue curves represent 95\% pointwise prediction Interval and the green dots are $200$ test data points. \textsc{mspe} values corresponding to each of the method are also shown in the plots.}}
	\label{bsar3}
\end{figure}

\section{Concentration result of the posterior mean $\mu$}\label{sec:mean}
We start by introducing some new notations and assumptions. For two variables $X, Y$, we denote the conditional probability measure, conditional expectation and conditional variance of $Y$ given $X$ as $\bbP_{Y\mid X}$ $\bbE_{Y\mid X}$, and $\Var_{Y\mid X}$, respectively. For two quantities $a,b$, we write $a \succsim b$ when $a$ is bounded below by a multiple of $b$. For matrices $A, B$ of the same size, we say $A\le B$ if $B-A$ is positive semi-definite. In the following, we state the assumptions on the basis choice and prior preferences. 

\begin{assumption}\label{ass_basis}
We assume the number of basis $N$ and sample size $n$ satisfy $N/n \to 0$ as $N,n \to \infty$. Also, we assume the basis matrix $\Phi_{n\times N}$ satisfies 
$$ c_1 (n/N)\, I_N\le \Phi^\T\Phi \le c_2 (n/N)\, I_N, $$
 for some constants $0<c_1< c_2<\infty$.
\end{assumption}
For an example of basis that satisfies Assumption \ref{ass_basis}, we take a $q$th ($q\ge 2$) order B-Spline basis function associated with $N-q$ knots. Moreover, under mild conditions, it can be shown that the optimal order of the number of basis $N \asymp n^{c}$ for some $c\in (0,1)$ in the regression setting \citep{yoo2016supremum}.
\begin{assumption}\label{prior_cov}
For the prior distribution $\theta \sim \m N(\mathbf{0}, \Omega_N)$ with $N$ satisfying Assumption \ref{ass_basis}, we assume the covariance matrix $\Omega_N$ satisfies $\lambda_{\min}(\Omega_N) \succsim (N/n)$.
\end{assumption}
Now we are ready to state the concentration result of the posterior center $\mu_N$. 
\begin{proposition}\label{prop:mean}
Under Assumption \ref{ass_basis} and Assumption \ref{prior_cov}, for the truncated normal posterior $\m N_{\m C} (\mu_N, \Sigma_N)$ in \S \ref{sec:band} of the main manuscript, with at least probability $1-2\,N^{-2}$ with respect to $\bbP_{Y\mid X}$, we have
$$ \| \mu_N\|_{\infty} \le \epsilon_N,$$
for $\epsilon_N \ge 2\,(c_2/c^2_1)^{1/2}\,(N\log N/n)^{1/2}$ with $c_1,c_2$ defined in Assumption \ref{ass_basis}.
\end{proposition}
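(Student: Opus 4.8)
The plan is to exploit the explicit form $\mu_N = \Sigma_N \Phi^\T Y$ of the posterior center together with the fact that, in the flat-function regime that motivates \S\ref{sec:mean}, the true coefficient vector is $\theta_0 = \mathbf 0_N$. Under the true data-generating distribution $\bbP_{Y\mid X}$ we then have $Y = \Phi\theta_0 + \epsilon = \epsilon \sim \m N(\mathbf 0_n, \sigma^2\mr I_n)$ (with $\sigma^2 = 1$ as in the regression model of \S\ref{sec:band}), so that $\mu_N = \Sigma_N \Phi^\T \epsilon$ is a centered Gaussian vector in $\mb R^N$ with covariance $\sigma^2\,\Sigma_N \Phi^\T\Phi\,\Sigma_N$. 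Controlling $\|\mu_N\|_\infty = \max_{1\le j\le N}|\mu_{N,j}|$ therefore reduces to two ingredients: (i) a uniform bound on the marginal variances $\Var(\mu_{N,j}) = \sigma^2(\Sigma_N \Phi^\T\Phi\,\Sigma_N)_{jj}$, and (ii) a Gaussian maximal inequality over the $N$ coordinates. The joint dependence among the $\mu_{N,j}$ is irrelevant for a union-bound argument, so only the marginal variances matter.

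For step (i), I would bound $(\Sigma_N\Phi^\T\Phi\,\Sigma_N)_{jj} \le \lambda_{\max}(\Phi^\T\Phi)\,\lambda_{\max}(\Sigma_N)^2$ and then invoke the two assumptions. Assumption \ref{ass_basis} gives $\lambda_{\max}(\Phi^\T\Phi) \le c_2\,(n/N)$, and since $\Omega_N^{-1}\succeq 0$ implies $\lambda_{\min}(\Omega_N^{-1}+\Phi^\T\Phi)\ge\lambda_{\min}(\Phi^\T\Phi)\ge c_1\,(n/N)$, we also get $\lambda_{\max}(\Sigma_N) = \{\lambda_{\min}(\Omega_N^{-1}+\Phi^\T\Phi)\}^{-1}\le N/(c_1 n)$. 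Combining these, $\Var(\mu_{N,j}) \le \sigma^2 c_2\,(n/N)\{N/(c_1 n)\}^2 = \sigma^2 c_2 N/(c_1^2 n) =: v_N^2$, which is precisely the scale appearing in $\epsilon_N$. Assumption \ref{prior_cov}, namely $\lambda_{\min}(\Omega_N)\succsim N/n$, guarantees that $\lambda_{\max}(\Omega_N^{-1})\lesssim n/N$ is of the same order as $\lambda_{\max}(\Phi^\T\Phi)$, so that the eigenvalues of $\Sigma_N$ are genuinely of order $N/n$ and the bound $v_N^2\asymp N/n$ is sharp rather than a gross overestimate; without this control the prior precision could dominate and the claimed rate would not be the correct one.

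For step (ii), each $\mu_{N,j}$ is $\m N(0,\tau_j^2)$ with $\tau_j^2 \le v_N^2$, so the Gaussian tail bound $\bbP(|\mu_{N,j}|>t)\le 2\exp\{-t^2/(2 v_N^2)\}$ combined with a union bound over $j=1,\ldots,N$ yields $\bbP(\|\mu_N\|_\infty > t) \le 2N\exp\{-t^2/(2 v_N^2)\}$. Choosing $t = \epsilon_N$ of order $v_N (\log N)^{1/2} \asymp (c_2/c_1^2)^{1/2}(N\log N/n)^{1/2}$ makes the right-hand side at most $2N^{-2}$, which is the asserted conclusion; carefully tracking the numerical constant in the exponent fixes the leading factor in $\epsilon_N$. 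The only genuinely substantive step is the variance estimate in (i): once the eigenvalue bounds on $\Phi^\T\Phi$ and $\Sigma_N$ are in place, the maximal inequality is entirely standard. Thus the main obstacle is ensuring that the simultaneous upper and lower eigenvalue control afforded by Assumptions \ref{ass_basis}--\ref{prior_cov} is applied correctly, so as to pin down both the order $\sqrt{N\log N/n}$ and the constant $(c_2/c_1^2)^{1/2}$.
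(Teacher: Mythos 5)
Your proposal is correct in substance and the first half coincides exactly with the paper's argument: both reduce the problem to the Gaussian vector $\mu_N = \Sigma_N\Phi^\T Y$ with covariance $\Sigma_N\Phi^\T\Phi\,\Sigma_N$ under $\theta_0=\mathbf 0$, and both obtain the marginal-variance bound $v_N^2 \le (c_2/c_1^2)(N/n)$ from $\lambda_{\max}(\Phi^\T\Phi)\le c_2(n/N)$ and $\lambda_{\min}(\Omega_N^{-1}+\Phi^\T\Phi)\ge c_1(n/N)$. You are also right that Assumption \ref{prior_cov} is not needed for the upper bound itself (only $\Omega_N^{-1}\succeq 0$ is); the paper invokes it to control $\|\Omega_N^{-1}\|\lesssim n/N$ and to state a matching lower bound on the variances, which is exactly the ``sharpness'' role you assign to it. Where you diverge is the tail step: you use a coordinatewise Gaussian tail bound plus a union bound over the $N$ coordinates, whereas the paper treats $\max_j|\mu_j|$ as a $\sigma_{\max}$-Lipschitz function of the underlying Gaussians, bounds $\bbE\max_j|\mu_j|\le\{2\log(2N)\}^{1/2}\sigma_{\max}$, and applies Gaussian concentration around that mean. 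Both routes give the order $(N\log N/n)^{1/2}$; the union bound is more elementary and needs no maximal-expectation lemma, while the concentration route generalizes more readily when one wants deviations around the expected maximum rather than around zero.

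One small caution on constants: with the leading factor $2(c_2/c_1^2)^{1/2}$ exactly as stated, your union bound yields $2N\exp\{-\epsilon_N^2/(2v_N^2)\} = 2N\cdot N^{-2} = 2N^{-1}$, not $2N^{-2}$; to reach $2N^{-2}$ by this route you need the factor $\sqrt 6$ in place of $2$. You flag that the constant must be tracked, so this is a bookkeeping matter rather than a gap, and the paper's own proof has the analogous wrinkle (it actually takes $\epsilon_N \ge 4(c_2/c_1^2)^{1/2}(N\log N/n)^{1/2}$ to land on $2N^{-2}$, despite the factor $2$ in the statement).
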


\begin{proof}

Under model \eqref{model_tmvn} with true coefficient vector $\theta_0 = {\bf 0}$ we have $Y \mid \theta_0,X \sim \m N({\bf 0}_n,I_n)$. We henceforth write the posterior center $\mu(Y)=\Sigma_N\Phi^\T Y$, also we have $\bbE_{Y\mid X}\{\mu(Y)\} = \mathbf{0}$ and $\Var_{Y\mid X}\{\mu(Y)\}=\Sigma_N\Phi^\T\Phi\Sigma_N$. Further, we denote  $\sigma^2_j = \Var_{Y\mid X}\{\mu_j(Y)\}$ for $j=1,\ldots,N$. For basis matrix $\Phi$ and $\Omega_N$ satisfying Assumption \ref{ass_basis} and Assumption \ref{prior_cov} separately, we have 
\begin{align*}
c_1(n/N) \le \lambda_{\min} (\Omega_N^{-1}+\Phi^T\Phi) \le \lambda_{\max} (\Omega_N^{-1}+\Phi^T\Phi) \le (c_2+D)(n/N).
\end{align*}
Since under Assumption \ref{prior_cov}, the prior covariance matrix $\Omega_N$ satisfies $\|\Omega^{-1}_N\| \le D\, (n/N)$ for some constant $D>0$ and $\lambda_{\min}(\Omega_N^{-1}) \ge0$. Further, we have 
\begin{align}\label{variance_cond} 
\frac{c_1}{(c_1+D)^2}(N/n)\,I_N \le \Sigma_N\Phi^\T\Phi\Sigma_N \le \frac{c_2}{c^2_1}(N/n)\,I_N.
\end{align}
We define $\sigma^2_{\max} =\max_{j\le N} \{\sigma^2_j\}$, then \eqref{variance_cond} implies $\sigma^2_{\max} \le (c_2/c^2_1)(N/n)$. 
It is well known that $\max_{j\le N} |\mu_j|$ is a Lipschitz function of $\mu_j$'s with the Lipschitz constant $\sigma_{\max}$. We can also upper bound the expectation as  
$$\bbE_{Y|X}\Big(\max_{j\le N} |\mu_i|\Big) \le \{2\log(2N)\}^{1/2}\max_{j\le N}\{\sigma_j\} \le M_0 (N\log N/n)^{1/2},$$
where $M_0=2{(c_2/c^2_1)}^{1/2}$.
 
Thus we take $\epsilon_N\ge 2M_0\,(N\log N/n)^{1/2}$, we have 
\begin{align*}
\bbP_{Y\mid X}(\|\mu_N\|_{\infty}>\epsilon_N) &\le \bbP_{Y\mid X}\Big\{\Big|\max_{j\le N}|\mu_j| - \bbE_{Y\mid X}\Big(\max_{i\le N} |\mu_i|\Big)\Big | > \epsilon_N - \bbE_{Y\mid X}\Big(\max_{i\le N} |\mu_i|\Big)  \Big\} \\ \nonumber
& \le\bbP_{Y\mid X}\Big\{\Big|\max_{j\le N}|\mu_j| - \bbE_{Y\mid X}\Big(\max_{i\le N} |\mu_i|\Big) \Big| > \epsilon_N/2 \Big\}\\\nonumber
&\le 2 \exp\{-\epsilon_N^2/(8\,\sigma^2_{\max})\} \le  2\,N^{-2}.
\end{align*}
Then we have established the result.
\end{proof}

\section{Auxiliary results}\label{sec:appd}
\begin{lemma}\label{slepian}
(Slepian's lemma) Let $X, Y$ be centered Gaussian vectors on $\mathbb{R}^d$. Suppose $\bbE X_i^2 = \bbE Y_i^2$ for all $i$, and $\bbE (X_iX_j) \le \bbE (Z_iZ_j)$ for all $i \ne j$. Then, for any $x \in \mb R$,
\begin{align*}
\bbP\Big(\max_{1\le i\le d} X_i \le x\Big) \le \bbP\Big(\max_{1\le i\le d} Y_i \le x\Big).
\end{align*}
\end{lemma}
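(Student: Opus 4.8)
The plan is to obtain Slepian's lemma as a clean special case of the interpolation argument already developed for Lemma~\ref{slepian_gl_main}. Since $\bbP(\max_{i} X_i \le x) = \bbE\{\prod_{i=1}^d \mathbbm{1}_{(-\infty,x]}(X_i)\}$ and likewise for $Y$, it suffices to show $\bbE\{G(X)\} \le \bbE\{G(Y)\}$ for the product indicator $G(u) = \prod_{i=1}^d \mathbbm{1}_{(-\infty,x]}(u_i)$. Exactly as in the proof of Lemma~\ref{slepian_gl_main}, I would first smooth: with $\nu \in C^2(\mb R)$ non-decreasing, $\nu \equiv 0$ on $(-\infty,0]$ and $\nu \equiv 1$ on $[1,\infty)$, set $g_i^\eta(u_i) = 1 - \nu\{\eta(u_i - x)\}$ and $g^\eta(u) = \prod_{i=1}^d g_i^\eta(u_i)$. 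Each factor $g_i^\eta$ is non-increasing and $C^2$, and $g^\eta \to G$ pointwise as $\eta \to \infty$ at every continuity point, so it is enough to prove $\bbE\{g^\eta(X)\} \le \bbE\{g^\eta(Y)\}$ for each fixed $\eta$ and then pass to the limit by bounded convergence.

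Fix $\eta$, take $X$ and $Y$ independent (legitimate since the conclusion depends only on their covariances), and interpolate along $S_t = (1-t^2)^{1/2} X + t Y$, $t \in [0,1]$, so that $S_0 = X$, $S_1 = Y$ and $S_t \sim \m N(\mathbf 0, (1-t^2)\Sigma_X + t^2\Sigma_Y)$. Combining the identity~\eqref{eq:int_parts} with the multivariate Stein identity~\eqref{eq:multi_stein}, both already used in the proof of Lemma~\ref{slepian_gl_main}, gives
\[
\bbE\{g^\eta(Y)\} - \bbE\{g^\eta(X)\} = \int_0^1 t \sum_{i,j=1}^d \{\bbE(Y_iY_j) - \bbE(X_iX_j)\}\, \bbE\Big\{\frac{\partial^2 g^\eta}{\partial x_i \partial x_j}(S_t)\Big\}\, dt.
\]
The diagonal terms $i = j$ drop out because $\bbE X_i^2 = \bbE Y_i^2$, and for $i \ne j$ the hypothesis gives $\bbE(Y_iY_j) - \bbE(X_iX_j) \ge 0$.

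It then remains to check the sign of the mixed partials. The product structure yields, for $i \ne j$,
\[
\frac{\partial^2 g^\eta}{\partial x_i \partial x_j} = (g_i^\eta)'(x_i)\,(g_j^\eta)'(x_j) \prod_{k \ne i,j} g_k^\eta(x_k),
\]
where every $g_k^\eta \ge 0$ and $(g_i^\eta)', (g_j^\eta)' \le 0$ by monotonicity, so the mixed partial is non-negative. Hence every summand in the integral is non-negative, the integral is non-negative, and $\bbE\{g^\eta(X)\} \le \bbE\{g^\eta(Y)\}$; letting $\eta \to \infty$ completes the argument. The only delicate points are the smoothing limit and the sign bookkeeping, but in contrast to Lemma~\ref{slepian_gl_main} there is no two-sided interval constraint on any coordinate: every factor $g_k^\eta$ is monotone, so each $\partial^2 g^\eta/\partial x_i \partial x_j$ has the correct sign automatically, and no careful choice of $\eta$ (as was needed there to control a Gaussian density ratio) is required. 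The main obstacle is therefore essentially notational rather than substantive.
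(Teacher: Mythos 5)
Your proof is correct. Note, however, that the paper does not actually prove this statement: Lemma~\ref{slepian} is listed among the auxiliary results as the classical Slepian inequality, cited from the literature (e.g.\ \cite{vershynin2018high}), with only a remark explaining how it is invoked via $\bbP(X_1\ge 0,\ldots,X_d\ge 0)=\bbP(\max_i X_i\le 0)$ using $X\overset{d}{=}-X$. What you have written is the standard Gaussian-interpolation proof, and it is exactly the specialization of the machinery the paper does spell out for its generalized version, Lemma~\ref{slepian_gl_main}: the same smoothing by $\nu$, the same interpolant $S_t=(1-t^2)^{1/2}X+tY$, the identity \eqref{eq:int_parts}, and the multivariate Stein formula \eqref{eq:multi_stein}. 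Your sign bookkeeping is right --- the diagonal terms vanish by the equal-variance hypothesis, and for $i\ne j$ the mixed partial $(g_i^\eta)'(g_j^\eta)'\prod_{k\ne i,j}g_k^\eta$ is a product of two non-positive factors times non-negative ones, hence non-negative --- and your closing observation is also the right one: because every coordinate carries a one-sided, monotone constraint, no analogue of the delicate choice of $\eta$ needed in the proof of Lemma~\ref{slepian_gl_main} (to control the density ratio arising from the two-sided constraint on the first coordinate) is required here. (Incidentally, the $\bbE(Z_iZ_j)$ in the paper's statement is a typo for $\bbE(Y_iY_j)$, which you have correctly read through.)
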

We use the Slepian's lemma in the following way in the main document. We have, 
\begin{align*}\label{eq:centered}
\bbP(X_1 \ge 0, \ldots, X_d \ge 0) = \bbP\Big(\min_{1\le i \le d} X_i \ge 0\Big) = \bbP\Big(\max_{1 \le i \le d} X_i \le 0\Big),
\end{align*}
where the second equality uses $X \overset{d}= - X$. We use Slepian's inequality to arrive at equation \eqref{eq:Slep} in the main document. 

\begin{lemma}\label{lem:l3}
Let $Z_1, \ldots, Z_N$ be iid $\m N(0, 1)$ random variables. Then we have 
\begin{align}
C_1 \sqrt{2\log N} \le \bbE \max_{i=1,\dots,N} Z_i \le \bbE \max_{i=1,\dots,N} |Z_i| \le \sqrt{2 \log N}.
\end{align}
for some constant $0< C_1<1$. 
\end{lemma}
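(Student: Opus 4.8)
The plan is to treat the three inequalities separately; the middle one is immediate and the two outer bounds call for complementary techniques. The middle inequality is trivial: since $Z_i \le |Z_i|$ pointwise, $\max_i Z_i \le \max_i |Z_i|$, and taking expectations preserves the ordering. For the upper bound I would run the standard moment-generating-function (Chernoff/Jensen) argument. For any $t>0$, convexity of $x \mapsto e^{tx}$ and Jensen's inequality give $\exp(t\,\bbE \max_i |Z_i|) \le \bbE \exp(t \max_i |Z_i|) = \bbE \max_i e^{t|Z_i|} \le \sum_{i=1}^N \bbE e^{t|Z_i|}$, and using $\bbE e^{t|Z|} = 2 e^{t^2/2}\Phi(t) \le 2 e^{t^2/2}$ for a standard normal $Z$ yields $\bbE \max_i |Z_i| \le t^{-1}\log(2N) + t/2$; optimizing at $t=\{2\log(2N)\}^{1/2}$ gives $\{2\log(2N)\}^{1/2}$. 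The same computation applied to $\max_i Z_i$ (no absolute value) uses $\bbE e^{tZ}=e^{t^2/2}$, drops the factor $2$, and produces the sharper $\{2\log N\}^{1/2}$; the stated clean bound $\{2\log N\}^{1/2}$ on $\bbE\max_i|Z_i|$ then holds for $N$ large, the additive $\log 2$ being of lower order, and in any case only an upper bound of this exact order is needed in the sequel.

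For the lower bound I would write $M_N = \max_i Z_i$ and split, for any $t \ge 0$, $\bbE M_N \ge t\,\bbP(M_N \ge t) - \bbE M_N^-$, where $M_N^- = \max(0,-M_N)$. Because $M_N \ge Z_1$ and $x \mapsto x^-$ is non-increasing, $M_N^- \le Z_1^-$, so $\bbE M_N^- \le \bbE Z_1^- = (2\pi)^{-1/2}$, a universal constant. It remains to bound the probability from below at level $t \asymp \{2\log N\}^{1/2}$. By independence, $\bbP(M_N < t) = \Phi(t)^N = \{1-\bar\Phi(t)\}^N \le \exp\{-N\bar\Phi(t)\}$, and the Gaussian tail lower bound $\bar\Phi(t)\ge c\,t^{-1}e^{-t^2/2}$ shows that choosing $t_N = \{\log N\}^{1/2} = 2^{-1/2}\{2\log N\}^{1/2}$ makes $N\bar\Phi(t_N) \gtrsim N^{1/2}/\{\log N\}^{1/2} \to \infty$, so $\bbP(M_N \ge t_N) \to 1$ and exceeds $1/2$ for all $N \ge N_0$. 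Combining, $\bbE M_N \ge \tfrac12 t_N - (2\pi)^{-1/2} \ge C_1 \{2\log N\}^{1/2}$ for $N \ge N_0$ and a suitable $C_1 \in (0,1)$; the finitely many cases $N < N_0$ are absorbed by shrinking $C_1$, noting $\bbE M_1 = 0 = C_1\{2\log 1\}^{1/2}$ and $\bbE M_N > 0$ for $2 \le N < N_0$.

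The delicate part is the lower bound. One must calibrate the truncation level $t_N$ to be large enough to reach the $\{2\log N\}^{1/2}$ scale yet small enough that $\bbP(M_N \ge t_N)$ stays bounded away from $0$, which hinges on the two-sided Gaussian tail estimate $c\,t^{-1}e^{-t^2/2}\le \bar\Phi(t)\le t^{-1}e^{-t^2/2}$. The remaining bookkeeping is to dispose of the negative part by the constant bound $\bbE Z_1^- = (2\pi)^{-1/2}$ and to sweep the small-$N$ regime into $C_1$, so that a single universal constant $C_1 \in (0,1)$ serves uniformly in $N$.
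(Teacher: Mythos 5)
The paper lists Lemma \ref{lem:l3} among the auxiliary results without any proof, so there is no in-paper argument to compare against; your proof is the standard one and is essentially correct. The middle inequality is indeed immediate, and the lower bound checks out in every step: the truncation $\bbE M_N \ge t\,\bbP(M_N \ge t) - \bbE M_N^-$, the domination $\bbE M_N^- \le \bbE Z_1^- = (2\pi)^{-1/2}$, the bound $\Phi(t)^N \le \exp\{-N(1-\Phi(t))\}$ together with the Mill's-ratio lower bound (the paper's Lemma \ref{lem:l4}) at $t_N = (\log N)^{1/2}$, and the absorption of the finitely many small-$N$ cases into $C_1$. The one point worth recording is the upper bound: your Chernoff computation proves $\bbE\max_i|Z_i| \le \{2\log(2N)\}^{1/2}$, and the constant-free form $\{2\log N\}^{1/2}$ claimed in the lemma is in fact false as literally stated at $N=1$ (where $\bbE|Z_1| = (2/\pi)^{1/2} > 0$), so no argument can recover it for all $N$; you are right to flag this rather than paper over it. The discrepancy is harmless downstream --- in the proof of Lemma \ref{upper_lower_small} replacing $(2\log d)^{1/2}$ by $\{2\log(2d)\}^{1/2}$ only changes the final lower bound by a constant factor of order $2^{-\bar{\rho}}$, and the paper itself uses the $\{2\log(2N)\}^{1/2}$ form in the proof of Proposition \ref{prop:mean} --- but the lemma would be more accurately stated with $\{2\log(2N)\}^{1/2}$ on the right-hand side, or with the qualifier that $N$ be sufficiently large.
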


\begin{lemma}\label{lem:l4}
(Mill's ratio bound) Let $X \sim \m N(0,1)$. We have, for $x>0$, that
\begin{align*}
\frac{x}{x^2+1} \mbox{e}^{-x^2/2} \le 1-\Phi(x) \le \frac{1}{x}\mbox{e}^{-x^2/2}, 
\end{align*} 
where $\Phi(\cdot)$ is cumulative distribution function of $X$.
\end{lemma}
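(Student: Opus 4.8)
The statement is the classical Mill's-ratio two-sided bound. The natural object is the Gaussian tail written as $1 - \Phi(x) = \int_x^\infty \varphi(t)\,dt$, where $\varphi(t) = (2\pi)^{-1/2}e^{-t^2/2}$ is the standard normal density; it is this density, carrying the constant $(2\pi)^{-1/2}$, that appears on the right-hand sides, the inequalities being sharp in that normalization. The single algebraic fact I would use throughout is $\varphi'(t) = -t\,\varphi(t)$, so that $t\,\varphi(t) = -\varphi'(t)$ is an exact derivative.

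For the upper bound, I would exploit that $t \ge x > 0$ on the range of integration to insert the harmless factor $t/x \ge 1$:
\begin{align*}
1 - \Phi(x) = \int_x^\infty \varphi(t)\,dt \;\le\; \frac{1}{x}\int_x^\infty t\,\varphi(t)\,dt \;=\; \frac{1}{x}\,\big[-\varphi(t)\big]_x^\infty \;=\; \frac{\varphi(x)}{x}.
\end{align*}

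For the lower bound I would argue by monotonicity rather than by an iterated integration by parts. Set $f(x) = \{1 - \Phi(x)\} - \frac{x}{x^2+1}\,\varphi(x)$; both terms tend to $0$ as $x \to \infty$, so $f(x) \to 0$. Differentiating, using $\varphi' = -t\varphi$ together with the quotient rule applied to $x/(x^2+1)$, the rational prefactors cancel and the numerator collapses to a constant, giving the clean expression $f'(x) = -\,2(x^2+1)^{-2}\,\varphi(x) < 0$ for every $x > 0$. Thus $f$ is strictly decreasing with limit $0$ at $+\infty$, which forces $f(x) > 0$ on $(0,\infty)$; this is precisely the asserted lower bound.

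The one genuinely delicate point is pinning down the lower constant $x/(x^2+1)$ exactly. A purely mechanical double integration by parts yields only the weaker estimate $\{1-\Phi(x)\} \ge (x^2-1)x^{-3}\varphi(x)$, which has the wrong sign for $x < 1$ and so is useless near the origin. The monotonicity route avoids this entirely: everything hinges on the exact cancellation that reduces the numerator of $f'$ to the constant $2$, after which positivity of $f$ is immediate.
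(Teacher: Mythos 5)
The paper states this lemma without proof, treating it as a classical fact, so there is no in-paper argument to compare against; your proof stands on its own and is correct. The upper bound via inserting $t/x \ge 1$ and integrating $t\varphi(t) = -\varphi'(t)$ exactly is the standard route, and your monotonicity argument for the lower bound checks out: with $f(x) = \{1-\Phi(x)\} - \tfrac{x}{x^2+1}\varphi(x)$ one computes $\tfrac{d}{dx}\{\tfrac{x}{x^2+1}\varphi(x)\} = \varphi(x)\{\tfrac{2}{(x^2+1)^2}-1\}$ using $1-2x^2-x^4 = 2-(x^2+1)^2$, whence $f'(x) = -2(x^2+1)^{-2}\varphi(x) < 0$ and $f(x) > f(\infty) = 0$. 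Your remark that the iterated integration by parts only yields $(x^2-1)x^{-3}\varphi(x)$, which degenerates for $x<1$, is also accurate and explains why the monotonicity route is the right one here.

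One point worth making explicit: you are right to insist that the bounds hold with the density $\varphi(x) = (2\pi)^{-1/2}e^{-x^2/2}$ rather than with the bare exponential $e^{-x^2/2}$ as the lemma is literally printed. The printed lower bound is actually false: at $x=1$ it claims $1-\Phi(1) \ge \tfrac12 e^{-1/2} \approx 0.303$, whereas $1-\Phi(1) \approx 0.159$. The upper bound survives the omission of $(2\pi)^{-1/2}$ (it only gets weaker), but the lower bound does not. The paper applies the unnormalized lower bound in the proof of Lemma \ref{upper_lower_small}, so strictly speaking its denominator estimate is off by a factor of $(2\pi)^{-1/2}$ per application; this only perturbs the constant $C'_{\rho_{\min},\rho_{\max}}$ in Theorem \ref{thm:main} and not the rate, but your version of the statement is the one that should be recorded.
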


 
\begin{lemma}\label{band_inv} (Lemma 2.1 in \cite{bickel2012approximating})
Let matrix $A$ be $k$-banded, symmetric, and positive definite. We denote $M = \|A\|$ and $m=1/\|A^{-1}\|$, and for $n \in \mathbb{N}_0$, we define 
\begin{align}\label{inverse}
B_n = \gamma \sum_{j=0}^n (I-\gamma A)^j, 
\end{align}
where $\gamma = 2/(M+m)$. Then $B_n$ is a symmetric positive definite $(nk)$-banded matrix, also, $\|A^{-1} - B_n \| \le \kappa^{n+1}/m$, $\kappa = (M-m)/(M+m)<1$. 
\end{lemma}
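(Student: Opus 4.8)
The plan is to treat $B_n$ as the truncated Neumann series for $A^{-1}$ and to route everything through the spectrum of the single matrix $R := I - \gamma A$. First I would record that, since $A$ is symmetric positive definite, its eigenvalues all lie in $[m, M]$ (recall $m = 1/\|A^{-1}\| = \lambda_{\min}(A)$ and $M = \|A\| = \lambda_{\max}(A)$); with the choice $\gamma = 2/(M+m)$ the eigenvalues of $\gamma A$ lie in $[2m/(M+m),\, 2M/(M+m)]$, so those of $R = I - \gamma A$ lie in $[-\kappa, \kappa]$ with $\kappa = (M-m)/(M+m)$. Because $M \ge m > 0$ we have $\kappa \in [0,1)$, and since $R$ is symmetric its operator norm equals its spectral radius, giving $\|R\| = \kappa < 1$. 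This single spectral fact drives the rest of the argument.

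Next I would establish the structural claims. Symmetry of $B_n$ is immediate, as each $R^j$ is a power of a symmetric matrix. For bandedness, note that $R = I - \gamma A$ is $k$-banded (the identity adds nothing off the diagonal and scaling preserves the band), and the product of a $k_1$-banded and a $k_2$-banded matrix is $(k_1+k_2)$-banded; hence $R^j$ is $(jk)$-banded, and the top term $R^n$ makes $B_n = \gamma\sum_{j=0}^n R^j$ exactly $(nk)$-banded. For positive definiteness I would diagonalize: if $\lambda \in [m,M]$ is an eigenvalue of $A$, the corresponding eigenvalue of $B_n$ is $\gamma \sum_{j=0}^n r^j$ with $r = 1 - \gamma\lambda \in (-\kappa,\kappa)$, and summing the geometric series while using $1 - r = \gamma\lambda$ yields $(1 - r^{n+1})/\lambda$, which is strictly positive since $|r|<1$ and $\lambda>0$. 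Thus every eigenvalue of $B_n$ is positive.

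For the error bound I would exploit that $\|R\| < 1$ makes the Neumann series converge, with $\gamma\sum_{j=0}^\infty R^j = \gamma (I-R)^{-1} = \gamma(\gamma A)^{-1} = A^{-1}$. Subtracting the partial sum and factoring out $R^{n+1}$ gives $A^{-1} - B_n = \gamma\sum_{j=n+1}^\infty R^j = R^{n+1}\,\gamma(I-R)^{-1} = R^{n+1} A^{-1}$, so by submultiplicativity $\|A^{-1} - B_n\| \le \|R\|^{n+1}\,\|A^{-1}\| = \kappa^{n+1}/m$, which is exactly the stated estimate.

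The computations are routine linear algebra, so there is no deep obstacle; the only points that genuinely require care are (i) justifying $\|R\| = \kappa < 1$, which uses both the positive-definiteness of $A$ and the specific relaxation parameter $\gamma = 2/(M+m)$ — a different $\gamma$ need not produce a contraction — and (ii) keeping the band-growth bookkeeping tight so that $B_n$ is $(nk)$-banded rather than merely banded. Everything else follows from the geometric-series identity together with the fact that, for symmetric matrices, the operator norm coincides with the spectral radius.
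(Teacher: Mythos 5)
Your proof is correct. The paper itself gives no proof of this lemma --- it is quoted verbatim as Lemma 2.1 of the cited reference of Bickel and Lindner --- and your truncated Neumann-series argument (spectrum of $R=I-\gamma A$ contained in $[-\kappa,\kappa]$, band growth under products, and $A^{-1}-B_n=R^{n+1}A^{-1}$) is exactly the standard derivation of that result, so there is nothing to add.
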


\bibliographystyle{plainnat}
\bibliography{ref1,tmvn_ref, proton}

\end{document}